\newtheorem{thm}{\bf Theorem}[section]
\newtheorem{prop}[thm]{\sc Proposition}
\newtheorem{lem}[thm]{\sc Lemma}
\theoremstyle{definition}\newtheorem{exa}[thm]{\sc Example}
\theoremstyle{definition}\newtheorem{de}[thm]{\sc Definition}
\theoremstyle{definition}\newtheorem{rem}[thm]{\sc Remark}
\theoremstyle{definition}
\theoremstyle{definition}
\theoremstyle{definition}
\numberwithin{equation}{section}
\numberwithin{figure}{section}
\DeclareMathOperator{\R}{\mathbb R}
\begin{document}

\title[sample]{A characterization of constant $p$-mean curvature surfaces in the Heisenberg group $H_1$}
\author{Hung-Lin Chiu and Hsiao-Fan Liu}
\address{Department of Mathematics, National Tsing Hua University, Hsinchu, Taiwan 300, R.O.C.}
\email{hlchiu@math.nthu.edu.tw}
\address{Department of Mathematics, TamKang University,  New Taipei City 25137, Taiwan, R.O.C.}
\email{hfliu@mail.tku.edu.tw}
\subjclass{1991 Mathematics Subject Classification. Primary: 53A10, 53C42, 53C22, 34A26.}
\keywords{Keywords: Heisenberg group, Pansu sphere, p-Minimal surface, Li\'{e}nard equation, Bernstein theorem}

\begin{abstract}
In Euclidean $3$-space, it is well known that the Sine-Gordon equation was considered in the nineteenth century in the course of investigations of surfaces of constant Gaussian curvature $K=-1$. Such a surface can be constructed from a solution to the Sine-Gordon equation, and vice versa. With this as motivation, employing the fundamental theorem of surfaces in the Heisenberg group $H_{1}$, we show in this paper that the existence of a constant $p$-mean curvature surface (without singular points) is equivalent to the existence of a solution to a nonlinear second-order ODE \eqref{Codeq01}, which is a kind of {\bf Li\'{e}nard equations}.
Therefore, we turn to investigate this equation. It is a surprise that we give a complete set of solutions to \eqref{Codeq01} (or \eqref{lieq01}), and hence use the types of the solution to divide constant $p$-mean curvature surfaces into several classes. As a result, after a kind of normalization, we obtain a representation of constant $p$-mean curvature surfaces and classify further all constant $p$-mean curvature surfaces. In Section \ref{appcon}, we provide an approach to construct $p$-minimal surfaces. It turns out that, in some sense, generic $p$-minimal surfaces can be constructed via this approach. Finally, as a derivation, we recover the Bernstein-type theorem which was first shown in \cite{CHMY} (or see \cite{DGNP, DGNP1}).
\end{abstract}

\maketitle
\tableofcontents
\section{Introduction and main results}
In literature, the Heisenberg group and its sub-Laplacian are active in many fields of analysis and sub-Riemannian geometry, control theory, semiclassical analysis of quantum mechanics, etc.
(cf. \cite{C02,CCG04,H90,H84,LS95}). It also has applications in signal analysis and geometric optics \cite{F92,G00,BP08}. Research on the sub-Riemannian geometry and its analytical consequences, in particular geodesics, has been studied widely and extensively in the past ten years (cf. \cite{CCG04,BGG00, BGG96, CCG05, CM03, G77, K82,K85,KR85}). In this paper, the Heisenberg group is studied as a pseudo-hermitian manifold. Like Euclidean geometry, it is a branch of Klein geometries, and the corresponding Cartan geometry is pseudo-hermitian geometry. 

Recall that the Heisenberg group $H_1$ is the space $\mathbb{R}^3$ associated with the group multiplication
$$(x_1,y_1,z_1)\circ (x_2,y_2,z_2)
=(x_1+x_2,y_1+y_2, z_1+z_2+y_1x_2-x_1y_2),$$
which is a $3$-dimensional Lie group.  The space of all
left invariant vector fields is spanned by the following three vector fields:
$$\mathring{e}_1=\frac{\partial}{\partial x}+y\frac{\partial}{\partial z},~~
\mathring{e}_2=\frac{\partial}{\partial y}-x\frac{\partial}{\partial z}
~~\mbox{ and }~~T=\frac{\partial}{\partial z}.$$
The standard contact bundle on $H_1$ is the subbundle $\xi$ of the tangent bundle
$TH_1$ which is spanned by $\mathring{e}_1$ and $\mathring{e}_2$. It is also defined to be the kernel of
the contact form
$$\Theta=dz+xdy-ydx.$$
The CR structure on $H_1$ is the endomorphism $J:\xi\to \xi$ defined by
$$J(\mathring{e}_1)=\mathring{e}_2~~\mbox{ and }~~J(\mathring{e}_2)=-\mathring{e}_1.$$
One can view $H_1$ as a pseudo-hermitian manifold
with $(J,\Theta)$ as the standard pseudo-hermitian structure. There is a natural associated connection $\nabla$ if we regard all these left invariant vector fields $\mathring{e}_1,\mathring{e}_2$ and $T$ as parallel vector fields.
A naturally associated metric on $H_{1}$ is the adapted metric $g_{\Theta}$, which is defined by $g_{\Theta}=d\Theta(\cdot,J\cdot)+\Theta^{2}$. It is equivalent to define the metric by regarding $\mathring{e}_1,\mathring{e}_2$ and $T$ as an orthonormal frame field. We sometimes use $<\cdot,\cdot>$ to denote the adapted metric. In this paper, we use the adapted metric to measure the lengths and angles of vectors, and so on.

A pseudo-hermitian transformation (or a Heisenberg rigid motion) in $H_1$ is a diffeomorphism in $H_1$ which preserves the
standard pseudo-hermitian structure $(J,\Theta)$. We let $PSH(1)$ be the group of Heisenberg rigid motions, that is, the group of
all pseudo-hermitian transformations in $H_1$. For details of this group, we refer readers to \cite{CHMY12,CL}, in which the fundamental theorem in the Heisenberg groups has been studied. We say that two surfaces are congruent if they differ by an action of a Heisenberg rigid motion.

The concept of minimal surfaces or constant mean curvature surfaces plays an important role in differential geometry to study the basic properties of manifolds. There is an analogous concept in pseudo-hermitian manifolds, which are called $p$-minimal surfaces. In this paper, we focus on studying such kinds of surfaces in the Heisenberg group $H_{1}$. Throughout this article, all objects we discuss are assumed to be $C^{\infty}$ smooth, unless we specify otherwise. Suppose $\Sigma$ is a surface in the Heisenberg group $H_{1}$. There is a one-form $I$ on $\Sigma$ which is induced from the adapted metric $g_{\Theta}$. This induced metric is defined on the whole surface $\Sigma$ and is called the first fundamental form of $\Sigma$. The intersection $T\Sigma\cap\xi$ is integrated to be a singular foliation on $\Sigma$ called the characteristic foliation. Each leaf is called a characteristic curve. A point $p\in\Sigma$ is called a singular point if at which the tangent plane $T_{p}\Sigma$ coincides with the contact plane $\xi_{p}$; otherwise, $p$ is called a regular (or non-singular) point. Generically, a point $p\in\Sigma$ is a regular point, and the set of all regular points is called the regular part of $\Sigma$. On the regular part, we can choose a unit vector field $e_{1}$ such that $e_{1}$ defines the characteristic foliation. The vector $e_{1}$ is determined up to a sign. Let $e_{2}=Je_{1}$. Then $\{e_{1},e_{2}\}$ forms an orthonormal frame field of the contact bundle $\xi$. We usually call the vector field $e_{2}$ a horizontal vector field. Then the $p$-mean curvature $H$ of the surface $\Sigma$ is defined by
\begin{equation}
\nabla_{e_{1}}e_{2}=-He_{1}.
\end{equation}
The $p$-mean curvature $H$ is only defined on the regular part of $\Sigma$. There are two more equivalent ways to define the $p$-mean curvature from the point of view of variation and a level surface (see \cite{CHMY,M}). We remark that this notion of mean curvature was proposed by J.-H. Cheng, J.-F. Hwang, A. Malchiodi, and P. Yang from the geometric point of view to generalize the one introduced first by S. Pauls in $H_{1}$ for graphs over the $xy$-plane \cite{P}. Also, in \cite{RR}, M. Ritor\'{e} and C. Rosales exposed another method to compute the mean curvature of a hypersurface. If $H=0$ on the whole regular part, we call the surface a $p$-minimal surface. The $p$-mean curvature is the line curvature of a characteristic curve, and hence the characteristic curves are straight lines (for the detail, see \cite{CHMY}). There also exists a function $\alpha$ defined on the regular part such that $\alpha e_{2}+T$ is tangent to the surface $\Sigma$. We call this function the $\alpha$-function of $\Sigma$. It is uniquely determined up to a sign, which depends on the choice of the characteristic direction $e_{1}$. Define $\hat{e}_{1}=e_{1}$ and $\hat{e}_{2}=\frac{\alpha e_{2}+T}{\sqrt{1+\alpha^2}}$, then $\{\hat{e}_{1},\hat{e}_{2}\}$ forms an orthonormal frame field of the tangent bundle $T\Sigma$. Notice that $\hat{e}_{2}$ is uniquely determined and independent of the choice of the characteristic direction $e_{1}$. In \cite{CL}, it was shown that these four invariants, 
\[I,e_{1},\alpha,H,\]
form a complete set of invariants for surfaces in $H_{1}$. We remark that all the results provided in \cite{CL} still hold in the $C^2$-category. For each regular point $p$, we can choose suitable coordinates around $p$ to study the local geometry of such surfaces. There always exists a coordinate system $(x,y)$ of $p$ such that 
\[e_{1}=\frac{\partial}{\partial x}.\]
We call such coordinates $(x,y)$ a {\bf compatible coordinate system}. It is determined up to a transformation in \eqref{cortra}. Notice that the compatible coordinate systems are dependent on the characteristic direction $e_{1}$.

Let $\Sigma\subset H_{1}$ be a constant $p$-mean curvature surface with $H=c$. In terms of a compatible coordinate system $(U;x,y)$, the $\alpha$-function satisfies the following equation
\begin{equation}\label{Codeq01}
\alpha_{xx}+6\alpha\alpha_{x}+4\alpha^{3}+c^{2}\alpha=0,
\end{equation}
which first appeared as a Codazzi-like equation $(1.12)$ in \cite{CHMY12} with $D= -1/{\alpha}$.

It is a nonlinear ordinary differential equation and is an example of the so-called {\bf Li\'{e}nard equations}, named after the French physicist Alfred-Marie Li\'{e}nard. The Li\'{e}nard equations were intensely studied as they can be used to model oscillating circuits. Conversely, in this paper, we show that if there exists a solution $\alpha(x,y)$ to the {\bf Li\'{e}nard equation} \eqref{Codeq01}, we are able to construct a constant $p$-mean curvature surface with $H=c$ and this given solution $\alpha$ as its $\alpha$-function. This result is summarized as Theorem \ref{main021}. One motivation of this theorem comes from the famous Sine-Gordon Equation (SGE), 
\[ u_{tt}-u_{xx} = \sin(u)\cos(u),\] which is considerably older than the Korteweg de Vries Equation (KdV). It was discovered in the late eighteenth century in the study of {\it pseudospherical} surfaces, that is, surfaces of Gaussian curvature $K=-1$ immersed in $\R^3$, and it was intensively studied for this reason. It arises from the Gauss-Codazzi equations for pseudospherical surfaces in $\R^3$ and is known as an integrable equation \cite{AKNS}. In addition, it can also be viewed as a continuum limit \cite{RM94}. Its solutions and solitons have been widely discussed by the Inverse Scattering Transform and other approaches. 

There is a bijective relation between solutions $u$ of the SGE with $\Im(u) \subset (0,\frac{\pi}{2})$ and the classes of pseudospherical surfaces in $\R^3$ up to rigid motion. If $u : \R^2 \rightarrow \R$ is a solution such that $\sin u\cos u$ is zero at a point $u_0$, then the immersed pseudoshperical surface has cusp singularities. For example, the pseudospherical surfaces corresponding to the 1-soliton solutions of SGE are the so-called Dini surfaces and have a helix of singularities.

The study of line congruences gives rise to the concept of B\"acklund transformations. A line congruence $L: M \rightarrow M^*$ is called a B\"acklund transformation with the constant angle $\theta$ between the normal to $M$ at $p$ and the normal to $M^*$ at $p^*= L(p)$ and the distance between $p$ and $p^*$ is $\sin\theta$ for all $p \in M$. The classical B\"acklund transformation for the Sine-Gordon equation was constructed in the nineteenth century by Swedish differential geometer Albert B\"acklund by means of a geometric construction \cite{B82,B83,B13}. We then are motivated to present analogous theorems for Heisenberg groups.

\begin{thm}\label{main021}
The existence of a constant $p$-mean curvature surface $($without singular points$)$ in $H_{1}$ is equivalent to the existence of a solution to the equation \eqref{Codeq01}. More precisely, let $\alpha(x,y)$ and $H(x,y)$ be two arbitrary smooth functions on a coordinate neighborhood $(U;x,y)\subset\R^{2}$.  If they satisfy the following integrability condition
\begin{equation}\label{intcon01}
\left(h(y)-\int H\alpha \left(e^{\int 2\alpha dx}\right) dx\right)H_{x}+e^{k(y)}H_{y}=\left(e^{\int 2\alpha dx}\right)(\alpha_{xx}+6\alpha\alpha_{x}+4\alpha^{3}+\alpha H^{2}),
\end{equation}
for some functions $k(y)$ and $h(y)$ in the variable $y$, then there exists an embedding $X: U\rightarrow H_{1}$ $($provided that $U$ is small enough$)$ such that the surface $\Sigma=X(U)$ has $H$ and $\alpha$ as its $p$-mean curvature and $\alpha$-function, respectively, and $\hat{e}_{1}=\frac{\partial}{\partial x}$,\ $\hat{e}_{2}=a(x,y)\frac{\partial}{\partial x}+b(x,y)\frac{\partial}{\partial y}$ with $a$ and $b$ defined as \eqref{foforb} and \eqref{fofora}. In addition, such embeddings are unique, up to a Heisenberg rigid motion.

In particular, when $H=c$ is a constant, the integrability condition \eqref{intcon01} reads \eqref{Codeq01} for each pair of functions $k(y)$ and $h(y)$, and the resulting surface $\Sigma=X(U)$ is a {\bf constant} $p$-mean curvature surface with $H=c$, the given function $\alpha(x,y)$ as its $\alpha$-function, and $\hat{e}_{1}=\frac{\partial}{\partial x}$.
\end{thm}

In this article, we sometimes call the {\bf Li\'{e}nard equation} \eqref{Codeq01} as the {\bf Codazzi-like} equation from the geometrical point of view \cite{CHMY12,CL}. We would also like to specify that a graph $(x,y,u(x,y))$ is $p$-minimal if it satisfies the $p$-minimal equation (see \cite{CHMY})
\begin{equation}\label{pmge}
(u_{y}+x)^{2}u_{xx}-2(u_{y}+x)(u_{x}-y)u_{xy}+(u_{x}-y)^{2}u_{yy}=0.
\end{equation} 
This is a degenerate hyperbolic and elliptic partial differential equation. 

Theorem \ref{main021} is the fundamental theorem for surfaces in $H_{1}$. After we make a detailed investigation of the original version of the integrability conditions \eqref{intcon}, \eqref{intcon01} is more useful than the previous one in some sense (for the origin version, we refer readers to \cite{CL} or Theorem \ref{fudthm} of this paper). It also appeared as Theorem H in \cite{CHMY12} with a slightly different formulation as the authors of \cite{CHMY12} did not prescribe the metric.

Theorem \ref{main021} follows from our detailed study on the integrability condition (see \eqref{intcon}) of the fundamental theorem (Theorem \ref{fudthm}) for surfaces in $H_{1}$. Actually, if we let $\hat{\omega}_{1}{}^{2}$ be the Levi-Civita connection associated to the induced metric with respect to the orthonormal frame field $\{\hat{e}_{1},\hat{e}_{2}\}$, as specified in Theorem \ref{main021}, 
then \eqref{intcon01} means that $\hat{\omega}_{1}{}^{2}, \alpha$, and $H$ satisfy the integrability condition \eqref{intcon}. This is equivalent to saying that $a, b, \alpha$ and $H$ satisfy the integrability condition \eqref{intcon1} (see Subsection \ref{neveinco}), which is another version of \eqref{intcon}. We then have Theorem \ref{main021}.

Given a function $\alpha(x,y)$ in a coordinate neighborhood $(U;x,y)\subset\R^2$ which satisfies the {\bf Codazzi-like} equation \eqref{Codeq01}, we are able to construct a family of constant $p$-mean curvature surfaces. Therefore, it suggests that a good strategy is to investigate constant $p$-mean curvature surfaces by means of the {\bf Codazzi-like} equation \eqref{Codeq01}; in particular, $p$-minimal surfaces. In this paper, we will focus on the theory of $p$-minimal surfaces. Strategically, we first study the equation \eqref{Codeq01} with $c=0$, that is,
\begin{equation}\label{lieq01}
\alpha_{xx}+6\alpha\alpha_{x}+4\alpha^{3}=0.
\end{equation}
For nonlinear ordinary differential equations, it is known that it is rarely possible to find explicit solutions in closed form, even in power series. Fortunately, we indeed obtain a complete set of solutions to \eqref{lieq01} in a simple form (see Section \ref{sofsotl}), stated as follows.

\begin{thm}\label{main031}
Besides the following three special solutions to \eqref{lieq01},  
\begin{equation*}
\alpha(x)=0,\ \frac{1}{x+c_{1}},\ \frac{1}{2(x+c_{1})},
\end{equation*}
we have the general solution to \eqref{lieq01} of the form
\begin{equation}
\alpha(x)=\frac{x+c_{1}}{(x+c_{1})^{2}+c_{2}},
\end{equation}
which depends on two constants $c_{1}$ and $c_{2}$, and $c_{2}\neq 0$.
\end{thm}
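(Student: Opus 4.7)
The plan is to produce a first integral that reduces \eqref{lieq01} to an autonomous second-order ODE of the form $u''=C/u^{3}$, and then to integrate this by the standard energy method. The key observation is the identity
\begin{equation*}
\alpha''+6\alpha\alpha'+4\alpha^{3} = (\alpha'+\alpha^{2})' + 4\alpha(\alpha'+\alpha^{2}),
\end{equation*}
so setting $\beta:=\alpha'+\alpha^{2}$ turns \eqref{lieq01} into the linear first-order equation $\beta'=-4\alpha\beta$. Solving this yields $\beta = C\exp\bigl(-4\!\int\alpha\,dx\bigr)$ for some integration constant $C$, and by uniqueness for linear ODEs either $\beta\equiv 0$ or $\beta$ never vanishes.

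When $\beta\equiv 0$, the function $\alpha$ satisfies the Riccati equation $\alpha'+\alpha^{2}=0$, whose solutions are exactly $\alpha\equiv 0$ and $\alpha=1/(x+c_{1})$; these account for two of the three special solutions. When $\beta$ is nowhere vanishing (so $C\neq 0$), I introduce the logarithmic substitution $\alpha=u'/u$ with $u:=\exp\bigl(\int\alpha\,dx\bigr)>0$. A direct computation gives $\alpha'+\alpha^{2}=u''/u$ and $\exp\bigl(-4\!\int\alpha\,dx\bigr)=u^{-4}$, so the equation for $\beta$ collapses to
\begin{equation*}
u''=\frac{C}{u^{3}}.
\end{equation*}

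To solve this autonomous ODE, multiply by $u'$ and integrate once to obtain $(u')^{2}=D-C/u^{2}$ for a constant $D$. Setting $v:=u^{2}>0$ converts this into $(v')^{2}=4(Dv-C)$. When $D\neq 0$, differentiating this relation gives $v''=2D$, so $v$ is a quadratic polynomial in $x$, and matching coefficients forces $v=D(x+c_{1})^{2}+C/D$. Dividing numerator and denominator of $\alpha=v'/(2v)$ by $D$ then yields
\begin{equation*}
\alpha=\frac{x+c_{1}}{(x+c_{1})^{2}+c_{2}},\qquad c_{2}:=\frac{C}{D^{2}}\neq 0,
\end{equation*}
which is the claimed general solution. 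When $D=0$, the relation $(v')^{2}=-4C$ forces $C<0$ and $v=\pm 2\sqrt{-C}\,(x+c_{1})$, producing the remaining special solution $\alpha=v'/(2v)=1/(2(x+c_{1}))$.

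The main obstacle is the initial reformulation: recognizing that the Li\'enard equation can be written as $(\alpha'+\alpha^{2})'+4\alpha(\alpha'+\alpha^{2})=0$, so that $\beta=\alpha'+\alpha^{2}$ emerges as a first integral with explicit integrating factor $\exp\bigl(4\!\int\alpha\,dx\bigr)$. Once this structural identity is spotted, the rest is a short cascade of the logarithmic substitution $\alpha=u'/u$, the energy integration of $u''=C/u^{3}$, and two elementary separable steps in $v=u^{2}$; the completeness of the list of solutions is automatic because at every stage we argued that $\alpha$ \emph{must} take the stated form.
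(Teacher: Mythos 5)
Your proof is correct, and it takes a genuinely different route from the paper. The paper passes to the phase plane, rewrites \eqref{lieq01} as an Abel equation of the second kind $v\,dv/d\alpha+6\alpha v+4\alpha^{3}=0$, inverts via $u=1/v$, invokes Chiellini's integrability condition with the substitution $\omega=\tfrac{2}{3}\alpha^{2}u$, integrates a separable equation by partial fractions to get the implicit relation $\omega(3\omega+2)/(3\omega+1)^{2}=C\alpha^{2}$, and then solves the resulting first-order ODE by a second separation with trigonometric substitutions, recovering the special solutions as the excluded cases $\omega=0,-\tfrac13,-\tfrac23$. You instead exploit the exact factorization $\alpha''+6\alpha\alpha'+4\alpha^{3}=\beta'+4\alpha\beta$ with $\beta=\alpha'+\alpha^{2}$, so that $\beta=Ce^{-4\int\alpha\,dx}$ either vanishes identically (giving $\alpha=0$ and $\alpha=1/(x+c_{1})$) or never vanishes; the Cole--Hopf-type substitution $u=e^{\int\alpha\,dx}$ then collapses the equation to the Ermakov-type ODE $u''=C/u^{3}$, whose energy integral in the variable $v=u^{2}$ shows that $v$ is a polynomial of degree at most two in $x$, from which $\alpha=v'/(2v)$ is read off (with $D=0$ producing the remaining special solution $1/(2(x+c_{1}))$). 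Your argument is shorter, avoids the partial-fraction and square-root manipulations entirely, and yields the structurally useful byproduct that $e^{\int 2\alpha\,dx}$ --- precisely the quantity appearing in the metric formulas \eqref{foforb} and \eqref{fofora} --- equals $D(x+c_{1})^{2}+c_{2}D$ (or a linear, or constant, function of $x$) on a $p$-minimal surface. One small point to tighten: in the step ``differentiating $(v')^{2}=4(Dv-C)$ gives $v''=2D$'' you are dividing by $v'$; you should note that $v'$ cannot vanish on an open subinterval (that would force $\alpha\equiv 0$ there, contradicting $\beta\neq 0$), so $v''=2D$ holds on a dense set and hence everywhere by continuity. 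With that remark the proof is complete.
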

In Subsection \ref{claofpmi}, we are able to use the types of the solutions in Theorem \ref{main031} to divide the $p$-minimal surfaces into several classes, which are {\bf vertical}, {\bf special type I}, {\bf special type II} and {\bf general type} (see Definition \ref{cladef} and \ref{cladef1}). Each type of these $p$-minimal surfaces is open and contains no singular points. Generically, each $p$-minimal surface is a union of these types of surfaces, and "type" can be shown to be invariant under an action of a Heisenberg rigid motion. Now for each type, whether it is special or general, if a function $\alpha$ is given, 
then Proposition \ref{indmetin1}, \ref{indmetin2} and \ref{indmetin3} express the formula for the induced metric $a,b$ (see \eqref{foab42}, \eqref{foab422} and \eqref{foab423}), which is a representation of $I$, on the $p$-minimal surfaces with this given $\alpha$ as $\alpha$-function. From these formulae, we see that such constructed $p$-minimal surfaces depend upon two functions $k(y)$ and $h(y)$ for each given $\alpha$. 
 Nonetheless, in Section \ref{othocs}, we proceed to normalize these invariants to the following normal forms in terms of an orthogonal coordinate system $(x,y)$, which is a coordinate system such that $a=0$. Such a coordinate system is determined up to a translation on $(x,y)$, thus we call it a {\bf normal coordinate system}. 
 \begin{thm}\label{main041}
 Let $\Sigma\subset H_{1}$ be a $p$-minimal surface. Then, in terms of a normal coordinate system $(x,y)$, we can normalize the $\alpha$-function and the induced metric $a,b$ to be the following {\bf normal forms}: 
\begin{enumerate}
\item $\alpha=\frac{1}{x+\zeta_{1}(y)}$, and $a=0, b=\frac{\alpha^2}{\sqrt{1+\alpha^{2}}}$ if $\Sigma$ is of {\bf special type I},
\item $\alpha=\frac{1}{2x+\zeta_{1}(y)}$, and $a=0, b=\frac{|\alpha|}{\sqrt{1+\alpha^{2}}}$ if $\Sigma$ is of {\bf special type II},
\item $\alpha=\frac{x+\zeta_{1}(y)}{(x+\zeta_{1}(y))^{2}+\zeta_{2}(y)}$, and $a=0, b=\frac{|\alpha|}{|x+\zeta_{1}(y)|\sqrt{1+\alpha^{2}}}$ if $\Sigma$ is of {\bf general type},
\end{enumerate} 
for some functions $\zeta_{1}(y)$ and $\zeta_{2}(y)$, in which $\zeta_{2}(y)$ is unique up to a translation on $y$, and 
$\zeta_{1}(y)$ is unique up to a translation on $y$ as well as its image.
 \end{thm}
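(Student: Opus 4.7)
The plan is to start from a compatible coordinate system $(x,y)$ on the regular part of $\Sigma$, in which $\hat{e}_{1}=\partial/\partial x$. By Theorem \ref{main031}, viewed with $y$ as a parameter, the $\alpha$-function of $\Sigma$ takes one of the three non-vertical forms listed there, with the integration constants $c_{1},c_{2}$ replaced by smooth functions $\zeta_{1}(y),\zeta_{2}(y)$. Propositions \ref{indmetin1}, \ref{indmetin2}, \ref{indmetin3} then give explicit expressions for the induced-metric coefficients $a(x,y),b(x,y)$ via \eqref{foab42}--\eqref{foab423} in terms of $\alpha$ and two auxiliary functions $h(y),k(y)$. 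The remaining task is to perform a further change of coordinates bringing these invariants to their canonical form.

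The only coordinate changes compatible with $\hat{e}_{1}=\partial/\partial x$ are, by \eqref{cortra}, of the form $\tilde{x}=\pm x+f(y)$, $\tilde{y}=g(y)$ with $g'\neq 0$; a short computation yields the transformation rule $\tilde{a}=\pm\bigl(a+bf'(y)\bigr)$ and $\tilde{b}=bg'(y)$. To enforce $\tilde{a}=0$ I would take $f$ to be an antiderivative of $-a/b$, which requires that the quotient $a/b$ be a function of $y$ only. I would verify this case by case by inspecting \eqref{foab42}--\eqref{foab423}: since $\alpha$ is a specific rational function of $x$ and $a,b$ are built algebraically from $\alpha$ together with $h(y),k(y)$, the $x$-dependence should cancel in $a/b$, leaving an ODE in $y$ that is solvable on a possibly smaller neighborhood.

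After $a$ has been killed, I would use the residual freedom in $g$ to absorb the remaining $y$-dependent factor in $\tilde{b}$, reducing it to the stated closed forms $\alpha^{2}/\sqrt{1+\alpha^{2}}$, $|\alpha|/\sqrt{1+\alpha^{2}}$, or $|\alpha|/(|x+\zeta_{1}|\sqrt{1+\alpha^{2}})$. At this point the only surviving coordinate freedom is the rigid translation $(x,y)\mapsto(x+c',y+c)$, which justifies the name \emph{normal coordinate system}. The claimed uniqueness follows by tracking the effect of these two translations on $\zeta_{1}$ and $\zeta_{2}$: a $y$-translation shifts the arguments of both, while an $x$-translation shifts the image of $\zeta_{1}$ (by $c'$ or $2c'$, depending on the type) and leaves $\zeta_{2}$ untouched.

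The main obstacle is the algebraic verification that $a/b$ depends only on $y$ in each of the three cases; this is what allows a single function $f(y)$ to kill $a$ without producing an over-determined system. Once this compatibility is established, the remainder of the proof is a bookkeeping exercise combining the explicit formulas for $a,b$, the reduction of $\tilde{b}$ via $g$, and the two surviving translation freedoms.
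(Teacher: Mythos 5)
Your proposal follows essentially the same route as the paper: kill $a$ by an $x$-shift $f(y)$ with $f'=-a/b$ (which works because, with $H=0$, formulas \eqref{foforb} and \eqref{fofora} give $a/b=h(y)/e^{k(y)}$, a function of $y$ alone), then use the reparametrization $g$ of $y$ to normalize the factor $e^{k(y)}$ in $b$, and finally track the residual translation freedom to get the stated uniqueness of $\zeta_{1}$ and $\zeta_{2}$. The only cosmetic difference is your extra $\pm$ in the $x$-transformation, which the paper excludes by fixing the characteristic direction $\hat{e}_{1}=\partial/\partial x$; this does not affect the argument.
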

 
 Therefore, $\zeta_{1}(y)$ constitutes a complete set of invariants for $p$-minimal surfaces of special type I (or of special type II), whereas both $\zeta_{1}(y)$ and 
$\zeta_{2}(y)$ constitute a complete set of invariants for $p$-minimal surfaces of general type. We hence give a {\bf representation} for $p$-minimal surfaces (see Section \ref{othocs}). From Theorem \ref{main041}, together with Theorem \ref{main021}, the following version of the fundamental theorem holds immediately for $p$-minimal surfaces in $H_{1}$.

Analogs for constant $p$-mean curvature surfaces are also
 included in Subsection \ref{norconstp}. We first derive the general form of $\alpha$ to the Codazzi-like equation \eqref{Codeq01} for $c\neq 0$ ( stated as Theorem \ref{consta}), and Proposition \ref{metricform} provides the formula for the induced metric $a$ and $b$. 
\begin{thm}\label{consta}
Besides the trivial solution $\alpha(x)=0$, the Codazzi-like equation \eqref{Codeq01} for nonzero $c$  
has the special solutions 
\begin{equation*}
-\frac{1}{2}|c|\tan(|c|x+|c|K_1),\ \alpha(x)=\frac{1}{2}|c|\tan(-\frac{|c|}{2}x+\frac{|c|}{2}K_2),\quad K_1,K_2\in \R,
\end{equation*}
and the general solution of the form
\begin{equation}
\alpha(x)=\lambda\frac{\sin{(2\lambda x+c_{1}})}{c_{2}-\cos{(2\lambda x+c_{1})}},
\end{equation}
which depends on two constants $c_1$ and $c_2$, and $c=2\lambda$. 
\end{thm}
Since each function of $\sin(x)$ and $\cos(x)$ differs by a sign if we replace the angle $x$ by $x+\frac{\pi}{2\lambda}$, we can assume, without loss of generality, that $c_{2}\geq 0$. We use $c_{2}$ to divide constant $p$-mean curvature surfaces into two classes. One class is those surfaces with $c_{2}>1$, and the other is that with $0\leq c_{2}\leq 1$. It is worth our attention that, for surfaces with $c_{2}>1$, the denominator of the formula for $\alpha$ is never zero. That means that the surfaces won't extend to one with singular points. Moreover, if the surface is closed, it will be a closed constant $p$-mean curvature surface without singularity, which means this surface is of type of torus. It indicates that it is possible to find a Wente-type torus in the class of these surfaces. We can also normalize these invariants to the normal forms. However, Proposition \ref{metricform} implies that the normalization for constant $p$-mean curvature surfaces needs to be modified since $a$ must be nonzero. Therefore, the transformation laws help us obtain the normal coordinates $(\tilde x, \tilde y)$ in Subsection \ref{norconstp1} and hence we have the complete set of invariants $\{\zeta_1(y),\zeta_2(y)\}$ for constant $p$-mean curvature surfaces (see Theorem \ref{cominv}).

\begin{thm}\label{main051}
Given two arbitrary functions $\zeta_{1}(y)$ and $\zeta_{2}(y)$ defined on $(c,d)\subset\R$, and $\zeta_{2}(y)\neq 0$ for all $y\in(c,d)$ (note that $(c,d)$ may be the whole line $\R$), then 
\begin{enumerate}
\item there exist an open set $U\subset(e,f)\times(c,d)\subset(\R^2;x,y)$, for some $(e,f)\subset\R$, and an embedding $X:U\rightarrow H_{1}$ such that $\Sigma=X(U)$ is a $p$-minimal surface of {\bf special type I} $($or of {\bf special type II}$)$ with $(x,y)$ as a normal coordinate system and $\zeta_{1}(y)$ as its $\zeta_{1}$-invariant; 
\item there exist an open set $U\subset(e,f)\times(c,d)\subset(\R^2;x,y)$, for some $(e,f)\subset\R$, and an embedding $X:U\rightarrow H_{1}$ such that $\Sigma=X(U)$ is a $p$-minimal surface of {\bf general type} with $(x,y)$ as a normal coordinate system and $\zeta_{1}(y)$ and $\zeta_{2}(y)$ as its $\zeta_{1}$- and $\zeta_{2}$-invariants. 
\end{enumerate}
Moreover, such embeddings in $(1)$ and $(2)$ are unique, up to a Heisenberg rigid motion.
\end{thm}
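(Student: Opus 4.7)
The plan is to combine Theorem \ref{main041}, which identifies $\zeta_{1}(y)$ and $\zeta_{2}(y)$ as a complete set of invariants in a normal coordinate system, with the fundamental Theorem \ref{main011} for constant $p$-mean curvature surfaces specialized to $H=0$, which constructs an embedding once an $\alpha$-function solving the Codazzi-like equation \eqref{lieq01} is prescribed. Theorem \ref{main051} is effectively the assertion that the normal-form data of Theorem \ref{main041} is realizable by a suitable choice of the two auxiliary functions $k(y), h(y)$ appearing in Theorem \ref{main011}, and that the resulting surface is unique up to a Heisenberg rigid motion.

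For existence, given $\zeta_{1}(y)$ on $(c,d)$ (and, in case $(2)$, $\zeta_{2}(y)$ with $\zeta_{2}\neq 0$), I would first define $\alpha(x,y)$ on a rectangle $U=(e,f)\times(c,d)$ by the corresponding normal form of Theorem \ref{main041}:
\begin{equation*}
\alpha(x,y)=\frac{1}{x+\zeta_{1}(y)},\quad \frac{1}{2x+\zeta_{1}(y)},\quad\text{or}\quad \frac{x+\zeta_{1}(y)}{(x+\zeta_{1}(y))^{2}+\zeta_{2}(y)}.
\end{equation*}
I pick $(e,f)$ so that the relevant denominator never vanishes on $U$, which is possible locally because each singular locus is a smooth graph over $y$. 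By Theorem \ref{main031}, for each fixed $y$ these are precisely the solutions of the Li\'enard equation \eqref{lieq01}, so $\alpha(x,y)$ satisfies \eqref{Codeq01} with $c=0$ throughout $U$. Theorem \ref{main011} then furnishes, for any choice of $k(y), h(y)$, an embedding $X\colon U\to H_{1}$ realizing $\alpha$ as the $\alpha$-function and producing induced metric coefficients $a,b$ via the formulas \eqref{foforb} and \eqref{fofora}. It remains to select $k, h$ so that $a\equiv 0$ on $U$ (which is exactly what makes $(x,y)$ a normal coordinate system) and so that the resulting $b$ matches the prescribed normal form. This is a direct case-by-case computation: substitute the chosen $\alpha$ into \eqref{foforb} and \eqref{fofora}, impose $a=0$, solve for $k,h$, and check that the remaining formula reproduces $b$ as in Theorem \ref{main041}.

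For uniqueness, suppose $\Sigma_{1}, \Sigma_{2}\subset H_{1}$ are two $p$-minimal surfaces of the same given type with identical invariants. I would equip each with its own normal coordinate system and exploit the residual translational freedom in such coordinates, as recorded in Theorem \ref{main041}, to align them so that the $\alpha$-functions and the induced metric coefficients $(a,b)$ of $\Sigma_{1}$ and $\Sigma_{2}$ coincide on a common parameter domain. The uniqueness clause of Theorem \ref{main011} then delivers a Heisenberg rigid motion mapping $\Sigma_{1}$ onto $\Sigma_{2}$, which completes the argument.

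The main technical hurdle will be the algebra in the existence step: one must verify that the system $a=0$, $b=b_{\text{prescribed}}$ can genuinely be solved for $k(y), h(y)$ from \eqref{foforb}, \eqref{fofora}, with the additional subtlety in the general-type case that the prescribed $b=|\alpha|/(|x+\zeta_{1}|\sqrt{1+\alpha^{2}})$ involves absolute values and requires a sign and smoothness check across zeros of $x+\zeta_{1}$. A secondary but minor concern is choosing the product domain $U=(e,f)\times(c,d)$ on which $\alpha$ is smooth and the Heisenberg map is a genuine embedding; this is purely local and is handled by shrinking $(e,f)$ as permitted in Theorem \ref{main011}.
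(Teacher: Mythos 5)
Your proposal is correct and follows essentially the same route as the paper: prescribe $\alpha$ in its normal form on a rectangle avoiding the singular locus, realize $a=0$ and the normal-form $b$ (which in the paper amounts to taking $h\equiv 0$, $k\equiv 0$ in \eqref{fofora} and \eqref{foforb}), verify the integrability condition, and invoke the fundamental theorem for existence and uniqueness. The only cosmetic difference is that the paper writes down $a,b$ directly and checks \eqref{intcon2}, rather than passing through the $k,h$ parametrization, but these are the same computation.
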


Due to Theorem \ref{main051}, for each pair of function $\zeta_{1}(y)$ and $\zeta_{2}(y)$, we define in Subsection \ref{mami} eight maximal $p$-minimal surfaces in the sense specified in Theorem \ref{main061}. Roughly speaking, it says that any connected $p$-minimal surface with type is a part of one of these eight classes. One notices that generically, a $p$-minimal surface is a union of those $p$-minimal surfaces with type.

\begin{thm}\label{main061}
Given two arbitrary functions $\zeta_{1}(y)$ and $\zeta_{2}(y)$ defined on $(c,d)\subset\R$, and $\zeta_{2}(y)\neq 0$ for all $y\in(c,d)$ $($note that $(c,d)$ may be the whole line $\R$$)$, then all the eight $p$-minimal surfaces 
\begin{equation}
\begin{split}
&S_{I}^{-}(\zeta_{1}),\ S_{I}^{+}(\zeta_{1}),\ S_{II}^{-}(\zeta_{1}),\ S_{II}^{+}(\zeta_{1});\ \textrm{and}\\
&\Sigma_{I}(\zeta_{1},\zeta_{2}),\ \Sigma_{II}^{-}(\zeta_{1},\zeta_{2}),\ \Sigma_{II}^{+}(\zeta_{1},\zeta_{2})\ \textrm{and}\ \Sigma_{III}(\zeta_{1},\zeta_{2})
\end{split}
\end{equation}
are immersed, in addition, they are {\bf maximal} in the following sense:
\begin{itemize} 
\item Any connected $p$-minimal surface of {\bf special type I} with $\zeta_{1}(y)$ as the $\zeta_{1}$-invariant is a part of either $S_{I}^{-}(\zeta_{1})$ or $S_{I}^{+}(\zeta_{1})$.
\item Any connected $p$-minimal surface of {\bf special type II} with $\zeta_{1}(y)$ as the $\zeta_{1}$-invariant is a part of either $S_{II}^{-}(\zeta_{1})$ or $S_{II}^{+}(\zeta_{1})$.
\item Any connected $p$-minimal surface of {\bf type I} with $\zeta_{1}(y)$ and $\zeta_{2}(y)$ as the $\zeta_{1}$- and $\zeta_{2}$-invariants is a part of  $\Sigma_{I}(\zeta_{1},\zeta_{2})$.
\item Any connected $p$-minimal surface of {\bf type II} with $\zeta_{1}(y)$ and $\zeta_{2}(y)$ as the $\zeta_{1}$- and $\zeta_{2}$-invariants is a part of either $\Sigma_{II}^{-}(\zeta_{1},\zeta_{2})$ or $\Sigma_{II}^{+}(\zeta_{1},\zeta_{2})$.
\item Any connected $p$-minimal surface of {\bf type III} with $\zeta_{1}(y)$ and $\zeta_{2}(y)$ as the $\zeta_{1}$- and $\zeta_{2}$-invariants is a part of  $\Sigma_{III}(\zeta_{1},\zeta_{2})$.
\end{itemize}
\end{thm}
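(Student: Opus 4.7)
\textbf{Proof proposal for Theorem \ref{main061}.}
The plan is to construct each of the eight surfaces as the immersed image of a carefully chosen connected open domain in the plane, and then use the uniqueness clause of Theorem \ref{main051} to argue that any connected $p$-minimal surface with the appropriate invariants must be a piece of one of them.

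First I would pin down the natural maximal domain for each normal form from Theorem \ref{main041}. For each type I identify the locus in $(\R^{2};x,y)$ on which $\alpha$ is smooth and $b$ is nonzero, and then take its connected components. For special type I with invariant $\zeta_{1}(y)$, the function $\alpha = 1/(x+\zeta_{1}(y))$ is smooth on the complement of the curve $x = -\zeta_{1}(y)$, which has two connected components giving $S_{I}^{\pm}(\zeta_{1})$. Similarly, special type II produces $S_{II}^{\pm}(\zeta_{1})$. For the general type, substituting $\alpha = (x+\zeta_{1})/((x+\zeta_{1})^{2}+\zeta_{2})$ into the formula for $b$ in Theorem \ref{main041}(3) shows that the factor $|x+\zeta_{1}|$ in the denominator cancels against $|\alpha|$, so the only genuine singular locus is $(x+\zeta_{1}(y))^{2} + \zeta_{2}(y) = 0$. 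When $\zeta_{2}>0$ this complement is connected and yields $\Sigma_{I}(\zeta_{1},\zeta_{2})$; when $\zeta_{2}<0$ it has three connected components, namely the two regions outside the parabolic branches and the region in between, yielding $\Sigma_{II}^{\pm}(\zeta_{1},\zeta_{2})$ and $\Sigma_{III}(\zeta_{1},\zeta_{2})$.

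On each connected maximal domain $U$ so produced, Theorem \ref{main051} supplies an immersion $X\colon U \to H_{1}$ (constructed locally around every point and then glued by the uniqueness clause up to a Heisenberg rigid motion), whose image is by definition the corresponding maximal $p$-minimal surface. Immersivity along the full domain is automatic because $\{\hat{e}_{1},\hat{e}_{2}\}$ remains an orthonormal frame field of the tangent bundle. For the maximality clause, let $\tilde{\Sigma}$ be any connected $p$-minimal surface of, say, general type III with $\zeta_{1}$ and $\zeta_{2}$ as its invariants. By Theorem \ref{main041}, $\tilde{\Sigma}$ carries a normal coordinate atlas in which the triple $(\alpha,a,b)$ has the prescribed form. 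On each chart, Theorem \ref{main051} identifies the chart, up to a unique Heisenberg rigid motion, with an open subset of the model surface $\Sigma_{III}(\zeta_{1},\zeta_{2})$. Uniqueness on overlaps forces these chartwise rigid motions to coincide, and connectedness of $\tilde{\Sigma}$ then promotes them to a single global rigid motion, producing an open immersion $\tilde{\Sigma} \to \Sigma_{III}(\zeta_{1},\zeta_{2})$ which realizes $\tilde{\Sigma}$ as a part of the maximal surface. The four remaining cases are handled identically.

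The main obstacle I anticipate is the bookkeeping in this maximality step: one must verify that the local Heisenberg rigid motions identifying the various charts of $\tilde{\Sigma}$ with pieces of the model do indeed agree on overlaps so as to assemble into one global rigid motion, and that the resulting global map lands in a single connected component of the model's domain rather than jumping between branches of the singular-locus decomposition. Both points follow from the uniqueness part of Theorem \ref{main051} together with connectedness of $\tilde{\Sigma}$, once one discriminates the branches by the signs of $\alpha$ (for the special types) or by the signs of $x+\zeta_{1}(y)$ and $(x+\zeta_{1}(y))^{2}+\zeta_{2}(y)$ (for the general type); these signs are locally constant on $\tilde{\Sigma}$ and hence constant throughout.
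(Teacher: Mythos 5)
Your proposal is correct and follows essentially the same route as the paper: take the connected components of the complement of the zero locus of $x+\zeta_{1}$, $2x+\zeta_{1}$, or $(x+\zeta_{1})^{2}+\zeta_{2}$ as the maximal domains, extend the local embeddings of Theorem \ref{main051} to an immersion on each component by the uniqueness clause, and deduce maximality from the completeness of the $\zeta$-invariants. The only ingredient the paper makes explicit that you gloss over is that these domains are not just connected but simply connected, which is what lets the chartwise rigid motions glue without monodromy; since each domain is a region between graphs over $(c,d)$, this is immediate.
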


As applications of this theory, in Section \ref{strofsing}, we give a complete description of the structures of the singular sets of $p$-minimal surfaces in the Heisenberg group $H_{1}$. 

\begin{thm}\label{strofsiset}
The singular set of a $p$-minimal surface is either 
\begin{enumerate}
\item an isolated point; or 
\item a $C^1$ smooth curve.   
\end{enumerate}
In addition, an isolated singular point only happens in the surfaces of special type I with $\zeta_{1}=\textrm{const.}$, that is, a part of the graph $u=0$ contains the origin as the isolated singular point.
\end{thm}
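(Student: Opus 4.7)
The plan is to combine the type-stratification of the regular part (Theorems \ref{main031}, \ref{main041}, \ref{main061}) with a local analysis of the embedding $X$ near the blow-up locus of the $\alpha$-function. The starting observation is that the singular set coincides, inside any regular chart, with the locus where $|\alpha|\to\infty$: indeed at a singular point $p$ one has $T_p\Sigma=\xi_p$, so $\alpha e_2+T$ cannot be tangent to $\Sigma$ at $p$, forcing $|\alpha|\to\infty$ along any approach through the regular part. Combined with Theorem \ref{main061}, which says that any connected regular piece with type is part of one of the eight maximal surfaces, it therefore suffices to describe the extension of the embedding to the blow-up locus in each normal form of Theorem \ref{main041}.

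First I would read off the blow-up locus in parameter coordinates directly from the three normal forms: in the general type it is $(x+\zeta_1(y))^2+\zeta_2(y)=0$, which is either empty or a smooth curve in $\R^2$; in special type II it is the line $2x+\zeta_1(y)=0$; and in special type I it is the line $x+\zeta_1(y)=0$. In every case the locus inside the $(x,y)$-chart is either empty or a $C^\infty$ curve. The essential question is its image in $H_1$ under the embedding $X$ obtained from Theorem \ref{main011}. Since $\Sigma$ is $p$-minimal, each characteristic curve $y=\textrm{const}$ is parameterized by $X(\cdot,y)$ as a straight line in $H_1$, so along each such line there is a well-defined endpoint $q(y):=\lim_{x\to x_0(y)}X(x,y)$ as one approaches the blow-up locus $x=x_0(y)$.

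Next I would compute $q(y)$ explicitly using the formulas for $a,b$ in Theorem \ref{main041} and the reconstruction procedure in Theorem \ref{main001}. The outcome, case by case, is: in the general type and in special type II, the map $y\mapsto q(y)$ has a non-vanishing derivative, so the singular set is a $C^1$ curve in $H_1$; in special type I with non-constant $\zeta_1(y)$, one again obtains a $C^1$ curve; only in special type I with $\zeta_1(y)$ constant do all the characteristic rays $y=\textrm{const}$ converge to a single common point, which after a Heisenberg rigid motion is the origin on the graph $u=0$. This is the unique mechanism producing an isolated singular point, matching the second clause of Theorem \ref{strofsiset}.

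The main obstacle will be the last step: showing that these local descriptions piece together globally. A connected $p$-minimal surface can be a union of several type-strata, and different strata may share portions of their singular sets. To handle this I would invoke the uniqueness-up-to-rigid-motion in Theorem \ref{main051} together with the maximality of Theorem \ref{main061} to show that a connected component of the singular set is contained in the closure of a single type-stratum; corners or crossings in $H_1$ are then ruled out because they would contradict the uniqueness of the embedding attached to prescribed $(\zeta_1,\zeta_2)$. Once this is in place, the dichotomy "isolated point versus $C^1$ curve" follows directly from the local analysis, with the additional information that isolated singular points occur only in the special-type-I, $\zeta_1=\textrm{const}$ scenario, i.e.\ on a piece of $u=0$ about the origin.
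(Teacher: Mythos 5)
Your plan is essentially the paper's proof: after reducing to the three normal forms of Theorem \ref{main041}, the paper computes the speed of the parametrized singular locus in the continuously extended induced metric \eqref{indmet} and finds it equals $|-\zeta_1'\pm c'|^2+c^2>0$ for general type, $(\zeta_1')^2/4+1>0$ for special type II, and $(\zeta_1')^2$ for special type I, which is exactly your endpoint-map computation of $q'(y)$ phrased in the ambient metric. The only step you assume rather than establish --- that every singular point is actually reached as the endpoint of a characteristic leaf of some typed chart, so that the normal-form analysis captures the whole singular set --- is the content of the paper's Proposition \ref{typro}, proved there via the phase-plane picture and the local structure result of \cite{CHMY}.
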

The result in Theorem \ref{strofsiset} is just a special case of Theorem 3.3 in \cite{CHMY}. However, we give a computable proof of this result for $p$-minimal surfaces. We also have the description of how a characteristic leaf goes through a singular curve, which is called a "go through" theorem in \cite{CHMY}.

\begin{thm}\label{theo2} 
Let $\Sigma\subset H_{1}$ be a $p$-minimal surface. Then the characteristic foliation is smooth around the singular curve in the following sense that each leaf can be extended smoothly to a point on the singular curve.
\end{thm}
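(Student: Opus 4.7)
The plan is to exploit two ingredients already recalled above: for a $p$-minimal surface every characteristic curve is a Euclidean straight line in $H_{1}\cong\R^{3}$, and the normal forms of Theorem \ref{main041} give explicit descriptions of $\alpha$ near the singular set. By Theorem \ref{strofsiset}, the singular set is either an isolated point (which occurs only on the plane $u=0$ near the origin, where every characteristic leaf is a horizontal straight line through the origin and the conclusion is immediate) or a $C^{1}$ smooth curve. So the essential case is to fix a point $p_{0}$ on such a singular curve and work in a neighborhood of $p_{0}$ in $\Sigma$.

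On the regular part adjacent to $p_{0}$, introduce a normal coordinate system $(x,y)$ as in Theorem \ref{main041}. Depending on the type of $\Sigma$, the singular curve is locally the zero locus of $1/\alpha$: $x=-\zeta_{1}(y)$ for special type I, $x=-\zeta_{1}(y)/2$ for special type II, or $(x+\zeta_{1}(y))^{2}+\zeta_{2}(y)=0$ for general type. Fix $y_{0}$, let $x^{\ast}=x^{\ast}(y_{0})$ be the endpoint of the $x$-interval at which $\alpha(\cdot,y_{0})$ blows up, and set $\gamma_{y_{0}}(x):=X(x,y_{0})$. Because $X_{x}=e_{1}\in\xi$ is horizontal of unit length and the defining relation $\nabla_{e_{1}}e_{2}=-He_{1}$ with $J$ parallel forces $\nabla_{e_{1}}e_{1}=He_{2}=0$ when $H=0$, the coefficients of $e_{1}$ in $\{\mathring{e}_{1},\mathring{e}_{2}\}$ are constant in $x$; writing $e_{1}=\cos\theta_{0}\,\mathring{e}_{1}+\sin\theta_{0}\,\mathring{e}_{2}$ with $\theta_{0}=\theta(y_{0})$ and integrating in $\R^{3}$ yields the affine description
\begin{equation*}
\gamma_{y_{0}}(x)=\bigl(a_{0}+x\cos\theta_{0},\;b_{0}+x\sin\theta_{0},\;c_{0}+x(b_{0}\cos\theta_{0}-a_{0}\sin\theta_{0})\bigr),
\end{equation*}
where $(a_{0},b_{0},c_{0})=X(0,y_{0})$.

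Since an affine line is $C^{\infty}$ on all of $\R$, the limit $p^{\ast}:=\lim_{x\to x^{\ast}}\gamma_{y_{0}}(x)$ exists in $H_{1}$ and $\gamma_{y_{0}}$ admits a $C^{\infty}$ extension through $p^{\ast}$. As $\Sigma$ is smoothly embedded and hence locally closed, $p^{\ast}\in\Sigma$. Finally, the identity $\langle \hat{e}_{2},T\rangle=1/\sqrt{1+\alpha^{2}}$ shows that $T_{\gamma_{y_{0}}(x)}\Sigma$ tends to the horizontal plane $\xi_{p^{\ast}}$ as $x\to x^{\ast}$, so $T_{p^{\ast}}\Sigma=\xi_{p^{\ast}}$ and $p^{\ast}$ is a singular point, necessarily on the $C^{1}$ singular curve through $p_{0}$.

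The main technical obstacle is this last identification of $p^{\ast}$ as a point of the singular curve: while the affine character of $\gamma_{y_{0}}$ delivers the smoothness of its extension and the membership $p^{\ast}\in\Sigma$ almost for free, the assertion $T_{p^{\ast}}\Sigma=\xi_{p^{\ast}}$ relies on the asymptotic blow-up of $\alpha$ encoded in the normal forms of Theorem \ref{main041} together with continuity of the tangent plane of $\Sigma$ across the singular curve. This is precisely why the concrete normal forms, rather than any abstract property of the characteristic foliation, are what make the argument go through.
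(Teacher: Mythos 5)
Your route is genuinely different from the paper's. The paper works directly at a singular point $p$, represents $\Sigma$ as a graph $z=u(x,y)$, and computes the one--sided limits of the angles between the characteristic direction $e_{1}=\frac{(u_{y}+x)\mathring{e}_{1}-(u_{x}-y)\mathring{e}_{2}}{D}$ and the coordinate vectors $X_{x},X_{y}$; after observing that $u_{xx}(p)$ and $(u_{xy}+1)(p)$ cannot both vanish, the mean value theorem gives $\lim_{q\to p}\frac{u_{y}+x}{u_{x}-y}=\frac{u_{xy}+1}{u_{xx}}(p)$ from either side, so $e_{1}$ has a well-defined limiting direction (flipping sign across the curve). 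You instead integrate $\nabla_{e_{1}}e_{1}=He_{2}=0$ to see that each leaf is an affine line in $\R^{3}$, and use the normal forms to see that the leaf's parameter interval terminates at a finite $x^{\ast}$ where $\alpha$ blows up. Your computation of the line and of $\lim T_{q}\Sigma=\xi_{p^{\ast}}$ is correct, and your argument is more geometric; what the paper's computation buys in exchange is precisely the sign-flip of $e_{1}$ across the singular curve, i.e.\ the matching of the leaf on one side with the leaf on the other, which is the ingredient actually invoked later in the proof of Theorem \ref{main10}.

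There is, however, a genuine gap at the sentence ``As $\Sigma$ is smoothly embedded and hence locally closed, $p^{\ast}\in\Sigma$.'' Local closedness only says that each point \emph{of} $\Sigma$ has an ambient neighborhood in which $\Sigma$ is closed; it does not force a limit of points of $\Sigma$ to belong to $\Sigma$ (an open disk with its boundary deleted is embedded and locally closed). Indeed, applied verbatim to the regular surface $\Sigma\setminus\Gamma$, your argument would ``prove'' that its leaves terminate at points of $\Sigma\setminus\Gamma$, which is false. The fix is the localization you announce at the start but never use: fix $p_{0}\in\Gamma$ and $\epsilon>0$ so small that $\Sigma\cap\overline{B(p_{0},\epsilon)}$ is closed in $\overline{B(p_{0},\epsilon)}$ and the singular set there is exactly $\Gamma$. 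For a regular point $q=(x_{0},y_{0})$ near $p_{0}$ one has $\alpha(q)\to\infty$, and the normal forms of Theorem \ref{main041} give $|x_{0}-x^{\ast}(y_{0})|=O(1/|\alpha(q)|)\to 0$, while the Euclidean speed of the line $\gamma_{y_{0}}$ is bounded by $\sqrt{1+|q|^{2}}$; hence the whole segment from $q$ to $p^{\ast}$ stays in $\overline{B(p_{0},\epsilon)}$, and only then does closedness yield $p^{\ast}\in\Sigma$, after which your tangent-plane computation places $p^{\ast}$ on $\Gamma$. With this step inserted the proof of the statement as literally phrased (extension of each leaf \emph{to} a point of the singular curve) is complete; note also that Theorem \ref{strofsiset} is proved in the paper independently of Theorem \ref{theo2}, so your appeal to it is not circular.
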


Due to Theorem \ref{theo2}, we have the following result.
\begin{thm}\label{main10}
Let $\Sigma$ be a $p$-minimal surface of type II $($III$)$. If it can be smoothly extended through the singular curve, then the other side of the singular curve is of type III $($II$)$.
\end{thm}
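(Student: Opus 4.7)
The plan is to read off both types from the normal-form representation given in Theorem \ref{main041} and then use the maximal-surface construction of Theorem \ref{main061} to identify what lies on the other side of the singular curve. Recall that for a general-type $p$-minimal surface in a normal coordinate system $(x,y)$, the $\alpha$-function reads
\begin{equation*}
\alpha(x,y)=\frac{x+\zeta_{1}(y)}{(x+\zeta_{1}(y))^{2}+\zeta_{2}(y)}.
\end{equation*}
Singular points are exactly points where the tangent plane becomes horizontal, which on the regular side manifests as $\alpha\to\infty$; this happens iff $(x+\zeta_{1}(y))^{2}+\zeta_{2}(y)=0$, which in turn requires $\zeta_{2}(y)<0$ and locates the singular curve at $x+\zeta_{1}(y)=\pm\sqrt{-\zeta_{2}(y)}$. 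By the classification in Subsection \ref{claofpmi}, types II and III are precisely the two kinds of connected components of $\{(x+\zeta_{1}(y))^{2}+\zeta_{2}(y)\ne 0\}$ in the case $\zeta_{2}<0$: one of them corresponds to $|x+\zeta_{1}|>\sqrt{-\zeta_{2}}$ (the two outer strips $\Sigma_{II}^{\pm}(\zeta_{1},\zeta_{2})$) and the other to $|x+\zeta_{1}|<\sqrt{-\zeta_{2}}$ (the inner strip $\Sigma_{III}(\zeta_{1},\zeta_{2})$).

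First I would translate the hypothesis "smoothly extended through the singular curve" into an ambient statement in $H_{1}$: the immersion $X\colon U\to H_{1}$ extends smoothly to an open neighborhood of the singular arc. Using Theorem \ref{theo2}, each characteristic leaf extends smoothly to and across a unique point of the singular curve, so the extended surface is still foliated by straight characteristic lines on both sides of the singular curve. Hence the extension is again a $p$-minimal surface (with singular set along the given curve), and one can apply Theorem \ref{main061} on the far side to represent it via normal-form invariants $\tilde\zeta_{1}(y)$ and $\tilde\zeta_{2}(y)$.

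Next I would argue that these invariants must agree with the original $\zeta_{1},\zeta_{2}$. The function $\zeta_{2}(y)$ appears in the expression $(x+\zeta_{1}(y))^{2}+\zeta_{2}(y)$ for $1/\alpha$ times $(x+\zeta_{1})$, which is the same analytic object $D=-1/\alpha$ from \cite{CHMY12} used in \eqref{Codeq01}; since each characteristic line meets the singular curve in a single point and extends as a straight line in $H_{1}$, the parameter $x+\zeta_{1}(y)$ simply changes sign across the singular point on that leaf, while $\zeta_{2}(y)$ (which is determined along the singular curve by the geometry of the surrounding foliation) is unchanged. Thus the extended surface shares the same $\zeta_{1},\zeta_{2}$ and lies in the neighboring connected component of $\{(x+\zeta_{1})^{2}+\zeta_{2}\ne 0\}$. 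By the above description of types II and III in terms of these components, passing from $|x+\zeta_{1}|>\sqrt{-\zeta_{2}}$ to $|x+\zeta_{1}|<\sqrt{-\zeta_{2}}$ (or vice versa) interchanges type II with type III.

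The main obstacle will be justifying that the smooth extension across the singular curve lands in the adjacent component, rather than, say, reproducing the same side or jumping to a different pole. For this I would lean on the uniqueness part of Theorem \ref{main051} and \ref{main061}: once the invariants $\zeta_{1}(y),\zeta_{2}(y)$ and a Heisenberg rigid motion are fixed, the extended $p$-minimal surface is unique, and comparing with the explicit maximal surfaces $\Sigma_{II}^{\pm}(\zeta_{1},\zeta_{2})$ and $\Sigma_{III}(\zeta_{1},\zeta_{2})$ forces the asserted type on the other side of the singular curve.
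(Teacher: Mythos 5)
Your overall frame (normal form, identifying types II and III with the outer and inner components of $\{(x+\zeta_{1})^{2}+\zeta_{2}\neq 0\}$, extending compatible coordinates across the singular curve via Theorem \ref{theo2}) matches the paper's setup, but the proposal has a genuine gap at exactly the step you flag as ``the main obstacle,'' and the tool you propose for closing it cannot do the job. After extending the coordinates across the singular curve, the $\alpha$-function on the far side is, by Theorem \ref{main031}, a priori any solution of \eqref{lieq} blowing up along that curve: it could be $\frac{1}{x+\zeta_{1}(y)-\sqrt{-\zeta_{2}(y)}}$ (special type I), $\frac{1}{2x+2(\zeta_{1}(y)-\sqrt{-\zeta_{2}(y)})}$ (special type II), or a general-type solution. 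Your assertion that ``$\zeta_{2}(y)$ is determined along the singular curve by the geometry of the surrounding foliation'' is precisely what needs proof, and the uniqueness clauses of Theorems \ref{main051} and \ref{main061} cannot supply it: they say the surface is determined once the invariants are known, but they give no way to compute the invariants of an extension, and they do not assert that $\Sigma_{II}^{+}(\zeta_{1},\zeta_{2})$ and $\Sigma_{III}(\zeta_{1},\zeta_{2})$ glue smoothly across the singular curve into the given extended surface. (Also, your statement that $x+\zeta_{1}(y)$ changes sign at the singular point is off: on the singular curve $x+\zeta_{1}(y)=\pm\sqrt{-\zeta_{2}(y)}\neq 0$; it is $(x+\zeta_{1})^{2}+\zeta_{2}=1/\alpha\cdot(x+\zeta_1)$ that changes sign there.)

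The paper closes this gap with an argument you never invoke: the first fundamental form $I$ is defined and smooth on the \emph{whole} surface, including the singular curve, and in the extended coordinates it reads
\begin{equation*}
I=dx\otimes dx-\frac{a}{b}\,dx\otimes dy-\frac{a}{b}\,dy\otimes dx+\frac{1+a^{2}}{b^{2}}\,dy\otimes dy ,
\end{equation*}
where $a,b$ are determined by the type of $\alpha$ through \eqref{foforb}, \eqref{fofora} (equivalently \eqref{indmet}). Matching this with $I=dx\otimes dx+\frac{1}{b^{2}}dy\otimes dy$ from the type II side forces $a=0$ and $b=\frac{|\alpha|}{|x+\zeta_{1}(y)|\sqrt{1+\alpha^{2}}}$ with the \emph{same} $\zeta_{1},\zeta_{2}$, which rules out the special-type candidates and shows the extended coordinates remain normal; the formula for $\alpha$ on $|x+\zeta_{1}|<\sqrt{-\zeta_{2}}$ then exhibits type III. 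To repair your proof you would need to add this metric-comparison step (or an equivalent quantitative argument), since without it nothing prevents the far side from being of special type or from carrying different pole data.
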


Theorem \ref{main10} plays a key point to enable us to recover the Bernstein-type theorem (see Section \ref{strofsing}), which was first shown in the original paper \cite{CHMY} (or see \cite{ACV,DGNP,DGNP1}), and says that
\[u(x,y)=Ax+By+C,\]
for some constants $A,B,C\in\R$, and 
\[u(x,y)=-ABx^{2}+(A^{2}-B^{2})xy+ABy^{2}+g(-Bx+Ay),\]
where $A,B$ are constants such that $A^2+B^2=1$ and $g\in C^{\infty}(\R)$, are the only two classes of entire smooth solutions to the $p$-minimal graph equation \eqref{pmge}. In addition, in Section \ref{examples}, we present some basic examples which, in particular, help us figure out the Bernstein-type theorem. 

Finally, in Section \ref{appcon}, depending on a parametrized curve $\mathcal{C}(\theta)=(x(\theta),y(\theta),z(\theta))$ for $\theta\in\R$, we deform the graph $u=0$ in some way to construct $p$-minimal surfaces with parametrization
 \begin{equation}\label{paraformini}
 Y(r,\theta)=(x(\theta)+r\cos{\theta},y(\theta)+r\sin{\theta},z(\theta)+ry(\theta)\cos{\theta}-rx(\theta)\sin{\theta}),
 \end{equation}
for $r\in\R$. It is easy to checked that $Y$ is {\bf an immersion} if and only if either $\Theta(\mathcal{C}'(\theta))-(y'(\theta)\cos{\theta}-x'(\theta)\sin{\theta})^{2}\neq 0\ \textrm{or}\ r+(y'(\theta)\cos{\theta}-x'(\theta)\sin{\theta})\neq 0$ for all $\theta$ (see Remark \ref{re92}).
In particular, 
the surface $Y$ defines a $p$-minimal surface of {\bf special type I} if the curve $\mathcal{C}$ satisfies
\begin{equation}\label{707}
z'(\theta)+x(\theta)y'(\theta)-y(\theta)x'(\theta)-(y'(\theta)\cos{\theta}-x'(\theta)\sin{\theta})^2=0,
\end{equation}
for all $\theta$, or equivalently
\begin{equation}\label{708}
z(\theta)=\int\big[(y'(\theta)\cos{\theta}-x'(\theta)\sin{\theta})^{2}+y(\theta)x'(\theta)-x(\theta)y'(\theta)\big] d\theta.
\end{equation}
In addition, the corresponding $\zeta_{1}$-invariant reads
\begin{equation}\label{713}
\zeta_{1}(\theta)=y'(\theta)\cos{\theta}-x'(\theta)\sin{\theta}-\int \big[x'(\theta)\cos{\theta}+y'(\theta)\sin{\theta}\big] d\theta,
\end{equation}
where $\int \big[x'(\theta)\cos{\theta}+y'(\theta)\sin{\theta}\big] d\theta$ is an anti-derivative of the function $x'(\theta)\cos{\theta}+y'(\theta)\sin{\theta}$.

Similarly, 
the surface $Y$ defines a $p$-minimal surface of {\bf general type} if the curve $\mathcal{C}$ satisfies
\begin{equation}\label{7071}
z'(\theta)+x(\theta)y'(\theta)-y(\theta)x'(\theta)-(y'(\theta)\cos{\theta}-x'(\theta)\sin{\theta})^2\neq 0,
\end{equation}
for all $\theta$. In addition, the corresponding $\zeta_{1}$- and $\zeta_{2}$-invariant read
\begin{equation}\label{714}
\left\{\begin{split}
\zeta_{1}(\theta)&=y'(\theta)\cos{\theta}-x'(\theta)\sin{\theta}-\int \big[x'(\theta)\cos{\theta}+y'(\theta)\sin{\theta}\big] d\theta\\
\zeta_{2}(\theta)&=z'(\theta)+x(\theta)y'(\theta)-y(\theta)x'(\theta)-(y'(\theta)\cos{\theta}-x'(\theta)\sin{\theta})^2.
\end{split}\right.
\end{equation}

From this construction, together with Theorem \ref{main09} which gives parametrizations for $p$-minimal surfaces of special type II, we conclude that we have generically provided a parametrization  
for any given $p$-minimal surface (see the argument in Section \ref{appcon}).\\

{\bf Acknowledgments.} The first author's research was supported in part by NCTS and in part by MOST 109-2115-M-007-004 -MY3. The second author's research was supported in part by MOST 108-2115-M-032-008-MY2 and 110-2115-M-032-005-MY2.

\section{The Fundamental Theorem for surfaces in $H_{1}$}
 In this section, we first review the fundamental theorem for surfaces in the Heisenberg group $H_{1}$ (Theorem \ref{fudthm}). For the details, we refer the reader to \cite{CL}. Next, we give another version (Theorem \ref{main021}) of this theorem in terms of compatible coordinate systems.

\subsection{The Fundamental Theorem for surfaces in $H_{1}$}
Recall that there are four invariants for surfaces induced on a surface $\Sigma$ from the Heisenberg group $H_{1}$:
\begin{enumerate}
\item The first fundamental form (or the induced metric) $I$, which is the adapted metric $g_{\Theta}$ restricted to $\Sigma$. This metric is actually defined on the whole surface $\Sigma$.
\item The directed characteristic foliation $e_{1}$, which is a unit vector field $\in T\Sigma\cap\xi$. This vector field is only defined on the regular part of $\Sigma$.
\item The $\alpha$-function $\alpha$, which is a function defined on the regular part such that $\alpha e_{2}+T\in T\Sigma$, where $e_{2}=Je_{1}$.
\item The $p$-mean curvature $H$, which is a function on the regular part defined by $\nabla_{e_{1}}e_{2}=-He_{1}$.
\end{enumerate}
These four invariants constitute a complete set of invariants for surfaces in $H_{1}$. That is, if $\phi:\Sigma_{1}\rightarrow\Sigma_{2}$ is a diffeomorphism between these two surfaces which preserves these four invariants, then $\phi$ is the restriction of a Heisenberg rigid motion $\Phi$. We have the {\bf integrability condition}
\begin{equation}\label{intcon}
\begin{split}
\hat{\omega}_{1}{}^{2}(\hat{e}_{1})&=\frac{H\alpha}{(1+\alpha^2)^{1/2}},\\
\hat{\omega}_{1}{}^{2}(\hat{e}_{2})&=2\alpha+\frac{\alpha(\hat{e}_{1}\alpha)}{1+\alpha^2},\\
\hat{e}_{2}H&=\frac{\hat{e}_{1}\hat{e}_{1}\alpha+6\alpha(\hat{e}_{1}\alpha)+4\alpha^{3}+\alpha H^{2}}{(1+\alpha^2)^{1/2}},
\end{split}
\end{equation}
where $\hat{e}_{2}=\frac{\alpha e_{2}+T}{\sqrt{1+\alpha^{2}}}$, which is nothing to do with the orientation of $\Sigma$ but a vector field and only determined by the contact form $\Theta$. Moreover, $\hat{e}_{1}=e_{1}$, which is the characteristic direction and is determined up to a sign (if we choose $\hat{e}_{1}$ such that $\hat{e}_{1}\wedge\hat{e}_{2}$ is compatible with the orientation of $\Sigma$ then $\hat{e}_{1}$ is unique). The form $\hat{\omega}_{1}{}^{2}$ is the Levi-Civita connection form with respect to the frame $\{\hat{e}_{1},\hat{e}_{2}\}$. We present the following fundamental theorem (see \cite{CL}).

\begin{thm}[{\bf The Fundamental theorem for surfaces in $H_{1}$}]\label{fudthm}
Let $(\Sigma ,g)$ be a Riemannian $2$-manifold, 
and let $\hat{\alpha},\hat{H}$ be two real-valued functions on $\Sigma $. Assume that $g$, together with $\hat{\alpha},\hat{H}$, 
satisfies the integrability condition \eqref{intcon}, with $\alpha ,H$ replaced by $\hat{\alpha},\hat{H}$, respectively. Then for every point $p\in\Sigma $, there exist an open neighborhood $U$ containing $p$, and an
embedding $X:U\rightarrow H_{1}$ such that $g=X^{\ast }(I), \hat{\alpha}=X^{\ast }\alpha $ and $\hat{H}=X^{\ast }H$.
And $X_{\ast}(\hat{e}_{1})$ defines the foliation on $X(U)$ induced from $H_{1}$.
Moreover, $X$ is unique up to a Heisenberg rigid motion.
\end{thm}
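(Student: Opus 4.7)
The plan is to use Cartan's method of moving frames combined with the fundamental theorem on Lie-group-valued maps (Frobenius in disguise), viewing the sought embedding as the projection of a lift $F:U\to PSH(1)$ to the Heisenberg frame bundle.

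First, I would record the Maurer--Cartan structure of the target. On $H_1$ the left-invariant coframe $\{\theta^1,\theta^2,\Theta\}$ dual to $\{e_1,e_2,T\}$ satisfies $d\theta^1=\omega_1{}^2\wedge\theta^2$, $d\theta^2=-\omega_1{}^2\wedge\theta^1$ and $d\Theta=2\theta^1\wedge\theta^2$, where $\omega_1{}^2$ is the $\mathfrak{so}(2)$-part recording the rotation of $\{e_1,e_2\}$ against the parallel frame $\{\mathring e_1,\mathring e_2\}$. On $U\subset\Sigma$, I would pick a $g$-orthonormal frame $\{\hat e_1,\hat e_2\}$ with dual $\{\hat\theta^1,\hat\theta^2\}$ and Levi-Civita connection form $\hat\omega_1{}^2$. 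The ansatz $\hat e_1=e_1$ and $\hat e_2=(\hat\alpha e_2+T)/\sqrt{1+\hat\alpha^2}$ forces any candidate embedding to pull back the ambient coframe to
\[
X^*\theta^1=\hat\theta^1,\quad X^*\theta^2=\tfrac{\hat\alpha}{\sqrt{1+\hat\alpha^2}}\hat\theta^2,\quad X^*\Theta=\tfrac{1}{\sqrt{1+\hat\alpha^2}}\hat\theta^2,
\]
and I would \emph{prescribe} $X^*\omega_1{}^2$ using the first two equations of \eqref{intcon}. Assembling these four 1-forms into a single $\mathfrak{psh}(1)$-valued 1-form $\eta$ on $U$ reduces the construction of $X$ to proving that $\eta$ satisfies the Maurer--Cartan equation $d\eta+\tfrac12[\eta,\eta]=0$.

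A direct computation using the Cartan structure equations on $H_1$ and on $(\Sigma,g)$ shows that the first two equations of \eqref{intcon} reproduce $d(X^*\theta^1)$ and $d(X^*\theta^2)$ correctly, while the third equation of \eqref{intcon} is precisely the integrability $d(X^*\Theta)=2(X^*\theta^1)\wedge(X^*\theta^2)$, taking into account the evolution of $\hat\alpha$ and $\hat H$ along $\hat e_1,\hat e_2$. Once the Maurer--Cartan equation is verified, the classical fundamental theorem on Lie-group-valued maps produces, for each prescribed initial lift $F(p)\in PSH(1)$, a unique smooth $F:U\to PSH(1)$ with $F^*\omega_{MC}=\eta$ on a possibly smaller neighborhood of $p$; setting $X=\pi\circ F$ for the projection $\pi:PSH(1)\to H_1$ gives the desired map. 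It is immersive because $(X^*\theta^1)\wedge(X^*\Theta)=\frac{1}{\sqrt{1+\hat\alpha^2}}\hat\theta^1\wedge\hat\theta^2$ never vanishes on $U$, and after shrinking $U$ further it is injective, hence an embedding. The identities $g=X^*I$, $\hat\alpha=X^*\alpha$ and $\hat H=X^*H$ hold by construction, and $X_*\hat e_1$ is horizontal, so it defines the induced characteristic foliation. For uniqueness, any two solutions produce lifts $F,F'$ with the same $F^*\omega_{MC}$, so the uniqueness clause of the Lie-group-valued fundamental theorem gives $F'=L_\Phi F$ for a constant $\Phi\in PSH(1)$, whence $X'=\Phi\circ X$.

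The main obstacle I anticipate is the line-by-line check that \eqref{intcon} is exactly equivalent to the Maurer--Cartan integrability of $\eta$. This is a careful bookkeeping exercise in which the factors $\sqrt{1+\hat\alpha^2}$ and the first two derivatives of $\hat\alpha$ must recombine to produce the very specific form of the third equation of \eqref{intcon}, and one has to distinguish carefully between $\hat\omega_1{}^2$ (the Levi-Civita connection of $g$) and the pulled-back Heisenberg rotation $X^*\omega_1{}^2$, and see that the definition of the latter via the first two equations of \eqref{intcon} is exactly what makes them agree.
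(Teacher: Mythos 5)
First, note that the paper does not actually prove Theorem \ref{fudthm}: it is quoted from \cite{CL}, so there is no in-paper argument to compare against. Your moving-frames/Maurer--Cartan architecture is the standard (and, as far as I can tell, the cited source's) route, and the skeleton --- prescribe the pullbacks of $\theta^1,\theta^2,\Theta,\omega_1{}^2$, verify the Maurer--Cartan equation, integrate to a lift $F:U\to PSH(1)$, project, and get uniqueness up to left translation --- is sound. The pullback formulas $X^*\theta^1=\hat\theta^1$, $X^*\theta^2=\frac{\hat\alpha}{\sqrt{1+\hat\alpha^2}}\hat\theta^2$, $X^*\Theta=\frac{1}{\sqrt{1+\hat\alpha^2}}\hat\theta^2$ are correct, as is the immersivity argument via $X^*(\theta^1\wedge\Theta)\neq 0$.

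There is, however, a concrete gap in your verification plan: you list only three structure equations to check (those for $d\theta^1$, $d\theta^2$, $d\Theta$) and assert that the third line of \eqref{intcon} is the integrability of $d(X^*\Theta)=2\,X^*\theta^1\wedge X^*\theta^2$. It is not. A direct computation with $d\hat\theta^2=\hat\omega_1{}^2(\hat e_2)\,\hat\theta^1\wedge\hat\theta^2$ gives
\begin{equation*}
d\Bigl(\tfrac{1}{\sqrt{1+\hat\alpha^2}}\hat\theta^2\Bigr)=\Bigl[\tfrac{\hat\omega_1{}^2(\hat e_2)}{\sqrt{1+\hat\alpha^2}}-\tfrac{\hat\alpha(\hat e_1\hat\alpha)}{(1+\hat\alpha^2)^{3/2}}\Bigr]\hat\theta^1\wedge\hat\theta^2
=\tfrac{2\hat\alpha}{\sqrt{1+\hat\alpha^2}}\,\hat\theta^1\wedge\hat\theta^2,
\end{equation*}
which is exactly the \emph{second} line of \eqref{intcon}; the $d\theta^1$ and $d\theta^2$ equations then yield the first line and the determination of $\omega_1{}^2(\hat e_2)=(\hat e_1\hat\alpha+2\hat\alpha^2)/\sqrt{1+\hat\alpha^2}$. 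Consequently the three equations you propose to check never use the third line of \eqref{intcon} at all, and the $\mathfrak{so}(2)$-component of the Maurer--Cartan equation is left unverified. The missing fourth equation is $d(X^*\omega_1{}^2)=0$ (flatness of the pseudohermitian connection of $H_1$), applied to $X^*\omega_1{}^2=\hat H\,\hat\theta^1+\frac{\hat e_1\hat\alpha+2\hat\alpha^2}{\sqrt{1+\hat\alpha^2}}\hat\theta^2$; expanding it produces precisely $\hat e_2\hat H=\bigl(\hat e_1\hat e_1\hat\alpha+6\hat\alpha(\hat e_1\hat\alpha)+4\hat\alpha^3+\hat\alpha\hat H^2\bigr)(1+\hat\alpha^2)^{-1/2}$, i.e.\ the third line of \eqref{intcon}. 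Once you add this equation to your checklist the proof closes as you describe; without it the integrability is not established.
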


\subsection{The new version of the integrability conditions}\label{neveinco}
The goal of this subsection is to express the integrability condition \eqref{intcon} in terms of a compatible coordinate system $(x,y)$. We write 
\begin{equation}
\hat{e}_{2}=a(x,y)\frac{\partial}{\partial x}+b(x,y)\frac{\partial}{\partial y},
\end{equation}
for some functions $a$ and $b\neq 0$. We can assume, without loss of generality, that $b>0$, that is, both $\frac{\partial}{\partial x}\wedge\frac{\partial}{\partial y}$ and $\hat{e}_{1}\wedge\hat{e}_{2}$ define the same orientation on $\Sigma$.
The two functions $a$ and $b$ are a representation of the first fundamental form $I$. The dual co-frame $\{\hat{\omega}^{1},\hat{\omega}^{2}\}$ of $\{\hat{e}_{1},\hat{e}_{2}\}$ is
\begin{equation}\label{indcof}
\begin{split}
\hat{\omega}^{1}&=dx-\frac{a}{b}dy,\\
\hat{\omega}^{2}&=\frac{1}{b}dy.
\end{split}
\end{equation}
Then the Levi-Civita connection forms are uniquely determined by the following Riemannian structure equations
\begin{equation}\label{riestr}
\begin{split}
d\hat{\omega}^{1}&=\hat{\omega}^{2}\wedge\hat{\omega}_{2}{}^{1},\\
d\hat{\omega}^{2}&=\hat{\omega}^{1}\wedge\hat{\omega}_{1}{}^{2},
\end{split}
\end{equation}
with the normalized condition
\begin{equation}\label{norli}
\hat{\omega}_{1}{}^{2}+\hat{\omega}_{2}{}^{1}=0.
\end{equation}
A computation shows that
\begin{equation}
\begin{split}
d\hat{\omega}^{1}&=-d\left(\frac{a}{b}\right)\wedge dy=-\left(\frac{bda-adb}{b^{2}}\right)\wedge dy\\
&=\frac{dy}{b}\wedge\frac{bda-adb}{b}=\hat{\omega}^{2}\wedge\frac{bda-adb}{b}.
\end{split}
\end{equation}
By comparing with the first equation of the Riemannian structure equations (\ref{riestr}), we have 
\begin{equation}\label{confo}
\hat{\omega}_{2}{}^{1}=\frac{bda-adb}{b}+a_{11}\hat{\omega}^{2}
\end{equation}
for some function $a_{11}$. On one hand, the second equation of (\ref{riestr}) and the normalized condition (\ref{norli}) imply
\begin{equation}\label{comofsec1}
\begin{split}
d\hat{\omega}^{2}&=\hat{\omega}^{1}\wedge\hat{\omega}_{1}{}^{2}\\
&=-(dx-\frac{a}{b}dy)\wedge\left(\frac{bda-adb}{b}+a_{11}\hat{\omega}^{2}\right)\\
&=\left(-a_{y}+\frac{a}{b}b_{y}-\frac{a_{11}}{b}-\frac{a}{b}a_{x}+\frac{a^2}{b^2}b_{x}\right)dx\wedge dy.
\end{split}
\end{equation}
On the other hand, one sees
\begin{equation}\label{comofsec2}
d\hat{\omega}^{2}=\left(d\frac{1}{b}\right)\wedge dy=-\frac{b_{x}}{b^{2}}dx\wedge dy.
\end{equation}
Equations (\ref{comofsec1}) and (\ref{comofsec2}) yield
\begin{equation}
a_{11}=\frac{b_{x}}{b}-ba_{y}+ab_{y}-aa_{x}+\frac{a^2}{b}b_{x}.
\end{equation}
Therefore,
\begin{equation}\label{confor}
\begin{split}
\hat{\omega}_{2}{}^{1}&=\frac{bda-adb}{b}+a_{11}\hat{\omega}^{2}\\
&=(ba_{x}-ab_{x})\frac{dx}{b}+(ba_{y}-ab_{y})\frac{dy}{b}+ \left(\frac{b_{x}}{b}-ba_{y}+ab_{y}-aa_{x}+\frac{a^2}{b}b_{x}\right)\frac{dy}{b}\\
&=\left(\frac{ba_{x}-ab_{x}}{b}\right)dx+\left(\frac{b_{x}}{b^2}-\frac{aa_{x}}{b}+\frac{a^{2}b_{x}}{b^2}\right)dy.
\end{split}
\end{equation}
From the connection form formula (\ref{confor}), we have
\begin{equation}
\begin{split}
\hat{\omega}_{1}{}^{2}(\hat{e}_{1})&=\frac{ab_{x}-ba_{x}}{b}=-a_{x}+a\frac{b_{x}}{b},\\
\hat{\omega}_{1}{}^{2}(\hat{e}_{2})&=\frac{a(ab_{x}-ba_{x})}{b}+b\left(\frac{aa_{x}}{b}-\frac{b_{x}}{b^2}-\frac{a^{2}b_{x}}{b^2}\right)=-\frac{b_{x}}{b}.
\end{split}
\end{equation}
Therefore, in terms of a compatible coordinate system $(U; x,y)$, the integrability condition (\ref{intcon}) is equivalent to 
\begin{equation}\label{intcon1}
\begin{split}
-a_{x}+a\frac{b_{x}}{b}&=\frac{H\alpha}{(1+\alpha^2)^{1/2}},\\
-\frac{b_{x}}{b}&=2\alpha+\frac{\alpha\alpha_{x}}{1+\alpha^2},\\
aH_{x}+bH_{y}&=\frac{\alpha_{xx}+6\alpha\alpha_{x}+4\alpha^{3}+\alpha H^{2}}{(1+\alpha^2)^{1/2}}.
\end{split}
\end{equation}

\subsection{The computation of the first fundamental form $I$}
We would like to solve the first two equations of (\ref{intcon1}), which are part of the integrability condition. From the second equation of (\ref{intcon1}), it is easy to see that 
\begin{equation}
\ln{|b|}=\int-\left(2\alpha+\frac{\alpha\alpha_{x}}{1+\alpha^2}\right)dx+k(y)
\end{equation}
for some function $k(y)$ in the variable $y$, that is
\begin{equation}\label{foforb}
\begin{split}
|b|&=e^{k(y)}e^{\int-\left(2\alpha+\frac{\alpha\alpha_{x}}{1+\alpha^2}\right)dx},\\
&=e^{k(y)}\frac{e^{-\int2\alpha dx}}{(1+\alpha^2)^{\frac{1}{2}}},
\end{split}
\end{equation}
where $\int 2\alpha dx$ is an anti-derivative of $2\alpha$ with respect to $x$. Throughout this paper, we always assume, without loss of generality, that $b>0$.
For $a$, we substitute the second equation of (\ref{intcon1}) into the first one to obtain the first-order linear ODE 
\begin{equation}
a_{x}+a\left(2\alpha+\frac{\alpha\alpha_{x}}{1+\alpha^2}\right)+\frac{H\alpha}{(1+\alpha^2)^{1/2}}=0.
\end{equation}
To solve $a$, we choose the integrating factor $u=e^{\int\left(2\alpha+\frac{\alpha\alpha_{x}}{1+\alpha^2}\right)dx}$ such that the one-form 
\begin{equation}
u\left(\left[a\left(2\alpha+\frac{\alpha\alpha_{x}}{1+\alpha^2}\right)+\frac{H\alpha}{(1+\alpha^2)^{1/2}}\right]dx+da\right)
\end{equation}
is an exact form. Therefore, using the standard method of ODE, one sees that
\begin{equation}\label{fofora}
\begin{split}
a&=e^{\int-\left(2\alpha+\frac{\alpha\alpha_{x}}{1+\alpha^2}\right)dx}\left(h(y)-\int \frac{H\alpha}{(1+\alpha^2)^{1/2}}e^{\int\left(2\alpha+\frac{\alpha\alpha_{x}}{1+\alpha^2}\right)dx}dx\right)\\
&=\frac{e^{-\int2\alpha dx}}{(1+\alpha^2)^{\frac{1}{2}}}\left(h(y)-\int H\alpha \left(e^{\int 2\alpha dx}\right) dx\right),
\end{split}
\end{equation}
for some function $h(y)$ in $y$ and $\int H\alpha \left(e^{\int 2\alpha dx}\right)dx$ is an anti-derivative of $H\alpha \left(e^{\int 2\alpha dx}\right)$ with respect to $x$. From (\ref{fofora}) and (\ref{foforb}), we conclude that the first fundamental form $I$ (or $a$ and $b$) is determined by $\alpha$ and $H$, up to two functions $k(y)$ and $h(y)$. We are thus ready to prove a more useful version of the fundamental theorem for surfaces (see Theorem \ref{main021}). 

\subsection{The proof of Theorem \ref{main021} with arbitrary $\alpha$ and $H$}

We define a Riemannian metric on $U$ by regarding $\{\frac{\partial}{\partial x},a(x,y)\frac{\partial}{\partial x}+b(x,y)\frac{\partial}{\partial y}\}$ as an orthonormal frame field, where $a$ and $b$ are specified as (\ref{foforb}) and (\ref{fofora}) with $h(y)$ and $k(y)$ given in (\ref{intcon01}).
Then it is easy to see that $\alpha$ and  $H$, together with $a$ and $b$, satisfy the integrability condition (\ref{intcon1}), and hence, by the fundamental theorem for surfaces (see Theorem \ref{fudthm}), $U$ can be embedded uniquely as a surface with $H$ and $\alpha$ as its $p$-mean curvature and $\alpha$-function, respectively. In addition, the characteristic direction $\hat{e}_{1}=\frac{\partial}{\partial x}$ and $\hat{e}_{2}=a(x,y)\frac{\partial}{\partial x}+b(x,y)\frac{\partial}{\partial y}$ define the induced metric on the embedded surface, as desired (see Theorem \ref{fudthm} for the original version).

\subsection{The Transformation law of invariants} 
 First of all, we compute  the transformation law of compatible coordinate systems. Let $(x,y)$ and $(\tilde{x},\tilde{y})$ be two compatible coordinate systems and $\phi$ a coordinate transformation, i.e., $(\tilde{x},\tilde{y})=\phi(x,y)$. Then we have $\phi_{*}\frac{\partial}{\partial x}=\frac{\partial}{\partial \tilde{x}}$, which means that
\begin{equation*}
[\phi_{*}]\left(\begin{array}{c}1\\0\end{array}\right)=\left(\begin{array}{cc}\tilde{x}_{x}&\tilde{x}_{y}\\\tilde{y}_{x}&\tilde{y}_{y}\end{array}\right)\left(\begin{array}{c}1\\0\end{array}\right)=\left(\begin{array}{c}1\\0\end{array}\right),
\end{equation*}
where $[\phi_{*}]$ is the matrix representation of $\phi_{*}$ with respect to these two coordinate systems. We then have the coordinates transformation
\begin{equation}\label{cortra}
\tilde{x}=x+\Gamma(y),\ \ \ \tilde{y}=\Psi(y),
\end{equation}
for some functions $\Gamma(y)$ and $\Psi(y)$. Since $\det{[\phi_{*}]}=\Psi^{'}(y)$, we immediately have that $\Psi^{'}(y)\neq 0$ for all $y$. 
Next, we compute the transformation law of representations of the induced metrics. Suppose that the representations of the induced metric are, respectively, given by $a,b$ and $\tilde{a},\tilde{b}$, that is, $\hat{e}_{2}=a\frac{\partial}{\partial x}+b\frac{\partial}{\partial y}=\tilde{a}\frac{\partial}{\partial \tilde{x}}+\tilde{b}\frac{\partial}{\partial \tilde{y}}$.
By \eqref{cortra}, we have the following transformation law of the induced metric as
\begin{equation}\label{mettra}
\tilde{a}=a+b\Gamma'(y),\ \ \ \tilde{b}=b\Psi'(y).
\end{equation}
Notice that we have omitted the sign of pull-back $\phi^{*}$ on $\tilde{a}$ and $\tilde{b}$. Since the $p$-mean curvature and $\alpha$-function are function-type invariants, they transform by pull-back. 

\section{Constant $p$-mean curvature surfaces}  
In this section, we aim to prove Theorem \ref{main021} with constant $H$ and then provide a new tool to study the constant $p$-mean curvature surfaces. More precisely, we will indicate that one can convert the investigation of the constant $p$-mean curvature surfaces into the study of the so-called {\bf Codazzi-like} equation.

\subsection{The proof of Theorem \ref{main021} with constant $H$}
Let $\Sigma\subset H_{1}$ be a {\bf constant} $p$-mean curvature surface with $H=c$. 
Then, in terms of a compatible coordinate system $(U;x,y)$, the integrability condition (\ref{intcon1}) is reduced to
\begin{equation}\label{intcon2}
\begin{split}
-a_{x}+a\frac{b_{x}}{b}&=\frac{c\alpha}{(1+\alpha^2)^{1/2}},\\
-\frac{b_{x}}{b}&=2\alpha+\frac{\alpha\alpha_{x}}{1+\alpha^2},\\
\alpha_{xx}+6\alpha\alpha_{x}&+4\alpha^{3}+c^{2}\alpha=0.
\end{split}
\end{equation}
In other words, there exists the $\alpha$ satisfying the {\bf Codazzi-like} equation
\begin{equation}\label{Codeq}
\alpha_{xx}+6\alpha\alpha_{x}+4\alpha^{3}+c^{2}\alpha=0,
\end{equation}
which is a nonlinear ordinary differential equation. 
Conversely, given an arbitrary function $\alpha(x,y)$ on a coordinate neighborhood $(U,x,y)\subset\R^{2}$ which satisfies the {\bf Codazzi-like} equation \eqref{Codeq} (or \eqref{Codeq01}). It is easy to see that equation (\ref{Codeq}) is just the equation \eqref{intcon01} in the constant $p$-mean curvature cases. Namely, that $\alpha$ satisfies equation (\ref{Codeq}) means it satisfies (\ref{intcon01}) for arbitrary functions $h(y)$ and $k(y)$. Therefore, the embedding $\Sigma=X(U)$ in Theorem \ref{main021} is a {\bf constant} $p$-mean curvature surface with $H=c$, the given function $\alpha(x,y)$ as its $\alpha$-function, and $\hat{e}_{1}=\frac{\partial}{\partial x}$. Notice that, given (\ref{foforb}) and (\ref{fofora}), the constant $p$-mean curvature surface determined by the given function $\alpha$ usually is not unique. They depend on two functions $h(y)$ and $k(y)$ in $y$. The assertion is completed.

Throughout the rest of this paper, we will apply the tool to the subject of the $p$-minimal surfaces.

\section{Solutions to the {\it Li\'enard equation}}\label{sofsotl}
Since we bring in a strategy to study $p$-minimal surfaces using the understanding of the {\bf Codazzi-like} equation \eqref{lieq},
we focus on studying this equation in this section. First of all, we suppose that $\alpha$ is regarded as a function in $x$ and we want to discuss the solutions to the {\bf Codazzi-like} equation 
\begin{equation}\label{lieq}
\alpha_{xx}+6\alpha\alpha_{x}+4\alpha^{3}=0.
\end{equation}
This is actually one kind of the so-called {\it Li\'enard equation} \cite{Burton}. Derivation of explicit solutions (see Theorem \ref{main031}) to the equation \eqref{lieq} is given below.
 
 \subsection{The proof of Theorem \ref{main031}}
 
Using $v=\alpha'$ to see that \eqref{lieq} becomes 
\begin{equation}\label{abel2}
v\frac{dv}{d\alpha}+6\alpha v+4\alpha^{3}=0,
\end{equation}
which is the second kind of the Abel equation (cf. \cite{ZP94,PZ03}). 
Apparently, given $v\neq 0$, equation \eqref{abel2} can be written as
\begin{equation}\label{chara}
\frac{dv}{d\alpha}=-\frac{2\alpha (3v+2\alpha^{2})}{v}.\end{equation}
Denote $u=\frac{1}{v}$. Then \eqref{abel2} or \eqref{chara} becomes
\begin{equation}\label{abel3}
	\frac{du}{d\alpha}=6\alpha u^2+4\alpha^3u^3.
\end{equation}

We apply Chiellini's integrability condition (stated in \cite{LA28,CA31}) for the Abel equation to \eqref{abel3}, which is exactly integrated with $k=\frac{2}{9}\neq 0$. It can be checked that 
$$
\frac{d}{d\alpha}\left(\frac{4\alpha^3}{6\alpha}\right)=\frac{d}{d\alpha}\left(\frac{2}{3}\alpha^2\right)=\frac{4}{3}\alpha=\frac{2}{9}(6\alpha).
$$
Let $u=\frac{6\alpha}{4\alpha^3}\omega$, i.e., $\omega=\frac{2}{3}\alpha^2u$ and $\omega \neq 0$. The equation \eqref{abel3} turns out to be
\begin{equation}\label{abel4}
	\frac{d\omega}{d\alpha}=\frac{\omega}{\alpha}(2+9\omega+9\omega^2),
\end{equation}
which is a separable first-order ODE, i.e.,
\begin{equation}\label{abel5}
	\frac{d\omega}{\omega(2+9\omega+9\omega^2)}=\frac{d\alpha}{\alpha}.
\end{equation}
Using the method of partial fractions to integrate the left-hand side, we have 
\begin{equation}\label{abel6}
	\int \left(\frac{1}{2\omega}-\frac{3}{3\omega+1}+\frac{3}{2(3\omega+2)}\right)d\omega=\int \frac{d\alpha}{\alpha},
\end{equation}
which gives
\begin{equation}\label{abel7}
	\frac{1}{2}\left(\ln \frac{|\omega(3\omega+2)|}{|3\omega+1|^2}\right)=\ln |\alpha|+const.
\end{equation}
The implicit solution to \eqref{abel4} is then expressed as 
\begin{equation}\label{sol_w}
	\frac{\omega(3\omega+2)}{(3\omega+1)^2}=C\alpha^2,
\end{equation}  
provided that $\omega \neq 0,-\frac{1}{3},-\frac{2}{3}$, and $C$ is an arbitrary nonzero constant.
Hence, with the assumption $\alpha\neq 0$, we have 
\begin{equation}
	4\alpha^2(3C\alpha^2-1)u^2+4(3C\alpha^2-1)u+3C=0,
\end{equation}
or equivalently,
\begin{equation}
	3C(\alpha')^2+4(3C\alpha^2-1)\alpha'+4\alpha^2(3C\alpha^2-1)=0.
\end{equation}
This yields
\begin{equation}\label{1stode}
\begin{split}
	\alpha'&=\frac{2(1-3C\alpha^2)\pm2\sqrt{1-3C\alpha^2}}{3C}\\
	&=\frac{(1-2C\alpha^2)\pm\sqrt{1-2C\alpha^2}}{C},
	\end{split}
\end{equation}
for a nonzero constant $C$. Since we have assumed that $v=\alpha'$ is not zero, neither is $1-2C\alpha^2$. To obtain the general solutions, we proceed to solve equation \eqref{1stode} by means of the variable separable method. We rewrite \eqref{1stode} as
\begin{equation}\label{1stode1}
\begin{split}
\frac{dx}{C}&=\frac{d\alpha}{(1-2C\alpha^2)\pm\sqrt{1-2C\alpha^2}}\\
&=\frac{(1-2C\alpha^2)\mp\sqrt{1-2C\alpha^2}}{[(1-2C\alpha^2)\pm\sqrt{1-2C\alpha^2}][(1-2C\alpha^2)\mp\sqrt{1-2C\alpha^2}]}d\alpha\\
&=\frac{(1-2C\alpha^2)\mp\sqrt{1-2C\alpha^2}}{-2C\alpha^{2}(1-2C\alpha^2)}d\alpha\\
&=\left(\frac{-1}{2C\alpha^2}\pm\frac{1}{2C\alpha^2\sqrt{1-2C\alpha^2}}\right)d\alpha.
\end{split}
\end{equation}
{\bf Case I.} If $C<0$, we use the trigonometric substitution $\sqrt{2|C|}\alpha=\tan{\theta},\ -\frac{\pi}{2}<\theta<\frac{\pi}{2}$, to get
\begin{equation}\label{1stode2}
\int \frac{d\alpha}{2C\alpha^2\sqrt{1-2C\alpha^2}}=-\frac{\sqrt{1-2C\alpha^2}}{2C\alpha}+c_{1},
\end{equation}
for some $c_{1}\in \mathbb R$. Substituting \eqref{1stode2} into \eqref{1stode1}, we obtain
\begin{equation*}
\frac{x+c_{1}}{C}=\frac{1\mp\sqrt{1-2C\alpha^2}}{2C\alpha},
\end{equation*}
that is,
\begin{equation}\label{1stode3}
2\alpha(x+c_{1})-1=\mp\sqrt{1-2C\alpha^2},
\end{equation}
for some $c_{1}\in \mathbb R$. Taking the square of both sides and noticing that $\alpha\neq 0$, we obtain
\begin{equation}\label{1stode4}
\alpha(x)=\frac{x+c_{1}}{(x+c_{1})^{2}+c_{2}},
\end{equation}
for some $c_{1},\ c_{2}\in \mathbb R$ and $c_{2}<0$. If $\alpha$ satisfies $2\alpha(x+c_{1})-1=+\sqrt{1-2C\alpha^2}$ in \eqref{1stode3}, then we have $\alpha(x+c_{1})>0$, and hence \eqref{1stode4} implies $x+c_{1}<-\sqrt{|c_{2}|}$ or $\sqrt{|c_{2}|}<x+c_{1}$. On the other hand, if $\alpha$ satisfies $2\alpha(x+c_{1})-1=-\sqrt{1-2C\alpha^2}$, then we have $\alpha(x+c_{1})<0$, and we then obtain that $-\sqrt{|c_{2}|}<x+c_{1}<\sqrt{|c_{2}|}$ by \eqref{1stode4}.\\
{\bf Case II.} If $C>0$, we use the trigonometric substitution $\sqrt{2C}\alpha=\sin{\theta},\ -\frac{\pi}{2}<\theta<\frac{\pi}{2}$, to get
\begin{equation}\label{1stode5}
\int \frac{d\alpha}{2C\alpha^2\sqrt{1-2C\alpha^2}}=-\frac{\sqrt{1-2C\alpha^2}}{2C\alpha}+c_{1},
\end{equation}
for some $c_{1}\in \mathbb R$. Substituting \eqref{1stode5} into \eqref{1stode1}, we obtain
\begin{equation*}
\frac{x+c_{1}}{C}=\frac{1\mp\sqrt{1-2C\alpha^2}}{2C\alpha},
\end{equation*}
that is,
\begin{equation}\label{1stode6}
2\alpha(x+c_{1})-1=\mp\sqrt{1-2C\alpha^2},
\end{equation}
for some $c_{1}\in \mathbb R$. Taking the square of both sides and noticing that $\alpha\neq 0$, we obtain
\begin{equation}\label{1stode7}
\alpha(x)=\frac{x+c_{1}}{(x+c_{1})^{2}+c_{2}},
\end{equation}
for some $c_{1},\ c_{2}\in \mathbb R$ and $c_{2}>0$. 

As above, while we try to get the general solutions to \eqref{lieq}, we have assumed that $\alpha'\neq 0, \alpha\neq 0, \omega\neq 0, \omega\neq-\frac{1}{3}$ and $\omega\neq-\frac{2}{3}$. Now suppose $\alpha'=0$ on an open interval, then \eqref{lieq} immediately implies $\alpha=0$ on that interval. It is easy to see that $\omega=0$ is equivalent to $\alpha=0$. Finally, since $\omega=\frac{2}{3}\alpha^{2}u$, we see that
\begin{equation*}
\omega=-\frac{1}{3}\Leftrightarrow \alpha^{2}u=\frac{-1}{2} \Leftrightarrow \alpha'=-2\alpha^{2} \Leftrightarrow\alpha(x)=\frac{1}{2(x+c_{1})}.
\end{equation*}
Similarly, we have 
\begin{equation*}
\omega=-\frac{2}{3} \Leftrightarrow\alpha(x)=\frac{1}{(x+c_{1})},
\end{equation*}
for some $c_{1}\in R$. We hence complete the proof of Theorem \ref{main031}. A similar argument establishes the following proof. 

\subsection{The proof of Theorem \ref{consta}}
Using a similar transformation between $v,u$ and $\omega$, the ODE \eqref{Codeq01} is converted to be
\begin{equation}\label{consta5}
	\frac{d\omega}{\omega(2+9\omega+9\omega^2)}=\frac{4\alpha}{4\alpha^2+c^2}d\alpha \quad(\omega=\frac{4\alpha^3+c^2\alpha}{6\alpha}u),
\end{equation}
which has the implicit solution expressed as 
\begin{equation}\label{consta_w}
	\frac{\omega(3\omega+2)}{(3\omega+1)^2}=K(4\alpha^2+c^2),
\end{equation}  
provided that $\omega \neq 0,-\frac{1}{3},-\frac{2}{3}$, and $K=e^{const}$ is arbitrary nonzero constant. If $\omega =0,-\frac{1}{3}$ or $-\frac{2}{3}$, we have the trivial solution or the special solutions derived.
Hence, with the assumption $\alpha\neq 0$, we have 
\begin{equation}\label{consta_1stode}	\alpha'=\frac{1-3K(4\alpha^2+c^2)\pm\sqrt{1-3K(4\alpha^2+c^2)}}{6K},
\end{equation}
for a nonzero constant $K$. Since we have assumed that $v=\alpha'\neq 0$, neither is $1-3K(4\alpha^2+c^2)$. In order to obtain the general solutions, we rationalize \eqref{consta_1stode} as
\begin{equation}\label{consta_1stode1}
\begin{split}
\frac{dx}{6K}&=\frac{d\alpha}{(1-3K(4\alpha^2+c^2))\pm\sqrt{1-3K(4\alpha^2+c^2)}}\\
&=\left(\frac{-1}{3K(4\alpha^2+c^2)}\pm\frac{1}{3K(4\alpha^2+c^2)\sqrt{1-3K(4\alpha^2+c^2)}}\right)d\alpha.
\end{split}
\end{equation}
Next, we will deal with integrations of both sides.
Note that 
\begin{equation}
	1-3K(4\alpha^2+c^2)=(1-3Kc^2)-12K\alpha^2 \geq 0,
\end{equation}
which implies \[1-3Kc^2 \geq 12K\alpha^2 \geq 0.\] Moreover, that $K > 0$ implies $0\leq 3K(4\alpha^2+c^2) $. We further assume that $3K(4\alpha^2+c^2) \leq 1$.
\begin{itemize}
\item If $1-3Kc^2=0$, then $3Kc^2=1$, which yields $3K(4\alpha^2+c^2)=12K\alpha^2+1 >1$, a contradiction.

	\item If $1-3Kc^2>0$, then we let
\begin{equation}\label{subs}
	\sqrt{12K}\alpha=\sqrt{1-3Kc^2}\sin \theta.
\end{equation} 
This gives
\begin{equation}
	d\alpha =\sqrt{\frac{1-3Kc^2}{12K}}\cos \theta d\theta,\quad
	1-3K(4\alpha^2+c^2) = (1-3Kc^2)\cos^2\theta.
\end{equation}
It turns out that the integration of $\frac{1}{3K(4\alpha^2+c^2)\sqrt{1-3K(4\alpha^2+c^2)}}d\alpha$ becomes
\begin{equation}
	\frac{1}{\sqrt{12K}}\int \frac{d \theta}{1-(1-3Kc^2)\cos ^2\theta}=\frac{1}{\sqrt{12K}}\left(-\frac{\tan^{-1}\left(\frac{\tan \theta}{\sqrt{3Kc^2}}\right)}{\sqrt{3Kc^2}}\right)+const.
\end{equation}
 By \eqref{subs}, we have 
 \begin{equation}
 	\frac{1}{\sqrt{12K}}\left(\frac{\tan^{-1}\left(\frac{2\alpha}{|c|\sqrt{1-3K(4\alpha^2+c^2)}}\right)}{\sqrt{3Kc^2}}\right)+const..
 \end{equation}
\end{itemize}

It is easy to see that
$$
\int \frac{d\alpha}{3K(4\alpha^2+c^2)}=\frac{1}{3K}\frac{\tan^{-1} \left(\frac{2\alpha}{c}\right)}{2c}+const..
$$
Therefore, \eqref{consta_1stode1} shows 
\begin{equation}\label{consta_1stode3}
\frac{x}{6K}=-\frac{1}{3K}\frac{\tan^{-1} \left(\frac{2\alpha}{c}\right)}{2c}\pm \frac{1}{6K|c|}\left(\tan^{-1}\left(\frac{2\alpha}{|c|\sqrt{1-3K(4\alpha^2+c^2)}}\right)\right)+C,
\end{equation}
where $C$ is a constant. Note that the sum/difference identity of $\tan(x\pm y)$ implies
\begin{equation}
\tan^{-1}A\pm\tan^{-1}B=\tan^{-1}\left(\frac{A\pm B}{1\mp AB}\right)+n\pi, \quad n\in\mathbb Z.
\end{equation}
Assuming $c>0$, \eqref{consta_1stode3} becomes
\[
-cx+C_1+n\pi=\tan^{-1}\left(\frac{2c\alpha\sqrt{1-3K(4\alpha^2+c^2)}\mp 2c\alpha}{c^2\sqrt{1-3K(4\alpha^2+c^2)}\pm 4\alpha^2}\right),
\]
which is
\[
\tan(-cx-c_1)=\frac{2c\alpha\sqrt{1-3K(4\alpha^2+c^2)}\mp 2c\alpha}{c^2\sqrt{1-3K(4\alpha^2+c^2)}\pm 4\alpha^2}.
\]
After a direct computation, the square sum of the denominator and numerator is equal to \[(4\alpha^{2}+c^{2})^{2}(1-3c^{2}K),\] and hence we have
\[\begin{split}
\sin{(-cx-c_{1})}&=\frac{2\alpha c\sqrt{1-3K(4\alpha^{2}+c^{2})}\mp 2\alpha c}{(4\alpha^{2}+c^{2})\sqrt{(1-3c^{2}K)}},\\
 \mbox{and }\cos{(-cx-c_{1})}&=\frac{c^{2}\sqrt{1-3K(4\alpha^{2}+c^{2})}\pm 4\alpha^{2}}{(4\alpha^{2}+c^{2})\sqrt{(1-3c^{2}K)}}.
\end{split}\]
If we let \[c_{2}=\pm\frac{1}{\sqrt{1-3c^{2}K}},\]
and notice that $c=2\lambda$, we compute
\[\begin{split}
&-\lambda\frac{\sin{(-cx-c_{1})}}{c_{2}-\cos{(-cx-c_{1})}}\\
=&\alpha\frac{-2c^{2}\sqrt{1-3K(4\alpha^{2}+c^{2})}\pm2c^{2}}{2c_{2}(4\alpha^{2}+c^{2})\sqrt{1-3c^{2}K}-2c^{2}\sqrt{1-3K(4\alpha^{2}+c^{2})}\mp 8\alpha^{2}}\\
=&\alpha
\end{split}\]

If $K < 0$, then $1- 3K(4\alpha^2+c^2) \geq 0$ automatically. It is easy to see that either $1- 3K(4\alpha^2+c^2) =0$ or  $1- 3K(4\alpha^2+c^2) >0$. 
 If $1-3K(4\alpha^2+c^2)=0$, then $(1-3Kc^2)+(-12K\alpha^2)=0$, which yields $1-3Kc^2=0$ since both  $(1-3Kc^2)$ and $-12K\alpha^2$ are non-negative. We use \eqref{consta_1stode} to have $\alpha'=0$, i.e., $\alpha$ is a constant w.r.t. $x$, which gives $\alpha=0$.
For the later case, $1-3K(4\alpha^2+c^2)>0$, and we further assume that $1-3Kc^2>0$, otherwise, we get $\alpha=0$. Direction calculations give the same equation \eqref{consta_1stode3}.

\subsection{The phase plane}
We remark that when $c=0$ (the case of $p$-minimal surfaces), \eqref{Codeq} is one of the so-called {\it Li\'enard equation} \cite{Burton}
\begin{equation}\label{lieq1}
\alpha_{xx}=f(\alpha,\alpha_{x})=-(6\alpha\alpha_{x}+4\alpha^{3}).
\end{equation}
If we imagine a simple dynamical system consisting of a particle of unit mass moving on the $\alpha$-axis, and if $f(\alpha,\alpha_{x})$ is the force acting on it, then \eqref{lieq1} is the equation of motion. The values of $\alpha$ (position) and $\alpha_{x}$ (velocity), which at each instant characterize the state of the system, are called its phases, and the plane of the variables $\alpha$ and $\alpha_{x}$ is called the {\bf phase plane}.
Using $v=\alpha_{x}$ to see that \eqref{lieq1} can be replaced by the equivalent system
\begin{equation}\label{lieq2}
\left\{\begin{split}
\frac{d\alpha}{dx}&=v,\\
\frac{dv}{dx}&=-(6\alpha v+4\alpha^{3}).
\end{split}\right.
\end{equation}
In general, a solution of \eqref{lieq2} is a pair of functions $\alpha(x)$ and $v(x)$ defining a curve on the phase plane. It follows from the standard theory of ODE that if $x_{0}$ is any number and $(\alpha_{0},v_{0})$ is any point in the phase plane, then there exists a unique solution $(\alpha(x),v(x))$ of \eqref{lieq2} such that $\alpha(x_{0})=\alpha_{0}$ and $v(x_{0})=v_{0}$. If this solution $(\alpha(x),v(x))$ is not a constant, then it defines a curve on the phase plane called a path of the system; otherwise, it defines a critical point. All paths together with critical points form a directed singular foliation on the phase plane with critical points as singular points of the foliation, and each path lies in a leaf of the foliation. It is easy to see that this foliation is defined by the vector field $V=(v,-(6\alpha v+4\alpha^{3}))$ and $(0,0)$ is the only critical point. We express the direction field (or the directed singular foliation) as Figure \ref{dfield} (Figure \ref{dfieldc} for $c=1.5$) as follows.

\begin{figure}[ht]
    \begin{minipage}{.49\textwidth}
   \centering
   \includegraphics[scale=0.5]{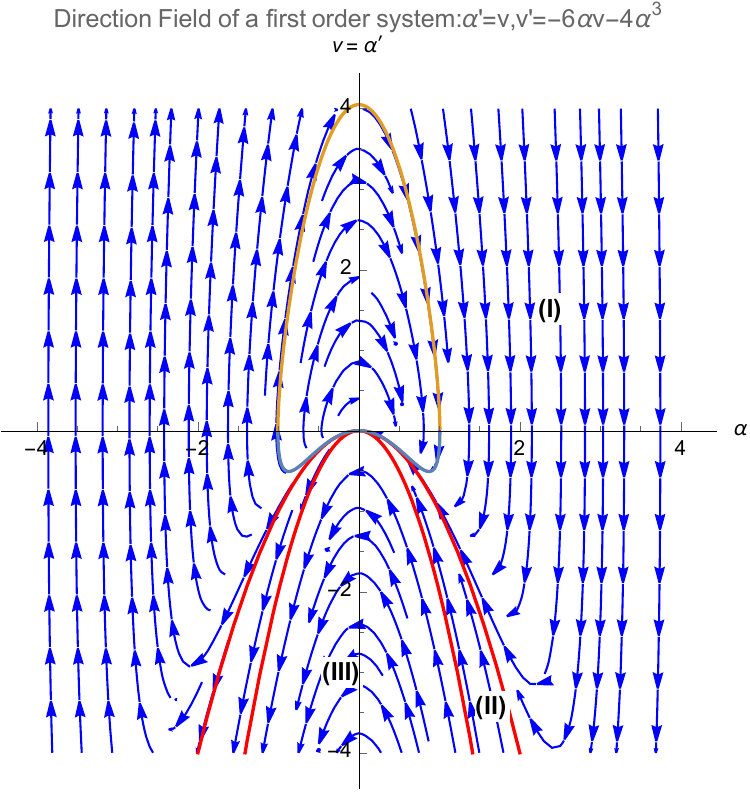}
   \caption{direction field $V$}
   \label{dfield}
      \end{minipage}
    \begin{minipage}{0.49\textwidth}
   \centering
   \includegraphics[scale=0.5]{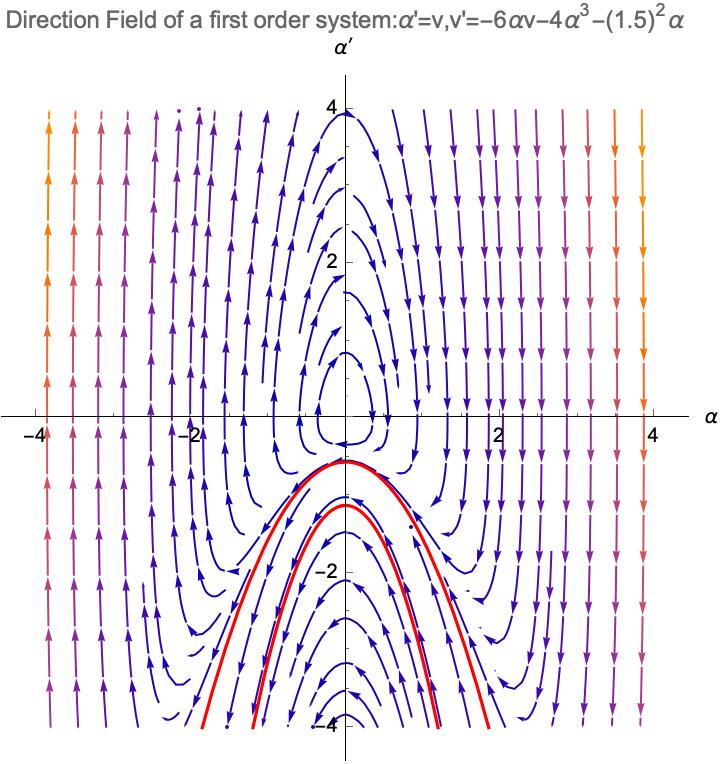}
   \caption{direction field for $c=\frac{3}{2}$}
   \label{dfieldc}
   \end{minipage}
\end{figure}

\section{The classification of constant $p$-mean curvature surfaces}
 \subsection{The classification of $p$-minimal surfaces} \label{claofpmi}
Theorem \ref{main031} suggests we divide (locally) the $p$-minimal surfaces into several classes. In terms of compatible coordinates $(x,y)$, the function $\alpha(x,y)$ is a solution
to the {\bf Codazzi-like} equation (\ref{lieq}) for any given $y$. By Theorem \ref{main031}, the function $\alpha(x,y)$ hence has one of the following forms of special types
\begin{equation*}
0,\ \frac{1}{x+c_{1}(y)},\ \frac{1}{2x+c_{1}(y)},
\end{equation*}
and general types
\begin{equation*}
\frac{x+c_{1}(y)}{(x+c_{1}(y))^{2}+c_{2}(y)},
\end{equation*}
where, instead of constants, both $c_{1}(y)$ and $c_{2}(y)$ are now functions of $y$. Notice that $c_{2}(y)\neq 0$ for all $y$. We now use the types of the function $\alpha(x,y)$ to define the types of $p$-minimal surface as follows.

\begin{de}\label{cladef} 
Locally, we say that a $p$-minimal surface is
\begin{enumerate}
\item {\bf vertical} if $\alpha$ vanishes (i.e., $\alpha(x,y)=0$ for all $x,y$).
\item of {\bf special type I} if $\alpha=\frac{1}{x+c_{1}(y)}$.
\item of {\bf special type II} if $\alpha=\frac{1}{2x+c_{1}(y)}$.
\item of {\bf general type} if $\alpha=\frac{x+c_{1}(y)}{(x+c_{1}(y))^{2}+c_{2}(y)}$ with $c_{2}(y)\neq 0$ for all $y$.
\end{enumerate}
\end{de}
We further divide $p$-minimal surfaces of general type into three classes as follows:
\begin{de}\label{cladef1}
We say that a $p$-minimal surface of {\bf general type} is
\begin{enumerate}
\item of {\bf type I} if $c_{2}(y)>0$ for all $y$.
\item of {\bf type II} if $c_{2}(y)<0$ for all $y$, and either $x<-c_{1}(y)-\sqrt{-c_{2}(y)}$ or $x>-c_{1}(y)+\sqrt{-c_{2}(y)}$.
\item of {\bf type III} if $c_{2}(y)<0$ for all $y$, and $-c_{1}(y)-\sqrt{-c_{2}(y)}<x<-c_{1}(y)+\sqrt{-c_{2}(y)}$.
\end{enumerate}
\end{de}
We notice that the {\bf type} is invariant under an action of a Heisenberg rigid motion and the regular part of a $p$-minimal surface $\Sigma\subset H_{1}$ is a union of these types of surfaces.
The corresponding paths of each type of $\alpha$ are marked on the phase plane (Figure \ref{dfield}). We express some basic facts about $p$-minimal surfaces with type as follows.
\begin{itemize}
\item If $\alpha$ vanishes, then it is part of a vertical plane.
\item The two concave downward parabolas represent 
$\alpha=\frac{1}{x+c_{1}}, \frac{1}{2x+c_{1}}$ respectively. The one for $\alpha=\frac{1}{x+c_{1}}$ is on top of the one for $\alpha=\frac{1}{2x+c_{1}}$. For surfaces of {\bf special type I}, we have that
\begin{equation}
\begin{split}
\alpha\rightarrow\left\{\begin{array}{rl}\infty,&\ \textrm{if}\ x\rightarrow-c_{1}\ \ \textrm{from the right},\\
-\infty,&\ \textrm{if}\ x\rightarrow-c_{1}\ \ \textrm{from the left};
\end{array}\right.
\end{split}
\end{equation}
and, for surfaces of {\bf special type II}, we have that
\begin{equation}
\begin{split}
\alpha\rightarrow\left\{\begin{array}{rl}\infty,&\ \textrm{if}\ x\rightarrow\frac{-c_{1}}{2}\ \ \textrm{from the right},\\
-\infty,&\ \textrm{if}\ x\rightarrow\frac{-c_{1}}{2}\ \ \textrm{from the left};
\end{array}\right.
\end{split}
\end{equation}
\item The closed curves with the origin removed correspond to the family of solutions
\begin{equation*}
\alpha(x)=\frac{x+c_{1}}{(x+c_{1})^{2}+c_{2}},	
\end{equation*}
where $c_1,c_2$ are constants and $c_2>0$, which are of {\bf type I}. There exists a zero for $\alpha$-function at $x=-c_{1}$. For surfaces of {\bf type I}, we have $|\alpha|\leq\frac{1}{2\sqrt{c_{2}}}$, so $\alpha$ is a bounded function for each fixed $y$, that is, along each path on the phase plane, $\alpha$ is bounded. Therefore, there are no singular points for surfaces of {\bf type I}.

\item The curves in between the two concave downward parabolas are of {\bf type II}. The $\alpha$-function of {\bf type II} does not have any zeros. For surfaces of {\bf type II}, it can be checked that
\begin{equation}
\begin{split}
\alpha\rightarrow\left\{\begin{array}{rl}\infty,&\ \textrm{if}\ x\rightarrow-c_{1}+\sqrt{-c_{2}}\ \ \textrm{from the right},\\
-\infty,&\ \textrm{if}\ x\rightarrow-c_{1}-\sqrt{-c_{2}}\ \ \textrm{from the left}.
\end{array}\right.
\end{split}
\end{equation}
\item The curves beneath the lower concave downward parabolas are of {\bf type III}. There exists a zero for $\alpha$-function at $x=-c_{1}$. For surfaces of {\bf type III}, we have
\begin{equation}
\begin{split}
\alpha\rightarrow\left\{\begin{array}{rl}-\infty,&\ \textrm{if}\ x\rightarrow-c_{1}+\sqrt{-c_{2}}\ \ \textrm{from the left},\\
\infty,&\ \textrm{if}\ x\rightarrow-c_{1}-\sqrt{-c_{2}}\ \ \textrm{from the right}.
\end{array}\right.
\end{split}
\end{equation}
\end{itemize}
\begin{prop}\label{indmetin1}
Suppose $\alpha(x,y)=\frac{x+c_{1}(y)}{(x+c_{1}(y))^{2}+c_{2}(y)}$, which is of {\bf general type}. Then the  explicit formula for the induced metric on a
$p$-minimal surface with this $\alpha$ as its $\alpha$-function is given by
\begin{equation}\label{foab42}
a=\frac{|\alpha|h(y)}{|x+c_{1}(y)|\sqrt{1+\alpha^{2}}},\ \ b=\frac{|\alpha|e^{k(y)}}{|x+c_{1}(y)|\sqrt{1+\alpha^{2}}},
\end{equation}
for some functions $h(y)$ and $k(y)$.
\end{prop}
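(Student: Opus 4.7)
\medskip

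The plan is to specialize the general formulas \eqref{foforb} and \eqref{fofora} to the $p$-minimal case ($H=0$) and then to compute the antiderivative $\int 2\alpha\,dx$ explicitly for the given $\alpha$. Since the proposition concerns $p$-minimal surfaces, the formula for $a$ in \eqref{fofora} simplifies drastically: the inner integral $\int H\alpha\, e^{\int 2\alpha dx}dx$ vanishes, leaving
\begin{equation*}
a=\frac{e^{-\int 2\alpha dx}}{(1+\alpha^{2})^{1/2}}\,h(y),\qquad b=\frac{e^{-\int 2\alpha dx}}{(1+\alpha^{2})^{1/2}}\,e^{k(y)},
\end{equation*}
so the task reduces to evaluating $e^{-\int 2\alpha dx}$ for $\alpha=\frac{x+c_{1}(y)}{(x+c_{1}(y))^{2}+c_{2}(y)}$.

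Next I would carry out the antiderivative by recognizing that the numerator $2(x+c_{1}(y))$ is precisely the $x$-derivative of the denominator $(x+c_{1}(y))^{2}+c_{2}(y)$. Setting $u(x,y)=(x+c_{1}(y))^{2}+c_{2}(y)$, one has $2\alpha\,dx=\frac{du}{u}$, hence
\begin{equation*}
\int 2\alpha\,dx=\ln\bigl|(x+c_{1}(y))^{2}+c_{2}(y)\bigr|,
\end{equation*}
and therefore $e^{-\int 2\alpha dx}=\frac{1}{|(x+c_{1}(y))^{2}+c_{2}(y)|}$. (The $y$-dependent constant of integration can be absorbed into $h(y)$ and $e^{k(y)}$, which is exactly the freedom those arbitrary functions provide.)

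The last step is a purely algebraic rewrite. From the definition of $\alpha$ one reads off
\begin{equation*}
|\alpha|=\frac{|x+c_{1}(y)|}{|(x+c_{1}(y))^{2}+c_{2}(y)|},\qquad\text{so}\qquad \frac{1}{|(x+c_{1}(y))^{2}+c_{2}(y)|}=\frac{|\alpha|}{|x+c_{1}(y)|}.
\end{equation*}
Substituting this into the expressions for $a$ and $b$ yields \eqref{foab42}. There is no real obstacle here; the only mildly delicate point is the bookkeeping of the absolute values and the fact that the undetermined $y$-functions in \eqref{foforb}–\eqref{fofora} are genuinely arbitrary, so no compatibility check on $h,k$ is needed beyond the already-verified Codazzi-like equation.
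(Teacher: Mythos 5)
Your proposal is correct and follows essentially the same route as the paper: choose $\ln\bigl|(x+c_{1}(y))^{2}+c_{2}(y)\bigr|$ as the antiderivative of $2\alpha$, substitute into \eqref{foforb} and \eqref{fofora} with $H=0$, and rewrite $e^{-\int 2\alpha dx}/(1+\alpha^{2})^{1/2}$ as $|\alpha|/(|x+c_{1}(y)|\sqrt{1+\alpha^{2}})$. The remark about absorbing the $y$-dependent integration constant into $h(y)$ and $e^{k(y)}$ is a sensible addition but not a departure from the paper's argument.
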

\begin{proof}If $\alpha=\frac{x+c_{1}(y)}{(x+c_{1}(y))^{2}+c_{2}(y)}$, we choose $\ln{|(x+c_{1}(y))^{2}+c_{2}(y)|}$ as an anti-derivative of $2\alpha$ with respect to $x$. Simple computations imply
\begin{equation*}
\begin{split}
\frac{e^{-\int2\alpha dx}}{(1+\alpha^2)^{\frac{1}{2}}}&=\frac{1}{\sqrt{(x+c_{1}(y))^{2}+((x+c_{1}(y))^{2}+c_{2}(y))^{2}}}\\
&=\frac{|\alpha|}{|x+c_{1}(y)|\sqrt{1+\alpha^{2}}}.
\end{split}
\end{equation*}
Substituting the above formula into (\ref{foforb}) and (\ref{fofora}), equation (\ref{foab42}) follows.
\end{proof}

Similarly, we have

\begin{prop}\label{indmetin2}
Suppose $\alpha(x,y)=\frac{1}{x+c_{1}(y)}$, which is of {\bf special type I}. Then the explicit formula for the induced metric on a
$p$-minimal surface with this $\alpha$ as its $\alpha$-function is given by
\begin{equation}\label{foab422}
a=\frac{\alpha^{2}h(y)}{\sqrt{1+\alpha^{2}}},\ \ b=\frac{\alpha^{2}e^{k(y)}}{\sqrt{1+\alpha^{2}}}.
\end{equation}
\end{prop}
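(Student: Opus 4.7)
The plan is to imitate the proof of Proposition \ref{indmetin1} verbatim, just with the simpler antiderivative that $\alpha=\tfrac{1}{x+c_1(y)}$ allows. Since the surface is $p$-minimal we have $H=0$, so the second term inside the parentheses of the general formula \eqref{fofora} drops out and $a$ and $b$ are both proportional to the single quantity $\dfrac{e^{-\int 2\alpha\,dx}}{(1+\alpha^2)^{1/2}}$.

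First I would choose a convenient anti-derivative. With $\alpha(x,y)=\tfrac{1}{x+c_1(y)}$ one has $2\alpha=\tfrac{2}{x+c_1(y)}$, so $\int 2\alpha\,dx = 2\ln|x+c_1(y)|$ is a valid anti-derivative in $x$. Exponentiating gives
\begin{equation*}
e^{\int 2\alpha\,dx}=(x+c_1(y))^2=\frac{1}{\alpha^2},\qquad e^{-\int 2\alpha\,dx}=\alpha^2.
\end{equation*}

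Next I would plug this into the common factor appearing in both \eqref{foforb} and \eqref{fofora}:
\begin{equation*}
\frac{e^{-\int 2\alpha\,dx}}{(1+\alpha^2)^{1/2}}=\frac{\alpha^2}{\sqrt{1+\alpha^2}}.
\end{equation*}
Substituting this into \eqref{foforb} immediately gives $b=\dfrac{\alpha^2 e^{k(y)}}{\sqrt{1+\alpha^2}}$. Substituting into \eqref{fofora}, and using $H=0$ to kill the $\int H\alpha e^{\int 2\alpha\,dx}\,dx$ term, yields $a=\dfrac{\alpha^2 h(y)}{\sqrt{1+\alpha^2}}$, as asserted.

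There is really no obstacle here: the only thing to be careful about is the usual freedom in the choice of anti-derivative, but any other choice of $\int 2\alpha\,dx$ differs by a function of $y$ alone and can be absorbed into the arbitrary functions $h(y)$ and $e^{k(y)}$, so the stated formulas \eqref{foab422} are unchanged. This finishes the proof.
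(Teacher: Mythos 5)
Your proof is correct and is essentially identical to the paper's: the paper's entire proof of Proposition \ref{indmetin2} consists of choosing $2\ln|x+c_{1}(y)|$ as the anti-derivative of $2\alpha$ and substituting into \eqref{foforb} and \eqref{fofora} with $H=0$, exactly as you do. Your closing remark about absorbing the ambiguity of the anti-derivative into $h(y)$ and $e^{k(y)}$ is a harmless (and accurate) addition.
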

\begin{proof}
To obtain (\ref{foab422}), we choose $2\ln{|x+c_{1}(y)|}$ as an anti-derivative of $2\alpha$ with respect to $x$.
\end{proof}

\begin{prop}\label{indmetin3}
Suppose $\alpha(x,y)=\frac{1}{2x+c_{1}(y)}$, which is of {\bf special type II}. Then the explicit formula for the induced metric on a
$p$-minimal surface with this $\alpha$ as its $\alpha$-function is given by
\begin{equation}\label{foab423}
a=\frac{|\alpha|h(y)}{\sqrt{1+\alpha^{2}}},\ \ b=\frac{|\alpha|e^{k(y)}}{\sqrt{1+\alpha^{2}}}.
\end{equation}
\end{prop}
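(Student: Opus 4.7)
The plan is to follow the exact template laid out in the proofs of Propositions \ref{indmetin1} and \ref{indmetin2}: apply the general formulas \eqref{foforb} and \eqref{fofora} with $H=0$ (since we are in the $p$-minimal case) and substitute the specific form $\alpha(x,y) = \frac{1}{2x+c_1(y)}$.

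The key computational step is choosing a convenient anti-derivative for $2\alpha$ in the variable $x$. For $\alpha = \frac{1}{2x+c_1(y)}$, I would take
\begin{equation*}
\int 2\alpha\, dx = \int \frac{2}{2x+c_1(y)}\, dx = \ln|2x+c_1(y)|,
\end{equation*}
so that
\begin{equation*}
e^{-\int 2\alpha\, dx} = \frac{1}{|2x+c_1(y)|} = |\alpha|.
\end{equation*}
Plugging this into \eqref{foforb} directly yields $b = \frac{|\alpha|\, e^{k(y)}}{\sqrt{1+\alpha^2}}$. For $a$, since $H=0$, formula \eqref{fofora} collapses to
\begin{equation*}
a = \frac{h(y)\, e^{-\int 2\alpha\, dx}}{\sqrt{1+\alpha^2}} = \frac{|\alpha|\, h(y)}{\sqrt{1+\alpha^2}},
\end{equation*}
which is exactly the claimed formula.

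There is essentially no obstacle here; the work is a one-line substitution, and the only care needed is in handling the absolute value arising from $e^{-\ln|2x+c_1(y)|}$. This is why the proofs of Propositions \ref{indmetin1} and \ref{indmetin2} are so short, and the present proposition is of the same flavor: it is just a matter of identifying the right anti-derivative of $2\alpha$ so that the exponential factor matches $|\alpha|$ (analogously to how Proposition \ref{indmetin2} uses $2\ln|x+c_1(y)|$ to produce $\alpha^2$, and Proposition \ref{indmetin1} uses $\ln|(x+c_1)^2+c_2|$ to produce $\frac{|\alpha|}{|x+c_1|}$).
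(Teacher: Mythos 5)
Your proposal is correct and matches the paper's own proof, which likewise just chooses $\ln|2x+c_{1}(y)|$ as the anti-derivative of $2\alpha$ and substitutes into \eqref{foforb} and \eqref{fofora} with $H=0$. Nothing further is needed.
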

\begin{proof}
To have (\ref{foab423}), we choose $\ln{|2x+c_{1}(y)|}$ as an anti-derivative of $2\alpha$ with respect to $x$.
\end{proof}

\subsection{Analogs for constant $p$-mean curvature surfaces}\label{norconstp}
If $H=c\neq 0$ (we assume that $c=2\lambda>0$), then the Codazzi-like equation \eqref{Codeq01} has the trivial solution $\alpha(x)=0$ and the general solution of the form
\begin{equation}\label{solofg}
\alpha(x)=\lambda\frac{\sin{(2\lambda x+c_{1}})}{c_{2}-\cos{(2\lambda x+c_{1})}},
\end{equation}
which depends on two constants $c_{1}$ and $c_{2}$.

\begin{prop}\label{metricform} For any $\alpha(x,y)=\lambda\frac{\sin{(2\lambda x+c_{1}(y)})}{c_{2}(y)-\cos{(2\lambda x+c_{1}(y))}}$, the explicit formula for the induced metric on a
constant $p$-mean curvature surface with $2\lambda$ as its $p$-mean curvature and this $\alpha$ as its $\alpha$-function is given by
\begin{equation}\label{meta}
a=-\frac{\frac{c}{2}}{(1+\alpha^{2})^{1/2}}+\frac{\frac{c}{2}c_{2}}{(c_{2}-\cos{(cx+c_{1})})(1+\alpha^{2})^{1/2}}+\frac{h(y)}{|c_{2}-\cos{(cx+c_{1})}|(1+\alpha^{2})^{1/2}},
\end{equation}
and
\begin{equation}\label{metb}
b=\frac{e^{k(y)}}{|c_{2}-\cos{(cx+c_{1})}|(1+\alpha^{2})^{1/2}},
\end{equation}
for some functions $h(y)$ and $k(y)$.
\end{prop}
\begin{proof}
Let $f(x)=\ln{|c_{2}-\cos{(cx+c_{1}}|}, c=2\lambda$. Then $f'(x)=2\alpha(x)$. If we choose $\int 2\alpha dx=f(x)$, it is easy to see 
\[e^{\int 2\alpha dx}=|c_{2}-\cos{(cx+c_{1})|},\]
and hence,
\begin{equation*}
\begin{split}
a&=\frac{e^{-\int 2\alpha dx}}{(1+\alpha^{2})^{1/2}}\left(h(y)-\int c\alpha(e^{\int 2\alpha dx}) dx\right)\\
&=\frac{1}{|c_{2}-\cos{(cx+c_{1})}|(1+\alpha^{2})^{1/2}}\left(h(y)-\int c\alpha|c_{2}-\cos{(cx+c_{1})}| dx\right),
\end{split}
\end{equation*}
where
\[\begin{split}
\int c\alpha|c_{2}-\cos{(cx+c_{1})}| dx
&=\int\frac{c^2}{2}\frac{\sin{(c x+c_{1}})}{c_{2}-\cos{(c x+c_{1})}}|c_{2}-\cos{(cx+c_{1})}|dx\\
&=\left\{\begin{array}{rl}
-\frac{c}{2}\cos{(cx+c_{1})},&c_{2}-\cos{(cx+c_{1})}>0\\
\frac{c}{2}\cos{(cx+c_{1})},&c_{2}-\cos{(cx+c_{1})}<0
\end{array}\right..
\end{split}\]
Direct computations imply 
\begin{equation}\label{}
\begin{split}
a&=\frac{h(y)}{|c_{2}-\cos{(cx+c_{1})}|(1+\alpha^{2})^{1/2}}+\frac{-\frac{c}{2}(c_{2}-\cos{(cx+c_{1})})+\frac{c}{2}c_{2}}{(c_{2}-\cos{(cx+c_{1})})(1+\alpha^{2})^{1/2}}\\
&=-\frac{\frac{c}{2}}{(1+\alpha^{2})^{1/2}}+\frac{\frac{c}{2}c_{2}}{(c_{2}-\cos{(cx+c_{1})})(1+\alpha^{2})^{1/2}}+\frac{h(y)}{|c_{2}-\cos{(cx+c_{1})}|(1+\alpha^{2})^{1/2}}
\end{split}
\end{equation}
and
\begin{equation}\label{}
b=\frac{e^{-\int 2\alpha dx}}{(1+\alpha^{2})^{1/2}}e^{k(y)}=\frac{e^{k(y)}}{|c_{2}-\cos{(cx+c_{1})}|(1+\alpha^{2})^{1/2}}.
\end{equation}
\end{proof}
\begin{exa}[Pansu Sphere]
The Pansu sphere  in \cite{CCHY18} is the union of graphs of $f$ and $-f$ given by
\[f(z)=\frac{1}{2\lambda^2}\left(\lambda|z|\sqrt{1-\lambda^2|z|^2}+\cos^{-1}(\lambda|z|)\right), |z|\leq \frac{1}{\lambda},\]
which has a constant $p$-mean curvature $c=2\lambda$.
Its $\alpha$-function is of the following form,
\[\alpha=\frac{\lambda\sin(2\lambda s)}{(1-\cos(2\lambda s))}=-\lambda\tan(\lambda s+\frac{\pi}{2}),\]
and the metric is given by
\begin{equation}\label{pansume}
a=\frac{-\lambda}{\sqrt{1+\alpha^2}} \mbox{ and } b=\frac{2\lambda^2}{\sqrt{1+\alpha^2}(1-\cos 2\lambda s)}.
\end{equation}
\end{exa}

 \section{A representation of constant $p$-mean curvature surfaces}\label{othocs}
  Let $\Sigma\subset H_{1}$ be a $p$-minimal surface. We define an {\bf orthogonal coordinate system} $(x,y)$ to be a {\bf compatible} coordinate system such that $a=0$, that is, $\hat{e}_{2}=b\frac{\partial}{\partial y}$.
 
 \begin{prop}
There always exists an orthogonal coordinate system around any regular point of a $p$-minimal surface $\Sigma$.
\end{prop}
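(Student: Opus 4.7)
The plan is to take any compatible coordinate system $(x,y)$ around a regular point $p$ and then apply a coordinate transformation of the form \eqref{cortra} to kill the function $a$. The transformation law \eqref{mettra} tells us that if $(\tilde{x},\tilde{y})=(x+\Gamma(y),\Psi(y))$ for some functions $\Gamma$ and $\Psi$ with $\Psi'\neq 0$, then the new representation $\tilde{a}$ of the induced metric satisfies
\[
\tilde{a}=a+b\,\Gamma'(y).
\]
Thus making $\tilde{a}\equiv 0$ amounts to solving $\Gamma'(y)=-a/b$, which only makes sense provided that the quotient $a/b$ is actually a function of $y$ alone.

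The key observation is that this is automatic for $p$-minimal surfaces. Indeed, from the explicit formulas \eqref{foforb} and \eqref{fofora} computed for $a$ and $b$, we have
\[
b=e^{k(y)}\frac{e^{-\int 2\alpha\,dx}}{(1+\alpha^{2})^{1/2}},\qquad
a=\frac{e^{-\int 2\alpha\,dx}}{(1+\alpha^{2})^{1/2}}\left(h(y)-\int H\alpha\,e^{\int 2\alpha\,dx}\,dx\right),
\]
for some functions $h(y)$, $k(y)$. Setting $H\equiv 0$ we therefore get
\[
\frac{a}{b}=h(y)\,e^{-k(y)},
\]
which is indeed a function of $y$ alone.

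With this in hand, the proof is straightforward. First pick any compatible coordinate system $(x,y)$ on a neighborhood $U$ of $p$. Let $\Gamma(y)$ be any antiderivative of $-h(y)e^{-k(y)}$, and define the coordinate transformation
\[
\tilde{x}=x+\Gamma(y),\qquad \tilde{y}=y.
\]
Since $\Psi(y)=y$ satisfies $\Psi'(y)=1\neq 0$, this is a legitimate transformation of the form \eqref{cortra}, and hence $(\tilde{x},\tilde{y})$ is again a compatible coordinate system (with the same characteristic direction $e_{1}=\partial/\partial\tilde{x}$). By \eqref{mettra},
\[
\tilde{a}=a+b\,\Gamma'(y)=a-b\cdot\frac{a}{b}=0,
\]
so $(\tilde{x},\tilde{y})$ is an orthogonal coordinate system around $p$, as desired.

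There is essentially no obstacle here beyond the observation that $a/b$ loses all $x$-dependence exactly because the $p$-minimality condition $H=0$ kills the only $x$-dependent term in \eqref{fofora}. For a surface of nonzero constant $p$-mean curvature, the integral $\int H\alpha\,e^{\int 2\alpha\,dx}\,dx$ generally depends nontrivially on $x$, so this construction would fail, which explains why orthogonal coordinates are special to the $p$-minimal setting.
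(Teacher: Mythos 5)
Your proof is correct and follows essentially the same route as the paper's: observe that $H=0$ forces $a/b=h(y)e^{-k(y)}$ to depend only on $y$ via \eqref{foforb} and \eqref{fofora}, then absorb it by a shift $\tilde{x}=x+\Gamma(y)$ with $\Gamma'=-a/b$ using the transformation law \eqref{mettra}. The only cosmetic difference is that you fix $\Psi(y)=y$ while the paper leaves $\Psi$ general, which changes nothing.
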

 \begin{proof}
 Suppose that $p\in\Sigma$ is a regular point and $(x,y)$ is an arbitrary compatible coordinate system around $p$. Since $H=0$, equations (\ref{foforb}) and (\ref{fofora}) imply that the ratio $\frac{-a}{b}=\frac{-h(y)}{e^{g(y)}}$ is just a function of $y$. 
Now we define another compatible coordinates $(\tilde{x},\tilde{y})$ by
\begin{equation*}
(\tilde{x},\tilde{y})=(x+\Gamma(y), \Psi(y)),
\end{equation*}
for some functions $\Gamma(y)$ and $\Psi(y)$ such that $\Gamma'(y)=\frac{-a}{b}$. By the transformation law (\ref{mettra}) of the representation of the induced metric, we have $\tilde{a}=0$. This means that $(\tilde{x},\tilde{y})$ are orthogonal coordinates around $p$. 
\end{proof}

\subsection{The proof of Theorem \ref{main041}}
It will be suitable to choose an orthogonal coordinate system $(U; x,y)$ to study a $p$-minimal surface. One sees easily from \eqref{cortra} and \eqref{mettra} that any two orthogonal coordinate systems $(x,y)$ and $(\tilde{x},\tilde{y})$ are transformed by
\begin{equation}
\tilde{x}=x+C,\ \ \ \tilde{y}=\Psi(y),
\end{equation}
for a constant $C$ and a function $\Psi(y)$. That is, the orthogonal coordinate systems are determined, up to a constant $C$ on the coordinate $x$ and scaling on the coordinate $y$. The transformation law of the representation of the induced metric hence reduces to
\begin{equation}\label{mettra2}
\tilde{a}=a=0,\ \tilde{b}=b\Psi'(y).
\end{equation}
In terms of orthogonal coordinate systems, the integrability condition hence reads 
\begin{equation}\label{intcon3}
\begin{split}
-\frac{b_{x}}{b}&=2\alpha+\frac{\alpha\alpha_{x}}{1+\alpha^2},\\
\alpha_{xx}&+6\alpha\alpha_{x}+4\alpha^{3}=0.
\end{split}
\end{equation}
 Then the $\alpha$-function determines  the metric representation $b$, and hence a $p$-minimal surface, up to a positive function $e^{k(y)}$ as (\ref{foforb}) specified (or see (\ref{foab42}),(\ref{foab422}) and (\ref{foab423}) ). Therefore, from the transformation law of the induced metric (\ref{mettra2}), we are able to choose another orthogonal coordinate system $(\tilde{x},\tilde{y})=\phi(x,y)$ with $\Psi$ satisfying $e^{k(y)}\Psi^{'}=1$. 
 That is, we can further normalize $b$ such that $k(\tilde{y})=0$ for each type, no matter it is special or general. Here $k(\tilde{y})$ is the function $k$ in the numerator of $\tilde{b}$ (see \eqref{foab42},\eqref{foab422} and \eqref{foab423}) with $\alpha,x,y$ replaced by $\tilde{\alpha},\tilde{x},\tilde{y}$. In fact, for a general type (the special types are similar), it is possible to choose another orthogonal coordinate system $(\tilde{x},\tilde{y})=\phi(x,y)$ such that
 \begin{equation*}
 \phi^{*}\tilde{b}=\frac{|\alpha|}{|x+c_{1}(y)|\sqrt{1+\alpha^{2}}}.
 \end{equation*} 
One sees that such orthogonal coordinate systems are unique up to a translation on the two variables $x$ and $y$. In other words, there are constants $C_{1}$ and $C_{2}$ such that 
 \begin{equation}
(\tilde{x},\tilde{y})=\phi(x,y)=(x+C_{1},y+C_{2}).
 \end{equation}
We call such an orthogonal coordinate system a {\bf normal coordinate system}. Indeed, since $\phi^{*}\tilde{\alpha}=\alpha$, Definition \ref{cladef} indicates the following transformation law for $c_{1}(y)$ and $c_{2}(y)$ functions
\begin{equation}
\begin{split}
\tilde{c}_{1}(\tilde{y})&=c_{1}(\tilde{y}-C_{2})-C_{1},\textrm{ for special type I},\\
\tilde{c}_{1}(\tilde{y})&=c_{1}(\tilde{y}-C_{2})-2C_{1},\textrm{ for special type II, and}\\
\tilde{c}_{1}(\tilde{y})&=c_{1}(\tilde{y}-C_{2})-C_{1},\ \tilde{c}_{2}(\tilde{y})=c_{2}(\tilde{y}-C_{2}),\textrm{ for general type},\\
\end{split}
\end{equation}
where $\tilde{c}_{1}(\tilde{y})$ and $\tilde{c}_{2}(\tilde{y})$ are with respect to $\tilde{\alpha}$. Namely, $c_{2}(y)$ is unique up to a translation on $y$, and 
$c_{1}(y)$ is unique up to a translation on $y$ and its image as well. We denote these two unique functions $c_{1}(y)$ and $c_{2}(y)$ by $\zeta_{1}(y)$ and $\zeta_{2}(y)$, respectively. We then complete the proof of Theorem \ref{main041}.\\

 Both two functions $\zeta_{1}(y)$ and $\zeta_{2}(y)$ are invariants under a Heisenberg rigid motion. Therefore we call them $\zeta_{1}$- and $\zeta_{2}$-invariants, respectively. In terms of $\zeta_{1}$ and $\zeta_{2}$, we thus have the version of the fundamental theorem for $p$-minimal surfaces in $H_{1}$ (Theorem \ref{main051}).

\subsection{The proof of Theorem \ref{main051}}
Given $\zeta_{1}(y)$, for (1) in Theorem \ref{main051}, we define $\alpha,a,b$ on $U$ by
\[\alpha=\frac{1}{x+\zeta_{1}(y)}, a=0\ \textrm{and}\ b=\frac{\alpha^2}{\sqrt{1+\alpha^{2}}}.\]
Notice that $(e,f)$ needs to be chosen so that $(e,f)\times(c,d)$ {\bf does not contain the zero set} of $x+\zeta_{1}(y)$.
Then they satisfy the integrability condition \eqref{intcon2} with $c=0$, and hence $U$ together with $\alpha,a,b$ can be embedded into $H_{1}$ to be a $p$-minimal surface with $\alpha$ as its $\alpha$-function, and the induced metric  $a,b$. Moreover the characteristic direction $e_{1}=\frac{\partial}{\partial x}$. From the type of $\alpha$, this minimal $p$-surface is of special type I. In view of $a=0$ and $b=\frac{\alpha^2}{\sqrt{1+\alpha^{2}}}$, we see that the coordinates $(x,y)$ are a normal coordinate system. Therefore $\zeta_{1}(y)$ is the corresponding $\zeta_{1}$-invariant. The uniqueness follows from the fundamental theorem for surfaces in $H_{1}$ or theorem \ref{main041}. This completes the proof of (1) for the special type I. Both proofs of (1) for the special type II and of (3) are similar with $(e,f)$ chosen according to their types. For the special type II, note that $(e,f)$ needs be chosen so that $(e,f)\times(c,d)$ {\bf does not contain the zero set} of $2x+\zeta_{1}(y)$, and we define $\alpha,a,b$ on $U$ by
\[\alpha=\frac{1}{2x+\zeta_{1}(y)}, a=0\ \textrm{and}\ b=\frac{|\alpha|}{\sqrt{1+\alpha^{2}}}.\]
For (3), we see that $(e,f)$ needs be chosen so that $(e,f)\times(c,d)$ {\bf contains no the zero set} of $(x+\zeta_{1}(y))^{2}+\zeta_{2}(y)$, and we define $\alpha,a,b$ on $U$ by
\[\alpha=\frac{x+\zeta_{1}(y)}{(x+\zeta_{1}(y))^{2}+\zeta_{2}(y)}, a=0\ \textrm{and}\ b=\frac{|\alpha|}{|x+\zeta_{1}(y)|\sqrt{1+\alpha^{2}}}.\]
Thus we complete the proof of Theorem \ref{main051}.\\

We remark that, in terms of normal coordinates $(x,y)$, the co-frame formula \eqref{indcof} reads
\begin{equation}\label{indcof1}
\begin{split}
\hat{\omega}^{1}&=dx-\frac{a}{b}dy=dx,\\
\hat{\omega}^{2}&=\frac{1}{b}dy,
\end{split}
\end{equation}
and hence the induced metric $I$ (the first fundamental form) reads
\begin{equation}\label{indmet}
\begin{split}
I&=\hat{\omega}^{1}\otimes\hat{\omega}^{1}+\hat{\omega}^{2}\otimes\hat{\omega}^{2}=dx\otimes dx+\frac{1}{b^2}dy\otimes dy,\\
&=\left\{\begin{array}{ll}
dx\otimes dx+\big[(x+\zeta_{1}(y))^{2}+(x+\zeta_{1}(y))^{4}\big]dy\otimes dy,&\textrm{for {\bf special type I}},\\
&\\
dx\otimes dx+\big[1+(2x+\zeta_{1}(y))^{2}\big]dy\otimes dy,&\textrm{for {\bf special type II}},\\
&\\
dx\otimes dx+\big[(x+\zeta_{1}(y))^{2}+[(x+\zeta_{1}(y))^{2}+\zeta_{2}(y)]^{2}\big]dy\otimes dy,&\textrm{for {\bf general type}}.\\
\end{array}\right.
\end{split}
\end{equation}
From \eqref{indmet}, we see immediately that the induced metric $I$ degenerates on the singular set $\{(x,y)\ |\ x+\zeta_{1}(y)=0\}$ for surfaces of special type I. Therefore, it cannot extend smoothly through the singular set. On the other hand, this phenomenon does not happen for both special type II and general type.

\subsection{The maximal $p$-minimal surfaces and the proof of Theorem \ref{main061}}\label{mami}
From the proof of Theorem \ref{main051}, it is clear to see that 
\begin{itemize}
\item for the special type I, the open rectangle $U$ in (1) of Theorem \ref{main051} can be extended to be either 
\begin{equation}
\begin{split}
U_{I}^{-}&=\{(x,y)\in\R^2\ |\ y\in(c,d),\ x+\zeta_{1}(y)<0\},\ \textrm{or}\\
U_{I}^{+}&=\{(x,y)\in\R^2\ |\ y\in(c,d),\ x+\zeta_{1}(y>0\},
\end{split}
\end{equation}
which depends on that $U$ is originally contained in $U_{I}^{-}$ or $U_{I}^{+}$. Notice that the embedding $X$ might be just extended to be an immersion. Since both $U_{I}^{-}$ and $U_{I}^{+}$ are connected and simply connected, the immersion $X$ is unique, up to a Heisenberg rigid motion. We denote these two $p$-minimal surfaces of {\bf special type I} by $S_{I}^{-}(\zeta_{1})=X(U_{I}^{-})$ and $S_{I}^{+}(\zeta_{1})=X(U_{I}^{+})$. From \eqref{indmet}, we see that the induced metric $I$ degenerates on the singular set $\{(x,y)\in\R^2\ |\ y\in(c,d),\ x+\zeta_{1}(y)=0\}$.
\item for the special type II, the open rectangle $U$ in (2) of Theorem \ref{main051} can be extended to be either 
\begin{equation}
\begin{split}
U_{II}^{-}&=\{(x,y)\in\R^2\ |\ y\in(c,d),\ 2x+\zeta_{1}(y)<0\},\ \textrm{or}\\
U_{II}^{+}&=\{(x,y)\in\R^2\ |\ y\in(c,d),\ 2x+\zeta_{1}(y>0\},
\end{split}
\end{equation}
which depends on the fact that $U$ is originally contained in $U_{II}^{-}$ or $U_{II}^{+}$. The embedding $X$ might be extended to be an immersion. Since both $U_{II}^{-}$ and $U_{II}^{+}$ are connected and simply connected, the immersion $X$ is unique, up to a Heisenberg rigid motion. We denote these two $p$-minimal surfaces of {\bf special type II} by $S_{II}^{-}(\zeta_{1})=X(U_{II}^{-})$ and $S_{II}^{+}(\zeta_{1})=X(U_{II}^{+})$.
\item when $\zeta_{2}(y)>0$ for all $y\in(c,d)$, since there exist no zeros of $(x+\zeta_{1}(y))^{2}+\zeta_{2}(y)=0$, the open rectangle $U$ in (3) can be extended to be the product space 
\[V_{I}=\R\times(c,d).\] 
Since the extended immersion $X$ is unique, up to a Heisenberg rigid motion, we denote the $p$-minimal surface of {\bf type I} by $\Sigma_{I}(\zeta_{1},\zeta_{2})=X(V_{I})$.
\item when $\zeta_{2}(y)<0$ for all $y\in(c,d)$, since the zero set $(x+\zeta_{1}(y))^{2}+\zeta_{2}(y)=0$ consists of two separated curves defined by $x+\zeta_{1}(y)+\sqrt{-\zeta_{2}(y)}=0$ and $x+\zeta_{1}(y)-\sqrt{-\zeta_{2}(y)}=0$, respectively, the open rectangle $U$ in (3)  can be extended to be one of the following three domains:
\begin{equation}
\begin{split}
V_{II}^{-}&=\{(x,y)\in\R^2\ |\ y\in(c,d), x<-\zeta_{1}(y)-\sqrt{-\zeta_{2}(y)}\},\\
 V_{II}^{+}&=\{(x,y)\in\R^2\ |\ y\in(c,d), x>-\zeta_{1}(y)+\sqrt{-\zeta_{2}(y)}\},\ \ \textrm{and}\\
V_{III}&=\{(x,y)\in\R^2\ |\ y\in(c,d), -\zeta_{1}(y)-\sqrt{-\zeta_{2}(y)}<x<-\zeta_{1}(y)+\sqrt{-\zeta_{2}(y)}\},
\end{split}
\end{equation}
Since the extended immersion $X$ is unique, up to a Heisenberg rigid motion, we denote these two $p$-minimal surfaces of {\bf type II} by $\Sigma_{II}^{-}(\zeta_{1},\zeta_{2})=X(V_{II}^{-})$ and $\Sigma_{II}^{+}(\zeta_{1},\zeta_{2})=X(V_{II}^{+})$, and the $p$-minimal surface of {\bf type III} by $\Sigma_{III}(\zeta_{1},\zeta_{2})=X(V_{III})$.
\end{itemize}

We see that $\zeta_{1}$-invariant is the only invariant for $p$-minimal surfaces of special types, and $\zeta_{1}$- and $\zeta_{2}$-invariants are the only two invariants for general type. From the above, Theorem \ref{main061} immediately follows.

\subsection{Symmetric $p$-minimal surfaces}
 A $p$-minimal surface is called symmetric if $\zeta_{1}$-invariant is a constant for the special types; both $\zeta_{1}$- and 
$\zeta_{2}$-invariants are constants for the general types. Since $\zeta_{1}$, up to a translation on its image, is an invariant, we presently have

\begin{thm}\label{main06}
All symmetric $p$-minimal surfaces of the same special type are locally congruent to one another, whereas for the general type, locally there is 
 a family of symmetric $p$-minimal surfaces, depending on a parameter on $\mathbb R$.
\end{thm}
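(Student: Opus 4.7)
The plan is to leverage the correspondence between $p$-minimal surfaces (modulo Heisenberg rigid motion) and their normalized $\zeta$-invariants provided by Theorem \ref{main041} and Theorem \ref{main051}. The essential point is the precise statement of the ambiguity in the normalization: $\zeta_{1}(y)$ is determined only up to a translation on $y$ \emph{and} a translation of its image, whereas $\zeta_{2}(y)$ is determined only up to a translation on $y$. Once this correspondence is in hand, classifying \emph{symmetric} surfaces reduces to counting equivalence classes of constant invariants under these admissible changes, which is essentially a bookkeeping exercise.

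For a symmetric surface of special type I (respectively, special type II), the only invariant is $\zeta_{1}\equiv c_{1}$ for some $c_{1}\in\R$. Applying the admissible translation of the image of $\zeta_{1}$, any constant $c_{1}$ can be sent to any other constant $c_{1}'$. Hence there is a single equivalence class of $\zeta_{1}$-invariants among constant functions, and Theorem \ref{main051} then produces a single congruence class of $p$-minimal surfaces in each special type. This is exactly the assertion that all symmetric $p$-minimal surfaces of a given special type are locally congruent.

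For the general type, a symmetric surface has $\zeta_{1}\equiv c_{1}$ and $\zeta_{2}\equiv c_{2}$ with $c_{2}\neq 0$. Exactly as before, $c_{1}$ can be absorbed into a translation of the image of $\zeta_{1}$, so without loss of generality $\zeta_{1}\equiv 0$. The constant $c_{2}$, however, is intrinsic: the only admissible change for $\zeta_{2}$ is a translation of $y$, which acts trivially on constant functions. Therefore distinct values $c_{2}\in\R\setminus\{0\}$ yield genuinely distinct invariant pairs, and by Theorem \ref{main051} they yield a one-parameter family of mutually noncongruent symmetric general-type $p$-minimal surfaces, parametrized by the real number $c_{2}$.

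The main subtlety I anticipate is confirming that ``translation of the image of $\zeta_{1}$'' really corresponds to a Heisenberg rigid motion between the two surfaces, not merely to a relabeling of the invariant. I would address this by tracing back through the proof of Theorem \ref{main041}: a shift $(\tilde{x},\tilde{y})=(x+C_{1},y+C_{2})$ in the normal coordinate system produces exactly the stated shift in $\zeta_{1}$, and via the uniqueness clause of the fundamental theorem (Theorem \ref{fudthm}) such a coordinate change on the abstract surface is realized by an ambient Heisenberg rigid motion on its image in $H_{1}$. No other step should involve nontrivial computation.
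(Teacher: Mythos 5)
Your argument is correct and follows essentially the same route as the paper, which derives the theorem directly from the normalization ambiguity recorded in Theorem \ref{main041} (namely that $\zeta_{1}$ is an invariant only up to a translation on $y$ and on its image, while $\zeta_{2}$ is invariant up to a translation on $y$ alone, hence genuinely fixed when constant). Your added care in checking that the shift of $\zeta_{1}$'s image is realized by an ambient Heisenberg rigid motion via the uniqueness clause of the fundamental theorem is exactly the implicit content of the paper's one-line justification.
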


\subsection{The normalization of constant $p$-mean curvature surfaces}\label{norconstp1}
Unlike $p$-minimal surfaces, Proposition \ref{metricform} indicates that normalizing $a$ to be zero is not applicable. Therefore, we would like to normalize $a$ and $b$ so that they look like the induced metric of the Pansu sphere given in \eqref{pansume} as possible as we can. Let $(\tilde{x},\tilde{y})=(x+\Gamma(y),\Psi(y))$ be another compatible coordinates. The transformation laws imply
\begin{equation}
\begin{split}
\tilde{a}&=a+b\Gamma'(y)\\
&=-\frac{\frac{c}{2}}{(1+\alpha^{2})^{1/2}}+\frac{\pm(\lambda c_{2})+h(y)+e^{k(y)}\Gamma'(y)}{|c_{2}-\cos{(cx+c_{1})}|(1+\alpha^{2})^{1/2}},
\end{split}
\end{equation} 
where we have the sign ''$-$'' if $c_{2}-\cos{(cx+c_{1})}<0$. Therefore, if we can choose $\Gamma(y)$ such that $\pm(\lambda c_{2})+h(y)+e^{k(y)}\Gamma'(y)=0$, we have 
\begin{equation}
\tilde{a}=-\frac{\lambda}{(1+\alpha^{2})^{1/2}}.
\end{equation}
Similarly,
\begin{equation}
\tilde{b}=b\Psi'(y)=\frac{e^{k(y)}\Psi'(y)}{|c_{2}-\cos{(cx+c_{1})}|(1+\alpha^{2})^{1/2}},
\end{equation} 
which implies that if $\Psi(y)$ is chosen to be $e^{k(y)}\Psi'(y)=2\lambda^{2}$, we obtain
\begin{equation}
\tilde{b}=\frac{2\lambda^{2}}{|c_{2}-\cos{(cx+c_{1})}|(1+\alpha^{2})^{1/2}},
\end{equation}
 It is easy to see that such a coordinate system $(\tilde{x},\tilde{y})$ is unique up to a translation, i.e., $(\tilde{x},\tilde{y})=(x+C_{1},y+C_{2})$ for some constants $C_{1},C_{2}$. We call it {\it the normal coordinate system}.

\begin{thm}\label{cominv}
In normal coordinates $(x,y)$, the functions $c_{1}(y)$ and $c_{2}(y)$ in the expression $\alpha(x)=\lambda\frac{\sin{(2\lambda x+c_{1}})}{c_{2}-\cos{(2\lambda x+c_{1})}}$ are unique in the following sense: up to a translation on $y$, $c_{2}(y)$ is unique up to a sign; and $c_{1}(y)$ is unique up to a constant.
We denote these two unique functions by
\[\zeta_{1}(y)=c_{1}(y),\ \zeta_{2}(y)=c_{2}(y).\]
Therefore, $\{\zeta_{1}(y), \zeta_{2}(y)\}$ is a {\bf complete} set of invariants for those surfaces ($\alpha$ not vanishing).
\end{thm}

\section{Examples of $p$-minimal surfaces}\label{examples}

\subsection{Examples of special type I}\label{model1}
The following is a family of $p$-minimal surfaces. They are defined by the graphs of 
\begin{equation}
\label{pmin1}
u=Ax+By+C,
\end{equation}
for some real constants $A,B$ and $C$. It is easy to see that $(-B,A,C)$ or $(x,y)=(-B,A)$ is the only singular point of the graph of $u=Ax+By+C$.
\begin{lem}\label{le01}
The graph defined by \eqref{pmin1} is congruent to the graph of $u=0$.  
\end{lem}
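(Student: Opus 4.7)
The plan is to exhibit an explicit Heisenberg rigid motion carrying the graph $u=0$ onto the graph $u=Ax+By+C$. The natural candidate is a left translation, since every left translation $L_{(p,q,r)}(x,y,z)=(p,q,r)\circ(x,y,z)$ preserves the contact form $\Theta$ and the CR structure $J$, hence lies in $PSH(1)$.

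The main step is to compute the image of $\{u=0\}$ under $L_{(p,q,r)}$. If $(x,y,0)$ lies on the graph $u=0$, then
\begin{equation*}
L_{(p,q,r)}(x,y,0)=(x+p,\ y+q,\ r+qx-py).
\end{equation*}
Setting $X=x+p$, $Y=y+q$, $Z=r+qx-py$, one sees that $Z=qX-pY+r$, so $L_{(p,q,r)}$ sends the plane $\{u=0\}$ to the graph of $u=qx-py+r$ in the new coordinates.

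To match the target $u=Ax+By+C$, I would choose the parameters $p=-B$, $q=A$, $r=C$; this gives the Heisenberg translation $L_{(-B,A,C)}$, which by the computation above maps the graph of $u=0$ bijectively onto the graph of $u=Ax+By+C$. Since $L_{(-B,A,C)}$ is a pseudohermitian transformation, the two graphs are congruent in the sense of this paper.

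There is no real obstacle here: the only subtlety is selecting the correct element of $H_1$ so that the twist term $qx-py$ coming from the non-abelian group law produces precisely the linear form $Ax+By$, and this is resolved by the sign choice $p=-B$, $q=A$. No rotation is needed because left translation alone already produces every plane of the form $z=\text{linear}(x,y)+\text{const}$.
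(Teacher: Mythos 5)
Your proof is correct and is essentially the paper's argument run in the opposite direction: the paper left-translates the graph of $u=Ax+By+C$ by $(B,-A,-C)$ onto $\{u=0\}$, while you left-translate $\{u=0\}$ by the inverse element $(-B,A,C)$ onto the graph, and your computation of the twist term $qx-py$ matches the group law exactly.
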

\begin{proof}
After the action of the left translation by $(B,-A,-C)$, we have
\begin{equation*}
\begin{split}
(B,-A,-C)(x,y,u)&=(x+B,y-A,u-C-Ax-By)\\
&=(x+B,y-A,0).
\end{split}
\end{equation*}
This completes the proof.
\end{proof}

\begin{exa}\label{exa72}
The p-minimal surface defined by the graph of $u=0$ corresponds to $\alpha=\frac{1}{r}$, where $r=\sqrt{x^2+y^2}$. Indeed, let us consider a surface defined by 
\[X: (x,y) \rightarrow (x,y,0).\]	
The horizontal normal can be calculated as
\begin{equation}\label{spetye11}
e_2=\frac{(u_x-y)}{r}\overset{\circ}{e_1}+\frac{(u_y+x)}{r}\overset{\circ}{e_2}=\frac{-y}{\sqrt{x^2+y^2}}\overset{\circ}{e_1}+\frac{x}{\sqrt{x^2+y^2}}\overset{\circ}{e_2},
\end{equation}
and then
\begin{equation}\label{spetye12}
e_1=\frac{x}{\sqrt{x^2+y^2}}\overset{\circ}{e_1}+\frac{y}{\sqrt{x^2+y^2}}\overset{\circ}{e_2}=\frac{x}{\sqrt{x^2+y^2}}\frac{\partial }{\partial x}+\frac{y}{\sqrt{x^2+y^2}}\frac{\partial}{\partial y}.
\end{equation}
For the $\alpha$-function, making use of 
$$
\alpha e_2+T=\alpha(\frac{-y}{\sqrt{x^2+y^2}},\frac{x}{\sqrt{x^2+y^2}},\frac{-(x^2+y^2)}{\sqrt{x^2+y^2}})+(0,0,1).
$$
to derive $\alpha=\frac{1}{\sqrt{x^2+y^2}}$,
and hence $(0,0)$ is the only singular point. Notice that $(x,y)$ is not a compatible coordinate system. 
In terms of the polar coordinates $(r,\theta)$ with the coordinates transformation $x=r\cos{\theta}$ and $y=r\sin{\theta}$, that is, we consider the re-parametrization
\[X: (r,\theta) \rightarrow (r\cos{\theta},r\sin{\theta}, 0).\]
It represents 
\begin{equation}
X_{r}=(\cos{\theta},\sin{\theta},0),\  X_{\theta}=(-r\sin{\theta},r\cos{\theta},0).
\end{equation}
From \eqref{spetye12}, it is easy to see that $e_{1}=X_{r}=\frac{\partial}{\partial r}$, and thus $(r,\theta)$ is a compatible coordinate system. For the $\alpha$-function and the induced metric $a$ and $b$, we solve the equation
\[ \frac{\alpha e_{2}+T}{\sqrt{1+\alpha^{2}}}=aX_{r}+bX_{\theta}\]
to get $e_{2}=(-\sin{\theta},\cos{\theta},-r)$ from \eqref{spetye11}, and to obtain
$$
\alpha=\frac{1}{r}, a=0, b=\frac{\alpha^{2}}{\sqrt{1+\alpha^{2}}},
$$
which, from the formula of $b$, implies that the polar coordinates $(r,\theta)$ is a {\bf normal coordinate system}. Since $\alpha=\frac{1}{r}$, the surface $X$ is of special type I with the $\zeta_{1}(\theta)=0$ as $\zeta_{1}$-invariant, and hence $X$ is a symmetric $p$-minimal surface. 

\begin{figure}[ht]
 \begin{minipage}{.45\textwidth}
   \centering
   \includegraphics[scale=0.55]{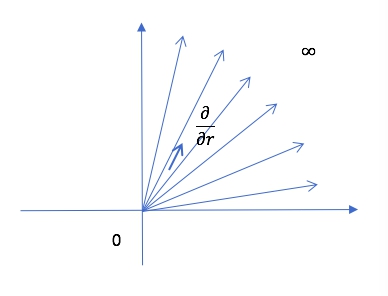}
   \caption{The characteristic direction field of $X$}
   \label{fig2}
   \end{minipage}
   \begin{minipage}{.45\textwidth}
      \centering
   \includegraphics[scale=0.4]{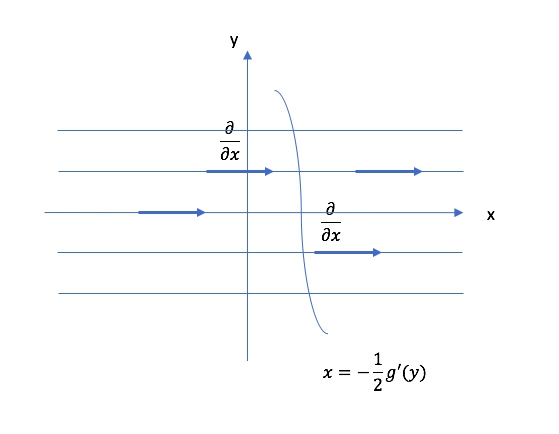}
   \caption{$e_{1}=\frac{\partial}{\partial x}$ for $x>-\frac{1}{2}g'(y)$}
   \label{fig3}
   \end{minipage}
\end{figure}
\end{exa}

Given Theorem \ref{main06}, we immediately have the following theorem.

\begin{thm}\label{main07}
A {\bf symmetric} $p$-minimal surface of {\bf special type I} is locally congruent to the graph of $u=0$.
\end{thm}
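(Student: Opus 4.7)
The plan is to combine the preceding example with Theorem \ref{main06} (the general congruence statement for symmetric $p$-minimal surfaces of the same special type). First, the example immediately before this theorem computes, for the graph $u=0$ re-parametrized in polar coordinates $(r,\theta)$, that the $\alpha$-function is $\alpha=1/r$ and the induced metric is $a=0$, $b=\alpha^{2}/\sqrt{1+\alpha^{2}}$. Comparing with the normal form in Theorem \ref{main041}(1), one reads off $\zeta_{1}(\theta)\equiv 0$; in particular $(r,\theta)$ is a normal coordinate system and the graph of $u=0$ qualifies as a symmetric $p$-minimal surface of special type I.

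Now suppose $\Sigma$ is any symmetric $p$-minimal surface of special type I, so that in some normal coordinate system its $\zeta_{1}$-invariant is a constant $c\in\R$. By the final clause of Theorem \ref{main041}, $\zeta_{1}$ is uniquely determined only up to a translation on $y$ and on its image; in particular, the constant $c$ may be translated to $0$ while remaining a normal coordinate system. Hence $\Sigma$ and the graph of $u=0$ share the same $\zeta_{1}$-invariant after an allowable normalization.

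At this point, the uniqueness statement in Theorem \ref{main051}(1), i.e.\ the fundamental theorem for $p$-minimal surfaces of special type I, applies: two $p$-minimal surfaces of special type I with the same $\zeta_{1}$-invariant are locally congruent by a Heisenberg rigid motion. This is exactly the content (for this particular type) of Theorem \ref{main06}, which the author has already stated. Applying it with $\Sigma$ and $\{u=0\}$ concludes the proof.

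The only substantive point to watch is the quantifier subtlety in "symmetric": the definition only requires $\zeta_{1}$ to be a constant, not specifically $0$. The key observation is therefore that the translation freedom in $\zeta_{1}$ (both on the source $y$ and on its image) built into Theorem \ref{main041} renders all constant values of $\zeta_{1}$ equivalent, so that no generality is lost by matching against the single model surface $u=0$. No nontrivial calculation should be needed beyond invoking the already-established fundamental theorem and the explicit model example.
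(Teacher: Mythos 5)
Your proposal is correct and follows essentially the same route as the paper: identify the graph of $u=0$ as the symmetric special type I model with $\zeta_{1}\equiv 0$ via the polar-coordinate example, observe that the translation freedom in $\zeta_{1}$ makes every constant value equivalent, and invoke the completeness of the $\zeta_{1}$-invariant (Theorem \ref{main06} / Theorem \ref{main051}) to conclude local congruence. The paper's own proof is just a two-sentence version of exactly this argument.
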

\begin{proof}
This is because that $\zeta_{1}$ is constant for a symmetric $p$-minimal surface of {\bf special type I} and the function $\zeta_{1}$, up to a constant, is a complete invariant.
\end{proof}

\begin{thm}\label{main08}
In terms of normal coordinates $(x,y)$, the induced metric (the first fundamental form) on a $p$-minimal surface of special type I degenerates on the singular set $\{(x,y)\ |\ x=-\zeta_{1}(y) \}$. Therefore if it is not symmetric, then it will never smoothly extend through the singular set.
\end{thm}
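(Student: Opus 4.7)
The first assertion is a direct inspection. From equation \eqref{indmet}, the first fundamental form for special type I in normal coordinates is
\[
I = dx \otimes dx + \bigl[(x+\zeta_1(y))^2 + (x+\zeta_1(y))^4\bigr]\, dy \otimes dy,
\]
and the $dy\otimes dy$-coefficient vanishes precisely on $\{x = -\zeta_1(y)\}$. On this set $I$ reduces to $dx \otimes dx$, whose kernel is spanned by $\partial/\partial y$, so $I$ is degenerate on the singular set.

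For the ``therefore'' part I would argue by contradiction. Suppose $\zeta_1$ is nonconstant and $\Sigma$ admits a smooth extension $\tilde{\Sigma}\subset H_1$ through (a portion of) its singular curve. Then the induced metric $\tilde{I}$ from $g_\Theta$ is a smooth Riemannian metric on $\tilde{\Sigma}$, and $\tilde{I}|_\Sigma = I$. I would first record the degeneration of the parametrization itself: using $\hat e_2 = b\,X_y$ with $b = \alpha^2/\sqrt{1+\alpha^2}\to\infty$ as $x\to -\zeta_1(y)$, one has $X_y\to 0$; meanwhile, by Theorem \ref{theo2}, the characteristic direction $X_x = e_1$ admits a continuous limit $e_1^*(y)$ along each characteristic line. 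The singular curve $\gamma(y) = \lim_{x\to -\zeta_1(y)} X(x,y)$ in $\tilde{\Sigma}$ therefore has tangent
\[
\gamma'(y) \;=\; -\zeta_1'(y)\, e_1^*(y),
\]
which is parallel to $e_1^*(y)$ wherever $\zeta_1'(y)\neq 0$. Geometrically, in the nonsymmetric case the characteristic rulings of $\Sigma$ reach $\gamma$ tangentially.

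Combining this with the straightness of the characteristic curves (forced by $H=0$) identifies $\Sigma$, on one side of $\gamma$, with a piece of the tangent developable of $\gamma$. A standard differential-geometric fact—which goes through verbatim with $g_\Theta$ in place of the Euclidean metric—is that the tangent developable of a regular $C^2$ curve has a cuspidal edge along its curve of tangency. This cuspidal edge obstructs smoothness of $\tilde{\Sigma}$ along $\gamma$, contradicting the assumed smooth extension. In the symmetric case the obstruction disappears because $\gamma$ collapses to a single point (as in the $u=0$ example parametrized in polar coordinates), turning the would-be cuspidal edge into a removable isolated singularity.

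The main obstacle is making rigorous the passage from ``rulings tangent to $\gamma$'' to the nonsmoothness of the developable in the contact-Riemannian setting. If needed, I would avoid invoking the classical tangent-developable theorem by working directly with the induced metric: pick smooth coordinates $(u,v)$ on $\tilde{\Sigma}$ around $p=\gamma(y_0)$ with $\{v=0\}=\gamma$, and let $J$ denote the Jacobian of the transition $(x,y)\mapsto(u,v)$ on the regular side. The pullback identity $I = J^T\tilde{I}\,J$ together with nondegeneracy of $\tilde{I}$ forces $u_y, v_y\to 0$ along the singular set; differentiating $v(-\zeta_1(y),y)\equiv 0$ then gives $\zeta_1'(y)\,v_x\to 0$, so $v_x\to 0$ wherever $\zeta_1'(y)\neq 0$. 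The resulting simultaneous vanishing of $v_x, v_y, u_y$ at a regular point of $\gamma$ contradicts $(u,v)$ being a smooth chart on $\tilde{\Sigma}$ there, yielding the desired conclusion.
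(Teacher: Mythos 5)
Your first paragraph is precisely the paper's own argument for the degeneration claim: the paper computes $I=dx\otimes dx+\frac{1+\alpha^{2}}{\alpha^{4}}\,dy\otimes dy$ from $\hat{e}_{2}=\frac{\alpha^{2}}{\sqrt{1+\alpha^{2}}}\frac{\partial}{\partial y}$ and notes that the $dy\otimes dy$ coefficient tends to $0$ where $\alpha$ blows up. That part is correct and identical in substance to \eqref{indmet}.

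For the ``therefore'' part the paper is content with a one-sentence assertion (a smooth extension would make the singular set a curve on which the restriction of $g_{\Theta}$ is non-degenerate), while you attempt a real argument, and this is where a genuine gap appears. Your primary route --- the rulings are straight lines tangent to $\gamma$, so the surface is the tangent developable of $\gamma$ and must carry a cuspidal edge --- is the correct geometric mechanism, but the classical cuspidal-edge statement requires $\gamma$ to be $C^{2}$ with nowhere-vanishing curvature, and neither is established here ($\gamma$ is only known to be $C^{1}$, with speed $|\zeta_{1}'(y)|$ that may vanish at points). More seriously, the fallback Jacobian argument draws a false inference at its last step: the simultaneous vanishing of $u_{y},v_{y},v_{x}$ along the singular set does \emph{not} contradict $(u,v)$ being a smooth chart. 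A transition from a degenerate parametrization to a perfectly smooth chart can have a rank-one Jacobian along a whole curve; for instance $\Phi(x,y)=\bigl(x,\ (x+\zeta_{1}(y))^{2}\bigr)$ maps $\{x>-\zeta_{1}(y)\}$ diffeomorphically onto $\{v>0\}$, extends smoothly across the singular set, and there satisfies $u_{y}=v_{x}=v_{y}=0$ while $u_{x}=1$. So the contradiction you announce does not follow from the rank drop alone. To close the argument one must use the precise asymptotics of the metric, not just its degeneration: e.g.\ for the $\Phi$ above, forcing $\Phi^{*}\tilde{I}=dx\otimes dx+\frac{1+\alpha^{2}}{\alpha^{4}}dy\otimes dy$ makes the cross term $\tilde{g}_{uv}$ behave like $\sqrt{v}$ near $v=0$, which is incompatible with $\tilde{I}$ being a smooth tensor --- that kind of quantitative incompatibility (or a rigorous version of the cuspidal-edge computation) is the missing ingredient in both of your routes.
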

\begin{proof}
In terms of normal coordinates, a $p$-minimal surface of special type I is given by a function $\zeta_{1}(y)$, in which we have 
\[\alpha=\frac{1}{x+\zeta_{1}(y)},\ a=0,\ b=\frac{\alpha^{2}}{\sqrt{1+\alpha^{2}}}.\]
Therefore, 
\[\hat{e}_{2}=a\frac{\partial}{\partial x}+b\frac{\partial}{\partial y}=\frac{\alpha^{2}}{\sqrt{1+\alpha^{2}}}\frac{\partial}{\partial y}.\]
This is equivalent to that the induced metric $I$ is degenerate on the singular set on which $\alpha$ blows up. In fact, we have 
\begin{equation*}
I=\hat{\omega}^{1}\otimes\hat{\omega}^{1}+\hat{\omega}^{2}\otimes\hat{\omega}^{2}=dx\otimes dx+\left(\frac{1+\alpha^{2}}{\alpha^{4}}\right)dy\otimes dy,
\end{equation*}
where $(\hat{\omega}^{1},\hat{\omega}^{2})$ is the dual co-frame of $(\hat{e}_{1},\hat{e}_{2})$. If it is not symmetric, and suppose that it can be smoothly extended beyond the singular set, then the singular set is actually a singular curve  and the induced metric must be non-degenerate. This completes the proof.
\end{proof}

\subsection{Examples of special type II}\label{model2}
The other family of $p$-minimal surfaces is defined by the graph of 
\begin{equation}\label{pmin2}
u=-ABx^{2}+(A^{2}-B^{2})xy+ABy^{2}+g(-Bx+Ay),
\end{equation}
for some real constants $A$ and $B$ such that $A^{2}+B^{2}=1$ and $g\in C^{\infty}(\R)$.

\begin{lem}\label{le02}
The graph defined by (\ref{pmin2}) is congruent to the graph of $u=xy+g(y)$.  
\end{lem}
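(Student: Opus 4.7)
The plan is to exhibit an explicit Heisenberg rigid motion relating the two graphs, exactly as in the proof of Lemma \ref{le01}, but using a rotation instead of a left translation. The tell-tale sign is the relation $A^{2}+B^{2}=1$, which invites writing $A=\cos\theta$, $B=\sin\theta$ and employing the rotation about the $z$-axis
\[
R_{\theta}\colon (x,y,z)\longmapsto (Ax+By,\,-Bx+Ay,\,z).
\]
It is standard that $R_{\theta}$ is a pseudohermitian transformation (a direct computation confirms $R_{\theta}^{\ast}\Theta=\Theta$ since the mixed terms in $x'\,dy'-y'\,dx'$ cancel), hence a Heisenberg rigid motion.

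The main step is then a one-line algebraic identity. Given $(x,y,u)$ on the graph $\Sigma_{2}$ defined by \eqref{pmin2}, set
\[
(x',y',u')=R_{\theta}(x,y,u)=(Ax+By,\,-Bx+Ay,\,u).
\]
I would compute
\[
x'y'=(Ax+By)(-Bx+Ay)=-ABx^{2}+(A^{2}-B^{2})xy+ABy^{2},
\]
which reproduces exactly the quadratic part of \eqref{pmin2}, while the argument $-Bx+Ay$ of $g$ is precisely $y'$. Subtracting, one finds $u'-x'y'=g(y')$, i.e.\ $(x',y',u')$ lies on the graph $u=xy+g(y)$. Hence $R_{\theta}$ sends $\Sigma_{2}$ bijectively onto the graph of $u=xy+g(y)$, which establishes congruence.

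Since $R_{\theta}$ is a global diffeomorphism of $H_{1}$, no further argument is needed; the correspondence is global, not only local. There is no real obstacle here: the only thing one has to notice is that the coefficients in \eqref{pmin2} are precisely those that arise by rotating the variables $(x,y)$ by the orthogonal matrix with columns $(A,-B)$ and $(B,A)$, so the choice of rotation angle is forced by $A^{2}+B^{2}=1$. All remaining work is bookkeeping of the $xy$-cross-term and verification that the $z$-coordinate is unchanged, which is automatic because the graphs are defined without a shift in $z$.
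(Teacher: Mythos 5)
Your proposal is correct and follows essentially the same route as the paper: both exploit $A^{2}+B^{2}=1$ to apply the rotation $(x,y)\mapsto(Ax+By,-Bx+Ay)$ about the $z$-axis, observe that the quadratic part of \eqref{pmin2} is exactly the product $(Ax+By)(-Bx+Ay)$, and conclude $u=XY+g(Y)$ in the rotated coordinates. The only (harmless) addition on your part is the explicit check that the rotation preserves $\Theta$, which the paper leaves implicit.
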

\begin{proof}
Since $A^{2}+B^{2}=1$, the matrix 
\begin{equation*}
\left(\begin{array}{rl}
A&B\\-B&A
\end{array}\right)
\end{equation*}
defines a rotation on $\mathbb R^{2}$. Let
\begin{equation*}
\left(\begin{array}{c}
X\\Y
\end{array}\right)=\left(\begin{array}{rl}
A&B\\-B&A
\end{array}\right)\left(\begin{array}{c}
x\\y
\end{array}\right),
\end{equation*}
we have
\begin{equation*}
\begin{split}
XY&=(Ax+By)(-Bx+Ay)\\
&=-ABx^{2}+(A^{2}-B^{2})xy+ABy^{2},
\end{split}
\end{equation*}
which implies
\begin{equation*}
u=XY+g(Y).
\end{equation*}
This completes the proof.
\end{proof}

\begin{exa}\label{exa76}
We now study the example of the graph of $u=xy+g(y)$. We consider a parametrization of the graph defined by 
\[
X: (x,y) \rightarrow (x,y,xy+g(y)),
\]
then we have 
\begin{equation}\label{comspeii1}
X_{x}=(1,0,y)=\mathring{e}_{1},\ \ 
X_{y}=(0,1,x+g'(y)).
\end{equation}
The horizontal normal $e_{2}$ is taken to be	
\begin{equation*}
e_2=\left\{\begin{array}{l}\frac{(u_x-y)}{D}\mathring{e}_{1}+\frac{(u_y+x)}{D}\mathring{e}_{2}\\
\\
-\frac{(u_x-y)}{D}\mathring{e}_{1}-\frac{(u_y+x)}{D}\mathring{e}_{2}\end{array}\right.\\
=\left\{\begin{array}{ll}\frac{2x+g'(y)}{D}\mathring{e}_{2}=\mathring{e}_{2}&,\ \textrm{if}\ 2x+g'(y)>0\\
\\
-\frac{2x+g'(y)}{D}\mathring{e}_{2}=\mathring{e}_{2}&,\ \textrm{if}\ 2x+g'(y)<0\end{array},\right.
\end{equation*}
where $D=|2x+g'(y)|$. Combining with \eqref{comspeii1}, one sees
\begin{equation}\label{comspeii2}
e_1=-Je_2=\mathring{e}_{1}=X_{x}=\frac{\partial}{\partial x}.
\end{equation}
 We proceed to compute the $\alpha$-function, $a$ and $b$ in terms of $(x,y)$, which is a compatible coordinate system.
These are obtained from $\frac{\alpha e_2+T}{\sqrt{1+\alpha^{2}}} =aX_{x}+bX_{y}$ immediately as follows. 
\begin{equation}\label{comspeii4}
a=0,\ b=\frac{\alpha}{\sqrt{1+\alpha^{2}}},\ \textrm{and}\ \ \alpha=\frac{1}{2x+g'(y)}.
\end{equation}

(i) Thus, from the formula of $b$, the coordinates $(x,y)$ is a {\bf normal coordinate system} on the part where $2x+g'(y)>0$, and we have $\zeta_{1}(y)=g'(y)$. It is easy to see that the graph of $u=xy+g(y)$, for $g\in C^{\infty}(\R)$, is just the maximal surface $S_{II}^{+}(g'(y))$ when we restrict to the domain $\{(x,y)\ |\ y\in\R,\ 2x+g'(y)>0\}$.

(ii) For the other part with $2x+g'(y)<0$, we have $b<0$. Therefore, instead of $(x,y)$, the new coordinate system $(\tilde{x},\tilde{y})=(x,-y)$ is a normal coordinate system (notice that the compatible coordinates are chosen so that $b>0$). The invariants $\alpha, a$ and $b$ read
\begin{equation}\label{comspeii5}
a=0,\ b=-\frac{\alpha}{\sqrt{1+\alpha^{2}}}>0,\ \textrm{and}\ \ \alpha=\frac{1}{2x+g'(-\tilde{y})}<0,
\end{equation}
and hence $\zeta_{1}(\tilde{y})=g'(-\tilde{y})$. Here $'$ means the derivative with respect to $y$.

(iii) For the other part with $2x+g'(y)<0$, we can say something more. If, instead of $\frac{\partial}{\partial x}$, we choose $-\frac{\partial}{\partial x}$ as the characteristic direction, that is, $e_{1}=-\frac{\partial}{\partial x}$, then the coordinates $(\tilde{x},\tilde{y})=(-x,-y)$ lead to the normal coordinate system for the part with $2x+g'(y)<0$. As a result, we consider the re-parametrization of the surface
\[
X: (\tilde{x},\tilde{y}) \rightarrow (-\tilde{x},-\tilde{y},\tilde{x}\tilde{y}+g(-\tilde{y})) 
\]
such that  $e_{1}=\frac{\partial}{\partial \tilde{x}}=X_{\tilde{x}}$. Similarly, from $\frac{\alpha e_2+T}{\sqrt{1+\alpha^{2}}} =aX_{\tilde{x}}+bX_{\tilde{y}}$, we have
\begin{equation}\label{comspeii6}
a=0,\ b=\frac{\alpha}{\sqrt{1+\alpha^{2}}}>0,\ \textrm{and}\ \ \alpha=\frac{1}{2\tilde{x}+\frac{\partial\tilde{g}}{\partial\tilde{y}}(\tilde{y})}>0, 
\end{equation}
where $\tilde{g}(\tilde{y})$ is defined by $\tilde{g}(\tilde{y})=g(-\tilde{y})$. Thus $(\tilde{x},\tilde{y})$ is the {\bf normal coordinate system} and we have $\zeta_{1}(\tilde{y})=\frac{\partial\tilde{g}}{\partial\tilde{y}}(\tilde{y})$.
\end{exa}

Let $R(g(y))$ and $L(g(y))$ be the part of the surface with $2x+g'(y)>0$ and $2x+g'(y)<0$, respectively. In terms of the notations defined in Subsection \ref{mami}, we see that $R(g(y))=S_{II}^{+}(g'(y))$ and $L(g(y))=S_{II}^{-}(g'(y))$. Then, comparing with 
\eqref{comspeii4} and \eqref{comspeii6}, we immediately have the following proposition, due to Theorem \ref{main051}.

\begin{prop}
The surface $L(g(-y))\left(\textrm{or}\ S_{II}^{-}(-g'(y))\right)$ and $R(g(y))\left(\textrm{or}\ S_{II}^{+}(g'(y))\right)$ are congruent to each other. They in fact differ by an action of the Heisenberg rigid motion
$(x,y,t)\rightarrow (-x,-y,t)$.
\end{prop}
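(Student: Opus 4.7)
The plan is to exhibit the Heisenberg rigid motion $\phi:H_{1}\rightarrow H_{1}$, $\phi(x,y,z)=(-x,-y,z)$, and to check that $\phi$ carries the explicit parametrization of $R(g(y))$ onto that of $L(g(-y))$. Since a Heisenberg rigid motion is, by definition, a diffeomorphism preserving the pseudohermitian structure $(J,\Theta)$, the argument splits into two independent checks: the rigid-motion property of $\phi$, and the identification of images.

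First I would verify $\phi\in PSH(1)$. For the contact form, a direct substitution gives $\phi^{*}\Theta=dz+(-x)d(-y)-(-y)d(-x)=dz+xdy-ydx=\Theta$. For the CR structure, the push-forwards of coordinate fields are $\phi_{*}\partial_{x}=-\partial_{x}$, $\phi_{*}\partial_{y}=-\partial_{y}$, $\phi_{*}\partial_{z}=\partial_{z}$; combining this with $y\circ\phi^{-1}=-y$ and $x\circ\phi^{-1}=-x$, a short computation gives $\phi_{*}\mathring{e}_{1}=-\mathring{e}_{1}$ and $\phi_{*}\mathring{e}_{2}=-\mathring{e}_{2}$ as vector fields on $H_{1}$. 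Using $\R$-linearity of $J$ together with $J\mathring{e}_{1}=\mathring{e}_{2}$, we obtain $J(\phi_{*}\mathring{e}_{1})=-\mathring{e}_{2}=\phi_{*}(J\mathring{e}_{1})$, so $J$ is preserved. Hence $\phi$ is a Heisenberg rigid motion.

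Next, I would apply $\phi$ to the parametrization $P(x,y)=(x,y,xy+g(y))$ of $R(g(y))$, defined on the open set $\{(x,y):2x+g'(y)>0\}$. This yields $\phi\circ P(x,y)=(-x,-y,xy+g(y))$. Writing $\tilde{x}=-x$ and $\tilde{y}=-y$, the third coordinate rewrites as $\tilde{x}\tilde{y}+g(-\tilde{y})$, so the image lies on the graph $u=\tilde{x}\tilde{y}+g(-\tilde{y})$. The domain inequality $2x+g'(y)>0$ transforms into $-2\tilde{x}+g'(-\tilde{y})>0$, i.e. $2\tilde{x}+\tfrac{d}{d\tilde{y}}\bigl[g(-\tilde{y})\bigr]<0$ (since $\tfrac{d}{d\tilde{y}}g(-\tilde{y})=-g'(-\tilde{y})$). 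This is precisely the condition cutting out the $L$-part of the graph of $u=xy+g(-y)$, so $\phi(R(g(y)))=L(g(-y))$, and the proposition follows.

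The only real difficulty I anticipate is the sign bookkeeping in the second step: one must carefully distinguish $\tfrac{d}{d\tilde{y}}g(-\tilde{y})=-g'(-\tilde{y})$ from $g'(\tilde{y})$ and confirm that the transformed inequality selects the $L$-side rather than the $R$-side. Everything else is a routine unpacking of the definitions of $L$, $R$, $\Theta$, $J$, and the left-invariant frame $\mathring{e}_{1},\mathring{e}_{2}$.
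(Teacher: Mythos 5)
Your proof is correct, but it takes a different route from the paper's. The paper establishes the congruence indirectly: in items (i)--(iii) of the special type II example it computes the normal-form invariants of the two pieces, obtaining $\zeta_{1}(y)=g'(y)$ for $R(g(y))$ in \eqref{comspeii4} and $\zeta_{1}(\tilde{y})=\frac{\partial \tilde{g}}{\partial\tilde{y}}(\tilde{y})$ with $\tilde{g}(\tilde{y})=g(-\tilde{y})$ for the left piece in \eqref{comspeii6}, and then invokes the uniqueness clause of Theorem \ref{main051} (equal $\zeta_{1}$-invariants imply congruence); the specific motion $(x,y,t)\mapsto(-x,-y,t)$ is then read off from the reparametrization $(\tilde{x},\tilde{y})=(-x,-y)$ used in (iii). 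You instead verify everything directly: you check that $\phi(x,y,z)=(-x,-y,z)$ preserves $\Theta$ and $J$ (your computations $\phi^{*}\Theta=\Theta$ and $\phi_{*}\mathring{e}_{i}=-\mathring{e}_{i}$ are correct --- this is just the rotation by $\pi$ about the $z$-axis, an element of the $U(1)$-part of $PSH(1)$), and then check that $\phi$ carries the graph piece $\{2x+g'(y)>0\}$ of $u=xy+g(y)$ onto the piece $\{2\tilde{x}+\frac{d}{d\tilde{y}}g(-\tilde{y})<0\}$ of $u=\tilde{x}\tilde{y}+g(-\tilde{y})$; the sign bookkeeping $\frac{d}{d\tilde{y}}g(-\tilde{y})=-g'(-\tilde{y})$ that you flag is handled correctly. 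Your approach is more elementary and self-contained (it does not rely on the fundamental theorem or on the invariant-theoretic machinery), and it has the advantage of proving the second sentence of the proposition --- that the congruence is realized by this particular rigid motion --- directly rather than as an afterthought; what it does not give, and what the paper's route does, is the identification of both pieces with the maximal surfaces $S_{II}^{\pm}$ through their $\zeta_{1}$-invariants, but that identification is supplied by the sentence preceding the proposition in any case.
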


\begin{thm}\label{main09}
Any $p$-minimal surface of special type II is locally a part of the surface defined by $u=xy+g(y)$ for some $g\in C^{\infty}(\mathbb R)$, up to a Heisenberg rigid motion. In addition, it is symmetric if and only if $g(y)$ is linear in the variable $y$.
Therefore, any {\bf symmetric} $p$-minimal surface of special type II is locally a part of the surface defined by the graph of $u=xy$, up to a Heisenberg rigid motion.
\end{thm}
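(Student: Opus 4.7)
The plan is to exploit the classification furnished by Theorem \ref{main051} (and its maximal refinement, Theorem \ref{main061}), which tells us that a $p$-minimal surface of special type II is completely determined, up to a Heisenberg rigid motion, by its $\zeta_{1}$-invariant. Thus producing a concrete model surface with any prescribed $\zeta_{1}$-invariant is enough.

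First I would take an arbitrary $p$-minimal surface $\Sigma$ of special type II and read off its $\zeta_{1}$-invariant $\zeta_{1}(y)$ in a normal coordinate system, as guaranteed by Theorem \ref{main041}. Since $\zeta_{1}$ is smooth, I may choose an antiderivative $g \in C^{\infty}(\R)$ with $g'(y) = \zeta_{1}(y)$. Next I would invoke the example computation preceding the theorem (see \eqref{comspeii4}): for the graph $u = xy + g(y)$, on the connected component where $2x + g'(y) > 0$, the coordinates $(x,y)$ are already normal and the $\alpha$-function is $\alpha = \tfrac{1}{2x + g'(y)}$, so its $\zeta_{1}$-invariant equals $g'(y) = \zeta_{1}(y)$. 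Applying the uniqueness clause of Theorem \ref{main051}(1) (or equivalently Theorem \ref{main061}), the surface $\Sigma$ must be locally congruent to a piece of the graph of $u = xy + g(y)$, which is the first assertion.

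For the symmetric statement, I would observe that $\Sigma$ being symmetric of special type II means $\zeta_{1}(y) \equiv A$ is constant, so the antiderivative has the form $g(y) = Ay + B$ is linear; conversely, any linear $g$ yields a constant $\zeta_{1}$. Hence the symmetric surfaces are exactly the graphs $u = xy + Ay + B$. To reduce each such graph to $u = xy$, I would apply the Heisenberg rigid motion given by left translation by $(A/2,\, 0,\, -B)$; a direct computation using the group law
\[
(a,b,c)\circ(x,y,u) = (a+x,\; b+y,\; c+u+bx - ay)
\]
shows that, after the change of coordinates $X = x + A/2$, $Y = y$, the defining equation becomes $u = XY$. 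This yields the final claim.

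The argument is almost entirely a bookkeeping application of the classification theorems; the only point requiring care is step two, namely verifying that the graph $u = xy + g(y)$ actually realizes the prescribed $\zeta_{1}$-invariant on a normal coordinate chart. This has been carried out in detail in the example above, so I would simply cite formula \eqref{comspeii4}. No genuine obstacle remains; the potentially subtle issue is only keeping track of which connected component (the one with $2x + g'(y) > 0$) carries the chosen orientation of the characteristic direction, but the analogous treatment for the opposite sign component was already handled via the reparametrization leading to \eqref{comspeii6}, so both halves are covered.
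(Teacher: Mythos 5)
Your proposal is correct and follows essentially the same route as the paper: read off the normal representation $\alpha=\frac{1}{2x+\zeta_{1}(y)}$, $a=0$, $b=\frac{|\alpha|}{\sqrt{1+\alpha^{2}}}$, match it against the invariants computed for the graph $u=xy+g(y)$ in \eqref{comspeii4} by choosing $g'=\zeta_{1}$, and invoke the uniqueness clause of the fundamental theorem. The only difference is that you make explicit the left translation by $(A/2,0,-B)$ reducing $u=xy+Ay+B$ to $u=xy$, a step the paper leaves implicit; your computation of that translation is correct.
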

\begin{proof}
Any $p$-minimal surface of special type II locally has the following normal representation
\begin{equation}
a=0,\ b=\frac{|\alpha|}{\sqrt{1+\alpha^{2}}},\ \ \alpha=\frac{1}{2x+\zeta_{1}(y)},
\end{equation}
in terms of normal coordinates $(x,y)$. Therefore, comparing with \eqref{comspeii4}, the proof is finished if we choose $g$ such that $g'(y)=\zeta_{1}(y)$. Moreover, it is symmetric if and only if $\zeta_{1}(y)=g'(y)=$ constant, i.e., $g$ is linear in $y$.
\end{proof}

\subsection{Examples of types I, II and III}\label{exagt}
We consider the surface $\Sigma\in H_{1}$ defined on $\R^2$ by 
\begin{equation}\label{gs}
X:(s,t) \rightarrow (x,y,z)=(s\cos \theta(t),s \sin \theta(t),t).	
 \end{equation} 
Then it can be calculated that $$X_s=(\cos\theta(t),\sin\theta(t),0),\quad X_t=(-s\theta'\sin\theta(t),s\theta'\cos\theta(t),1).$$
Notice that $\mathring{e}_{1}|_{(0,0,z)}=\frac{\partial}{\partial x}$, $\mathring{e}_{2}|_{(0,0,z)}=\frac{\partial}{\partial y}$ and $\theta(t)=\theta(z)$, i.e., $\theta$ is independent of $x$ and $y$. We rewrite $X_s$ as
\begin{equation}
	X_s=\cos\theta(t)\frac{\partial}{\partial x}+\sin\theta(t)\frac{\partial}{\partial y}
	=\cos\theta(t)\mathring{e}_{1}(X)+\sin\theta(t) \mathring{e}_{2}(X)\in\xi,
\end{equation}
which is a vector tangent to the contact plane. Then we choose $e_{1}=X_s$, and hence $e_2=Je_{1}=-\sin\theta(t)\mathring{e}_{1}(X)+\cos\theta(t) \mathring{e}_{2}(X)$, which yields
\begin{equation}
\begin{split}
	\bigtriangledown_{e_1}e_2&=-(e_1\theta(t))(\cos\theta(t)\mathring{e}_{1}(X)-\sin\theta(t)\mathring{e}_{2}(X))\\
	&=0,\ \left(\because e_{1}\theta(t)=\frac{d\theta(t)}{ds}=0\right).
	\end{split}
\end{equation}
This implies that such surface defined by \eqref{gs} has $p$-mean curvature $H=0$. We proceed to work out the $\alpha$-function $\alpha$, $a$ and $b$. By definition, it is a function satisfying $\alpha e_2+T \in T\Sigma$, that is,
\begin{equation}\label{comalp1}
\alpha(-\sin\theta\mathring{e}_{1}+\cos\theta\mathring{e}_{2})+T=EX_s+FX_t,
\end{equation}
 for some functions $E,F$. Similarly, we rewrite $X_s$ as a linear combination of $\mathring{e}_{1},\mathring{e}_{2}$ and $\frac{\partial}{\partial z}$, and we can express $X_t$ as
 \begin{equation}\label{comalp2}
 X_t=(-s\theta'\sin{\theta})\mathring{e}_{1}+(s\theta'\cos{\theta})\mathring{e}_{2}+(s^{2}\theta'+1)\frac{\partial}{\partial z}.
 \end{equation}
Combining  \eqref{comalp1} and \eqref{comalp2} and notice that $X_s=e_{1}$, we obtain that $E=0,\ F=\frac{1}{s^2\theta'(t)+1}$ and hence 
\begin{equation*}
\alpha=\frac{s\theta'(t)}{s^2\theta'(t)+1},\ a=0,\ b=\frac{F}{\sqrt{1+\alpha^2}}.
\end{equation*}
If $\theta'(t)=0$, then we have $\alpha=0$. However, if $\theta'(t)\neq 0$, then $\alpha$, $a$ and $b$ read
\begin{equation}\label{typfor}
\alpha=\frac{s}{s^2+\frac{1}{\theta'(t)}},\ a=0,\ b=\frac{\alpha}{s\sqrt{1+\alpha^{2}}}\frac{1}{\theta'(t)},
\end{equation}
which means that $(s,t)$ is an orthogonal coordinate system, but not normal. In particular, if $\theta'(t)>0$ for all $t$, then one sees that the $p$-minimal surface has no singularities. From (\ref{typfor}), we conclude that
this surface is of {\bf type I} if $\theta^{'}(t)>0$ for all $t$. On the other hand, if $\theta^{'}(t)<0$ for all $t$, then it is either of 
{\bf type II}  on which $s>\sqrt{-\frac{1}{\theta'(t)}(t)}$ or $s<-\sqrt{-\frac{1}{\theta'(t)}(t)}$; or {\bf type III} on which $-\sqrt{-\frac{1}{\theta'(t)}(t)}<s<\sqrt{-\frac{1}{\theta'(t)}(t)}$.\\
Finally, we can further take the coordinates $(\tilde{s},\tilde{t})=(s-C, \theta(t))$ to normalize $b$ such that it only depends on $s$ and $\alpha$, then we have 
\begin{equation}\label{typfor1}
\alpha=\frac{\tilde{s}+C}{(\tilde{s}+C)^2+\frac{1}{\theta'(\theta^{-1}(\tilde{t}))}},\ a=0,\ b=\frac{\alpha}{(\tilde{s}+C)\sqrt{1+\alpha^{2}}},
\end{equation}
for some constant $C$, and hence
\begin{equation}
\zeta_{1}(\tilde{t})=C,\ \ \zeta_{2}(\tilde{t})=\frac{1}{\theta'(\theta^{-1}(\tilde{t}))}.
\end{equation}

\begin{figure}[ht]
 \begin{minipage}{0.45\textwidth}
   \centering
   \includegraphics[scale=0.45]{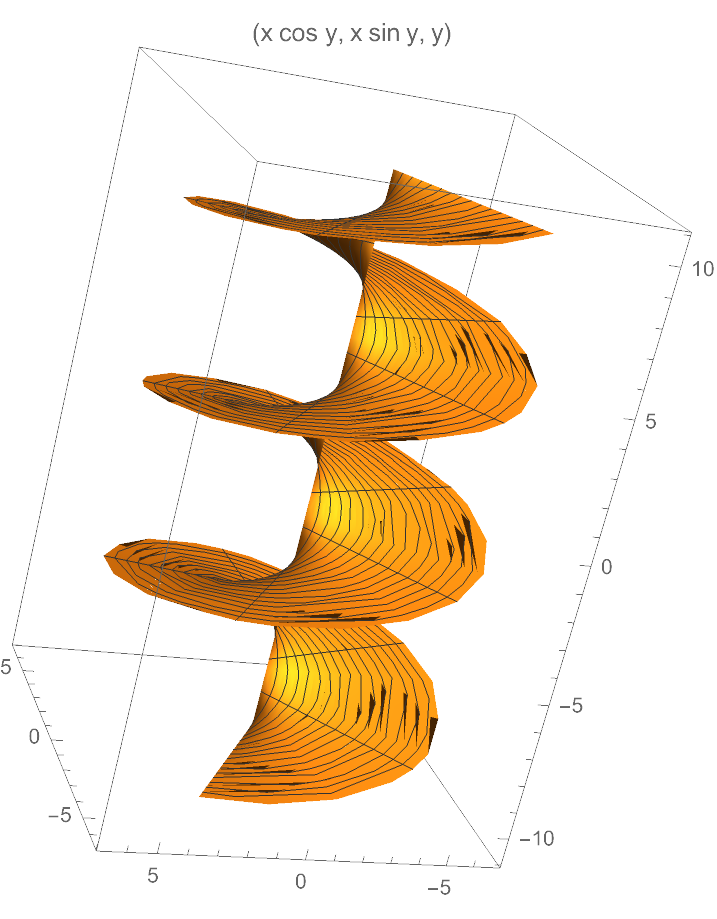}
   \caption{Helicoid}
   \label{fig5}
    \end{minipage}
 \begin{minipage}{0.45\textwidth}
   \centering
   \includegraphics[scale=0.5]{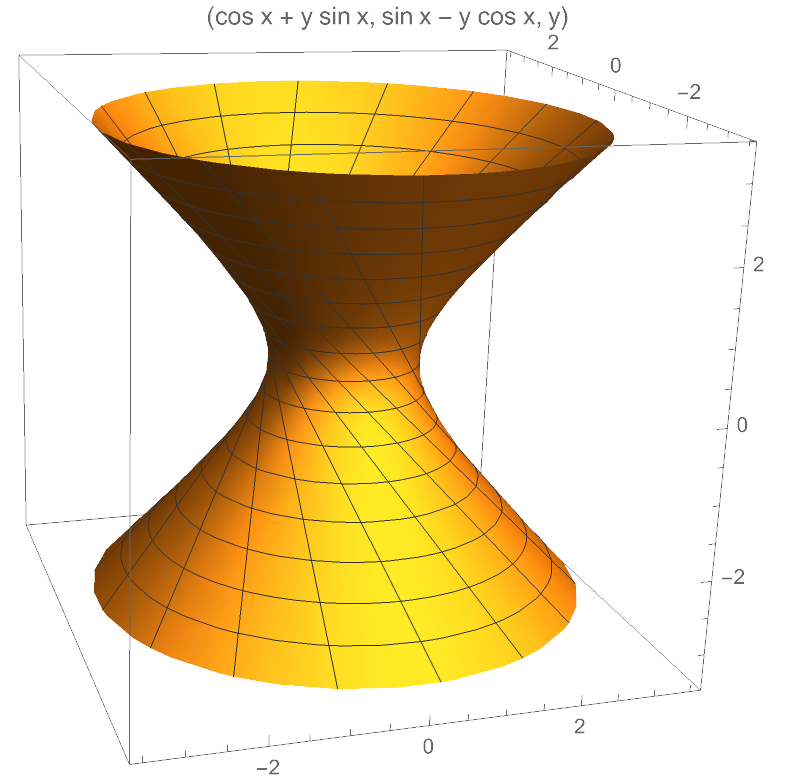}
   \caption{Conicoid}
   \label{fig6}
   \end{minipage}
\end{figure} 

\section{Structures of singular sets of $p$-minimal surfaces}\label{strofsing}

In this section, we assume that $\Sigma\subset H_{1}$ is a $p$-minimal surface.
\begin{prop}\label{typro}
Let $p$ be a singular point of a $p$-minimal surface $\Sigma$. Then there must be a characteristic line approaching this point $p$.
\end{prop}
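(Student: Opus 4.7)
My approach is a compactness-and-limit argument carried out in the ambient $\R^{3}\cong H_{1}$. First I would verify that regular points of $\Sigma$ accumulate at $p$. If, on the contrary, an open neighborhood of $p$ in $\Sigma$ were entirely singular, then $T\Sigma=\xi|_{\Sigma}$ on that open piece, so $\Theta$ pulled back to $\Sigma$ would vanish there, forcing $d\Theta|_{T\Sigma}=0$; but $d\Theta$ is nondegenerate on $\xi$, and a $2$-dimensional symplectic vector space admits no $2$-dimensional Lagrangian subspace, contradiction. Hence I may select regular points $p_{n}\in\Sigma$ with $p_{n}\to p$.

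Next I extract a limit characteristic direction and a limit line. At each $p_{n}$ choose a unit characteristic direction $e_{1}(p_{n})\in\xi_{p_{n}}\cap T_{p_{n}}\Sigma$. Since $\xi_{p_{n}}\to\xi_{p}$ continuously, after passing to a subsequence $e_{1}(p_{n})\to v$ for some unit $v\in\xi_{p}=T_{p}\Sigma$. The paper recalls that on a $p$-minimal surface every characteristic curve is a Euclidean straight line in $\R^{3}$; let $\gamma_{n}$ be the straight line through $p_{n}$ in direction $e_{1}(p_{n})$. As lines in $\R^{3}$, the $\gamma_{n}$ converge to the line $L$ through $p$ in direction $v$. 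I would fix a small Euclidean ball $U$ about $p$ so that $\Sigma\cap U$ is a closed embedded $C^{\infty}$ piece of surface; for $n$ large, the segment $\gamma_{n}\cap U$ has Euclidean length bounded below by some $\delta>0$. If I can show $\gamma_{n}\cap U\subset\Sigma\cap U$, then Hausdorff-closedness of $\Sigma\cap U$ gives $L\cap U\subset\Sigma\cap U$, so $L\cap U$ is a straight-line segment of $\Sigma$ through $p$ tangent to $\xi_{p}$; its regular-part interior is then a characteristic curve of $\Sigma$ having $p$ in its closure, which is the desired characteristic line approaching $p$.

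The main obstacle is the step asserting that the full segment $\gamma_{n}\cap U$ lies in $\Sigma$: a priori the characteristic curve through $p_{n}$ is only guaranteed to extend along $\gamma_{n}$ as long as it remains in the regular part of $\Sigma$, and conceivably it could be truncated inside $U$ by other singular points at vanishing distance from $p_{n}$. I would handle this by exploiting the fact, already established above, that the singular set is nowhere dense in $\Sigma$, and passing to a further subsequence for which, on at least one side of each $p_{n}$, the straight line through $p_{n}$ in direction $e_{1}(p_{n})$ meets no singular point of $\Sigma$ within Euclidean distance $\delta$. Making this precise---for example via a transversality argument showing that the characteristic straight segments through points $p_{n}$ with a fixed limiting direction cannot all be simultaneously cut short by singularities accumulating at $p$---is the technical core of the argument and is the place in the proof requiring most care.
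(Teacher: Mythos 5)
Your preliminary step (the singular set has empty interior, by the non-integrability of $\xi$: a two-dimensional symplectic space has no two-dimensional isotropic subspace) and the extraction of a limit Legendrian line $L$ through $p$ are both correct, and your route --- an ambient compactness argument on the straight characteristic lines --- is genuinely different from the paper's, which works on the phase plane of the Li\'enard equation. However, the step you flag yourself is a real gap, not a technicality, and neither device you propose closes it. Let $I_n\subset\gamma_n$ be the maximal characteristic segment through $p_n$; its endpoints inside $U$ are singular points of $\Sigma$. Nothing you have established excludes the scenario in which $\mathrm{length}(I_n)\to 0$ along every subsequence: nowhere-density of the singular set is a statement about the two-dimensional topology of $\Sigma$ and does not control how singular points are distributed along the one-parameter family of lines $\gamma_n$; singular points can, and generically do, accumulate at $p$, since they form a curve through $p$. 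In that scenario the inclusion $\gamma_n\cap U\subset\Sigma$ fails for every $n$, the limit line $L$ need not meet $\Sigma$ outside $p$, and no characteristic curve with $p$ in its closure is produced --- the segments $I_n$ simply shrink onto $p$. There is also a secondary issue: even granting $L\cap U\subset\Sigma$, a Legendrian straight line inside $\Sigma$ can consist entirely of singular points (the singular locus of the graph $u=xy$ is the Legendrian line $(0,y,0)$), so you would additionally need to argue that $L$ is not the singular curve itself.

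To close the gap you need strictly more than nowhere-density: either the structure theorem of \cite{CHMY} that the singular set near $p$ is contained in a single $C^1$ curve $\Gamma_p$, so that on one side of $\Gamma_p$ the surface is entirely regular and a definite sub-segment of $\gamma_n$ survives into the limit, or the explicit blow-up behaviour of $\alpha$. The paper takes the second path and argues by contradiction: assuming no leaf of the characteristic foliation ends at $p$, it chooses one-sided compatible coordinates near $p$ (using the $C^1$-curve structure of the singular set) and observes that the point $(\alpha,\alpha_x)$ then stays in a bounded region of the phase plane of $\alpha_{xx}+6\alpha\alpha_x+4\alpha^3=0$, because each solution blows up only at the finite ends of its maximal interval of definition; hence $\alpha$ is bounded near $p$, contradicting $\alpha\to\infty$ at a singular point. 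That boundedness argument is precisely the substitute for your missing transversality step; I would recommend either importing it or invoking Theorem 3.3 of \cite{CHMY} explicitly rather than leaving the truncation issue open.
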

\begin{proof}
Suppose no characteristic line approaches this point $p$. We would like to find a contradiction. Firstly, by \cite{CHMY}, there exists a small neighborhood of $p$ whose intersection with the singular set is contained in a smooth curve $\Gamma_{p}$. If the neighborhood is small enough, then, on one side of the curve $\Gamma_{p}$, we can find a compatible coordinate system $(U;x,y)$ such that  $p$ is contained on the boundary of $U$. Notice that, by our assumption, $p$ does not lie at the end of any leaf of the foliation defined by $e_{1}=\frac{\partial}{\partial x}$. Thus the image of the map defined on $U$ by $(x,y)\rightarrow(\alpha,\alpha_{x})$ is bounded on the {\bf phase plane} (see Figure \ref{dfield}). Therefore, we have that
$\lim_{(x,y)\rightarrow p}\alpha(x,y)$
is finite, which is a contradiction since $p$ is a singular point.
\end{proof}

\subsection{The proof of Theorem \ref{strofsiset}}
Due to Proposition \ref{typro}, it suffices to show this theorem for a $p$-minimal surface of some type. For general type (notice that there are no singular points for {\bf type I}), 
we choose a normal coordinate system $(x,y)$ such that $\alpha$, and $a,b$ read
\begin{equation}\label{equ51}
\alpha(x,y)=\frac{x+\zeta_{1}(y)}{(x+\zeta_{1}(y))^{2}-c^{2}(y)},\ \textrm{and}\ a=0,\ b=\frac{|\alpha|}{|x+\zeta_{1}(y)|\sqrt{1+\alpha^{2}}},
\end{equation}
where $c(y)$ is a {\bf positive} function of the variable $y$ such that $\zeta_{2}(y)=-c^{2}(y)$. Then the singular set is the graphs of the functions
\begin{equation}\label{equ52}
x=-\zeta_{1}(y)\pm c(y).
\end{equation}
By \eqref{indmet}, the induced metric $I$ (or the first fundamental form) on the regular part reads
\begin{equation}\label{equ53}
I=dx\otimes dx+\big[(x+\zeta_{1}(y))^{2}+[(x+\zeta_{1}(y))^{2}+\zeta_{2}(y)]^{2}\big]dy\otimes dy.
\end{equation}
Now we use the metric to compute the length of the singular set $\{(-\zeta_{1}(y)\pm c(y),y)\}$, where $y$ belongs to some open interval.
Let $\gamma_{\pm}(y)=(-\zeta_{1}(y)\pm c(y),y)$, which is a parametrization of the singular set. Then the square of the velocity at $y$ is
\begin{equation}\label{equ57}
|\gamma'_{\pm}(y)|^{2}=|-\zeta_{1}'(y)\pm c'(y)|^{2}+c^{2}(y)>0\ \textrm{for all}\ y,
\end{equation}
where we have used the fact that $(x+\zeta_{1}(y))^{2}=c^{2}(y)$ on the singular set. Formula (\ref{equ57}) shows that the parametrized curve $\gamma_{\pm}(y)$ of the singular set has a positive length. We omit similar proof for {\bf special type II}.

Finally, for {\bf special type I}, in terms of a normal coordinate system $(x,y)$, we have
\begin{equation}\label{equ58}
\alpha=\frac{1}{x+\zeta_{1}(y)},\ \textrm{and}\ a=0,\ b=\frac{\alpha^2}{\sqrt{1+\alpha^{2}}}.
\end{equation}
If $\gamma(y)=(-\zeta_{1}(y),y)$ is a parametrization of the singular set $\{(-\zeta_{1}(y),y)\}$ for $y$ inside some open interval, then \eqref{indmet} indicates that  the induced metric $I$ on the regular part reads
\begin{equation}\label{equ53}
dx\otimes dx+\big[(x+\zeta_{1}(y))^{2}+(x+\zeta_{1}(y))^{4}\big]dy\otimes dy
\end{equation}
and the square of the velocity at $y$ is
\begin{equation}\label{equ59}
|\gamma'(y)|^{2}=(\zeta_{1}'(y))^2+\big[(x+\zeta_{1}(y))^{2}+(x+\zeta_{1}(y))^{4}\big]=(\zeta_{1}'(y))^2,
\end{equation}
where we have used the fact that $x=-\zeta_{1}(y)$ on the singular set. From formula (\ref{equ59}), we see that the length of $\gamma(y)$ depends on whether the value $\zeta_{1}'(y)$ is zero or not, which implies that the singular set is either an isolated 
point or a smooth curve of positive length. In addition, the singular set as an isolated point happens if and only if $\zeta_{1}$ is a constant, that is, the surface is part of a plane. We thus complete the proof of this theorem \ref{strofsiset}.

\subsection{The proof of Theorem \ref{theo2}}
Around the singular point $p$, we may assume that the surface is represented by a graph $z=u(x,y)$. Let $X$ be a parametrization of the $p$-minimal surface around $p$ defined by
$X(x,y)=(x,y,u(x,y))$. Then 
\begin{equation}\label{equ512}
\begin{split}
X_{x}&=(1,0,u_{x})=\frac{\partial}{\partial x}+u_{x}\frac{\partial}{\partial t}=\mathring{e}_{1}+(u_{x}-y)\frac{\partial}{\partial t};\\
X_{y}&=(0,1,u_{y})=\frac{\partial}{\partial y}+u_{y}\frac{\partial}{\partial t}=\mathring{e}_{2}+(u_{y}+x)\frac{\partial}{\partial t},
\end{split}
\end{equation}
which yields
\[I(X_{x}, X_{x})=1+(u_{x}-y)^{2},\ \ I(X_{y}, X_{y})=1+(u_{y}+x)^{2},\ \ I(X_{x}, X_{y})=(u_{x}-y)(u_{y}+x),\]
where $I$ is the induced metric (first fundamental form) on the surface. Now we choose a horizontal normal as follows
\[e_{2}=-\frac{(u_{x}-y)\mathring{e}_{1}+(u_{y}+x)\mathring{e}_{2}}{D},\]
where $D=\left((u_{x}-y)^{2}+(u_{y}+x)^{2}\right)^{1/2}$. Then
\begin{equation}\label{fore1}
e_{1}=\frac{(u_{y}+x)\mathring{e}_{1}-(u_{x}-y)\mathring{e}_{2}}{D}
\end{equation}
is tangent to the characteristic curves. 

We first claim that either $u_{xx}(p)\neq 0$ or $(u_{xy}+1)(p)\neq 0$. Let $f(x,y)=u_{x}-y$ and let $(x(s),y(s))$ be a parametrization of the singular curve passing through $p$. Notice that we may assume, w.l.o.g., that the $x$-axis past $p$ is transverse to the singular curve, i.e., $y'\neq 0$. Since $f(x(s),y(s))=0$,
taking derivative with respect to $s$ gives $u_{xx}x'+(u_{xy}-1)y'=0$. Therefore, $(u_{xy}-1)(p)=0$ if $u_{xx}(p)=0$, and hence $(u_{xy}+1)(p)=2$.

If $u_{xx}(p)\neq 0$, we turn to compute the angle $\zeta$ between $e_{1}$ and $X_{x}$. First, from (\ref{equ512}), we have
\[I(e_{1},X_{x})=|e_{1}||X_{x}|\cos{\zeta}=(1+(u_{x}-y)^{2})^{1/2}\cos{\zeta}.\]
On the other hand, using (\ref{fore1}) to get
\[I(e_{1},X_{x})=\frac{(u_{y}+x)}{D}.\]
Combining the above two formulae, we obtain
\begin{equation}\label{equ513}
\begin{split}
\cos{\zeta}&=\frac{u_{y}+x}{D\sqrt{1+(u_{x}-y)^{2}}}=\frac{\frac{u_{y}+x}{u_{x}-y}}{\frac{D}{u_{x}-y}\sqrt{1+(u_{x}-y)^{2}}}\\
&=\pm\frac{\frac{u_{y}+x}{u_{x}-y}}{\sqrt{1+(\frac{u_{y}+x}{u_{x}-y})^{2}}\sqrt{1+(u_{x}-y)^{2}}},
\end{split}
\end{equation}
where the sign $\pm$ depends on that the sign of $u_{x}-y$ is positive or not. By the mean value theorem, it is easy to see (or see \cite{CHMY}) that
\begin{equation}\label{equ517}
\lim_{q\rightarrow p^{+}}\frac{u_{y}+x}{u_{x}-y}=\frac{u_{xy}+1}{u_{xx}}(p)=\lim_{q\rightarrow p^{-}}\frac{u_{y}+x}{u_{x}-y},
\end{equation}
and thus
\begin{equation}\label{equ514}
\lim_{q\rightarrow p^{+}}\cos{\zeta}=\frac{\frac{u_{xy}+1}{u_{xx}}(p)}{\sqrt{1+\left(\frac{u_{xy}+1}{u_{xx}}(p)\right)^{2}}}=-\lim_{q\rightarrow p^{-}}\cos{\zeta},
\end{equation}
where $\lim_{q\rightarrow p^{+}} (\lim_{q\rightarrow p^{-}})$ means that $q\rightarrow p$ from the side in which $u_{x}-y$ is positive (negative).

If $(u_{xy}+1)(p)\neq 0$, similar computations give the angle $\eta$ between $e_{1}$ and $X_{y}$ by
\begin{equation}\label{equ515}
\cos{\eta}=\frac{-(\frac{u_{x}-y}{u_{y}+x})}{\pm\sqrt{1+(\frac{u_{x}-y}{u_{y}+x})^{2}}\sqrt{1+(u_{y}+x)^{2}}},
\end{equation}
thus
\begin{equation}\label{equ516}
\lim_{q\rightarrow p^{+}}\cos{\eta}=-\frac{\frac{u_{xx}}{u_{xy}+1}(p)}{\sqrt{1+\left(\frac{u_{xx}}{u_{xy}+1}(p)\right)^{2}}}=-\lim_{q\rightarrow p^{-}}\cos{\eta},
\end{equation}
where $\lim_{q\rightarrow p^{+}} (\lim_{q\rightarrow p^{-}})$ means that $q\rightarrow p$ from the side in which $u_{y}+x$ is positive (negative).

From \eqref{equ517}, it is easy to see that  both $u_{x}-y$ and $u_{y}+x$ differ by a sign on the different side of the singular curve which is defined by $u_{x}-y=0$ and $u_{y}+x=0$. Therefore, from the formula of $e_{1}$ (see \eqref{fore1}), together with (\ref{equ514}) and (\ref{equ516}), we conclude that the characteristic vector field $e_{1}$ differs by a sign on the different side of the singular curve when approaching the singular point $p$. This completes the proof of theorem \ref{theo2}.

\subsection{The proof of Theorem \ref{main10}}
In terms of normal coordinates $(x,y)$, the surface $\Sigma$ is represented by two functions $\zeta_{1}(y)$ and $\zeta_{2}(y)$. Since it is of type II, we have $\zeta_{2}(y)<0$ and 
\begin{equation}
\alpha=\frac{x+\zeta_{1}(y)}{(x+\zeta_{1}(y))^{2}+\zeta_{2}(y)},\ a=0,\ b=\frac{|\alpha|}{|x+\zeta_{1}(y)|\sqrt{1+\alpha^{2}}},
\end{equation}
on which either $x+\zeta_{1}(y)>\sqrt{-\zeta_{2}(y)}$ or $x+\zeta_{1}(y)<-\sqrt{-\zeta_{2}(y)}$. The induced metric is 
\begin{equation}\label{641}
I=dx\otimes dx+\frac{1}{b^{2}}dy\otimes dy.
\end{equation}
We assume that $\Sigma$ lies on the part $x+\zeta_{1}(y)>\sqrt{-\zeta_{2}(y)}$ (the proof for the case that $\Sigma$ lies on the part $x+\zeta_{1}(y)<-\sqrt{-\zeta_{2}(y)}$ is similar). Suppose, in addition, that $\Sigma$ can be smoothly extended beyond the singular curve $x+\zeta_{1}(y)-\sqrt{-\zeta_{2}(y)}=0$. By theorem \ref{theo2}, the coordinates $(x,y)$ can be extended beyond the singular curve to be compatible coordinates. Then the $\alpha$-function on the other side of the singular curve must be one of the following
\begin{enumerate}
\item $\frac{1}{x+\zeta_{1}(y)-\sqrt{-\zeta_{2}(y)}}$, which is of special type I;
\item $\frac{1}{2x+2(\zeta_{1}(y)-\sqrt{-\zeta_{2}(y)})}$, which is of special type II;
\item $\alpha=\frac{x+\zeta_{1}(y)}{(x+\zeta_{1}(y))^{2}+\zeta_{2}(y)}$, which is of general type,
\end{enumerate}
for $x+\zeta_{1}(y)<\sqrt{-\zeta_{2}(y)}$. The induced metric on this other part is 
\begin{equation}\label{642}
I=dx\otimes dx-\frac{a}{b}dx\otimes dy-\frac{a}{b}dy\otimes dx+\frac{(1+a^{2})}{b^{2}}dy\otimes dy.
\end{equation}
Comparing \eqref{641} and \eqref{642}, and noting that $I$ is smooth around the singular curve, we have
\[a=0,\ b=\frac{|\alpha|}{|x+\zeta_{1}(y)|\sqrt{1+\alpha^{2}}},\]
with $\alpha=\frac{x+\zeta_{1}(y)}{(x+\zeta_{1}(y))^{2}+\zeta_{2}(y)}$. That is, cases (1) and (2) for $\alpha$ do not happen. Therefore, the extended coordinates beyond the singular curve are also normal coordinates. The formula of $\alpha$ shows that the part on the other side of the singular curve is of type III. This completes the proof of Theorem \ref{main10}.

\subsection{The proof of the Bernstein-type theorem}

In this subsection, we will show that \eqref{pmin1} and \eqref{pmin2} are the only entire smooth $p$-minimal graphs. Suppose that $\Sigma\subset H_{1}$ is an entire $p$-minimal graph. First of all, since it is a graph, we notice that there is nowhere at which $\alpha$ is zero. Next, we claim the following lemma.
\begin{lem}\label{nclogt}
The induced singular characteristic foliation of $\Sigma$ does not contain a leaf along which $\alpha$ is of general type, that is, in terms of normal coordinates around the leaf, $\alpha$ is a general solution of the {\bf Codazzi-like} equation $($see \eqref{lieq}$)$. 
 \end{lem}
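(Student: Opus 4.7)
The plan is to argue by contradiction. I would assume that some characteristic leaf $L$ of $\Sigma$ has, in a normal coordinate system $(x,y)$ with $L=\{y=y_0\}$, an $\alpha$-function of the general form
\[
\alpha(x,y_0)=\frac{x+c_1}{(x+c_1)^2+c_2},\qquad c_i=\zeta_i(y_0),\ c_2\ne0,
\]
and then derive a contradiction from the hypothesis that $\Sigma$ is an entire graph $z=u(x,y)$.

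The first lever I want to pull is the elementary fact that $T=\partial_z$ is never tangent to a smooth graph $z=u(x,y)$, since $T_p\Sigma$ is spanned by $(1,0,u_x)$ and $(0,1,u_y)$, and $(0,0,1)$ is not in that span. Because $\hat e_2=(\alpha e_2+T)/\sqrt{1+\alpha^2}$ at every regular point, this forces $T\in T_p\Sigma$ if and only if $\alpha(p)=0$. Hence on an entire graph $\alpha$ never vanishes at a regular point of $\Sigma$ — a strong constraint I intend to exploit throughout.

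Next I would split into the three subtypes of general type and dispose of each in turn. For type I ($c_2>0$), the value $x=-c_1$ lies in the domain of $L$ and is a regular point (the denominator $c_2$ is positive), yet $\alpha(-c_1,y_0)=0$: immediate contradiction. For type III ($c_2<0$ with $|x+c_1|<\sqrt{-c_2}$), the point $x=-c_1$ again lies inside the defining interval and is regular, so once more $\alpha(-c_1,y_0)=0$, the same contradiction. For type II ($c_2<0$ with $|x+c_1|>\sqrt{-c_2}$), however, $\alpha$ does not vanish on $L$; instead $\alpha\to\pm\infty$ as $x$ approaches the endpoint $-c_1\pm\sqrt{-c_2}$, so $L$ terminates at a singular point of $\Sigma$. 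Because $\Sigma$ is entire and smooth, it is smooth across that singular set, so Theorem~\ref{theo2} extends $L$ smoothly to a characteristic leaf $L'$ on the other side of the singular curve, and Theorem~\ref{main10} identifies that other side as being of type III with the same invariants $\zeta_1,\zeta_2$. Applying the type III analysis to $L'$ completes the reduction.

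The main obstacle I anticipate is the type II case, which is the only one whose contradiction is indirect: I have to run $L$ into the singular curve first, then invoke both the go-through theorem and the II$\leftrightarrow$III swap in order to fall back on the type III case. Verifying that being an entire smooth graph supplies precisely the "smoothly extended through the singular curve" hypothesis required by Theorem~\ref{main10}, and that the characteristic parameter truly persists as a normal coordinate across the singular curve, are the subtle points; the other two subtypes are essentially immediate from the single observation that $T$ is never tangent to a graph.
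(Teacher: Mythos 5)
Your proposal is correct and follows essentially the same route as the paper: the observation that $\alpha$ never vanishes on a graph (since $T=\partial_z$ is never tangent to a graph), the immediate contradiction for types I and III from the zero of $\alpha$ at $x=-c_1$ (the paper phrases this as extending the leaf via entireness and the phase plane until $\alpha$ vanishes), and the reduction of type II to type III by passing through the singular curve via Theorems \ref{theo2} and \ref{main10}. Your explicit justification that $\alpha\neq 0$ on a graph, which the paper only asserts, is a welcome addition, but the overall argument is the same.
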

 \begin{proof}
Suppose not. We assume that the induced singular characteristic foliation of $\Sigma$ contains such a leaf. Then there will be a piece of the surface (a neighborhood) around the leaf such that this piece is of general type. Suppose that  this piece is of type I or of type III, then the entireness and the phase plane (Figure \ref{dfield}) indicate that the $\alpha$-function must be extended so that it has a zero somewhere. This is a contradiction. Therefore this piece (of general type) must be of type II. Again, since it is entire, this piece can be smoothly extended through the singular curve.  By Theorem \ref{main10}, it contains a piece of type III, which lies on the other side of the singular curve. This is also a contradiction, as we argue above. We hence complete the proof of Lemma \ref{nclogt}.
 \end{proof}
 From Lemma \ref{nclogt}, we know that an entire $p$-minimal graph is either of special type I or of special type II. If it is of special type II, Theorem \ref{main09} and Lemma \ref{le02} ensure that $\Sigma$ is one of the graphs in \eqref{pmin2}. If it contains a piece of special type I, then this piece must be symmetric by Theorem \ref{main08}. Therefore, by Theorem \ref{main07}  and Lemma \ref{le01}, the surface $\Sigma$ must be one of the graphs in \eqref{pmin1}. We hence complete the proof of the Bernstein-type theorem. We also remark that the Bernstein-type theorem still holds for $C^3$ surfaces in $H_1$.
 
\begin{rem}
 We point out that in \cite{CHMY}, the Bernstein-type theorem had been proved in $C^{2}$-graphs. 
\end{rem}

 \section{An approach to construct $p$-minimal surfaces}\label{appcon}
 
In this section, we provide an approach to constructing $p$-minimal surfaces. It turns out that, in some sense, generic $p$-minimal surfaces can be constructed by this approach, particularly, other than those $p$-minimal surfaces of special type I. This approach is to perturb the surface $u=0$ in some way. Recall we choose the parametrization of $u=0$ by
\[X: (r,\theta) \rightarrow (r\cos{\theta},r\sin{\theta}, 0),\ \ r>0,\]
where each half-ray $l_{\theta}:r\rightarrow(r\cos{\theta},r\sin{\theta}, 0)$ with a fixed angle $\theta$ is a Legendrian straight line. Therefore, the image of the action of each Heisenberg rigid motion on $l_{\theta}$ is also a Legendrian straight line. Let $\mathcal{C}$ be an arbitrary curve $\mathcal{C}:\theta\rightarrow(x(\theta),y(\theta),z(\theta)),\ \theta\in\R$. Then for each fixed $\theta$ and $r>0$, the curve defined by
 \[L_{\mathcal{C}(\theta)}(l_{\theta}):r\rightarrow(x(\theta)+r\cos{\theta},y(\theta)+r\sin{\theta},z(\theta)+ry(\theta)\cos{\theta}-rx(\theta)\sin{\theta})\]
 is a Legendrian straight line. Here $L_{\mathcal{C}(\theta)}$ is the left translation by $\mathcal{C}(\theta)$. Therefore, the union of these lines constitutes a $p$-minimal surface with a parametrization $Y$ given by
 \begin{equation}
 Y(r,\theta)=(x(\theta)+r\cos{\theta},y(\theta)+r\sin{\theta},z(\theta)+ry(\theta)\cos{\theta}-rx(\theta)\sin{\theta}).
 \end{equation}
This surface depends on the curve $\mathcal{C}(\theta)=(x(\theta),y(\theta),z(\theta))$. We have the following proposition about the surface $Y$.

\begin{prop}
The coordinates $(r,\theta)$ are compatible coordinates for $Y$. In terms of this coordinate system, the $\alpha$-invariant and the induced metric read
\begin{equation}\label{705}
\begin{split}
a&=\frac{-(x'(\theta)\cos{\theta}+y'(\theta)\sin{\theta})\alpha}{[r+(y'(\theta)\cos{\theta}-x'(\theta)\sin{\theta})]\sqrt{1+\alpha^2}}\\
&\\
b&=\frac{\alpha}{[r+(y'(\theta)\cos{\theta}-x'(\theta)\sin{\theta})]\sqrt{1+\alpha^2}},
\end{split}
\end{equation}
and 
\begin{equation}\label{706}
\alpha=\frac{r+(y'(\theta)\cos{\theta}-x'(\theta)\sin{\theta})}{[r+(y'(\theta)\cos{\theta}-x'(\theta)\sin{\theta})]^{2}+\Theta(\mathcal{C}'(\theta))-(y'(\theta)\cos{\theta}-x'(\theta)\sin{\theta})^2},
\end{equation}
where $\Theta(\mathcal{C}'(\theta))=z'(\theta)+x(\theta)y'(\theta)-y(\theta)x'(\theta)$.
\end{prop}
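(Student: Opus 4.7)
The plan is to compute $Y_r$ and $Y_\theta$ directly, convert them to the adapted frame $\{\mathring{e}_1,\mathring{e}_2,T\}$ using $\partial/\partial x = \mathring{e}_1 - yT$, $\partial/\partial y = \mathring{e}_2 + xT$, $\partial/\partial z = T$, and then read off $\alpha$, $a$, $b$ from the identity $\hat{e}_2 = aY_r + bY_\theta$.

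First I would compute
\[
Y_r=\cos\theta\,\frac{\partial}{\partial x}+\sin\theta\,\frac{\partial}{\partial y}+\bigl(y(\theta)\cos\theta-x(\theta)\sin\theta\bigr)T.
\]
Rewriting in terms of $\mathring{e}_1,\mathring{e}_2,T$ at the point $Y(r,\theta)$ and using $X_1=x(\theta)+r\cos\theta$, $X_2=y(\theta)+r\sin\theta$, the coefficient of $T$ collapses to zero, leaving $Y_r=\cos\theta\,\mathring{e}_1+\sin\theta\,\mathring{e}_2$. Hence $Y_r$ is a horizontal unit vector tangent to $\Sigma$; this identifies it as $\pm e_1$ and shows that $(r,\theta)$ is a compatible coordinate system with $\hat e_1=Y_r$. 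As a byproduct $e_2=Je_1=-\sin\theta\,\mathring{e}_1+\cos\theta\,\mathring{e}_2$.

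Next I would compute $Y_\theta$ and decompose it with respect to $\{e_1,e_2,T\}$. Set $A=y'(\theta)\cos\theta-x'(\theta)\sin\theta$ and $B=x'(\theta)\cos\theta+y'(\theta)\sin\theta$. A direct expansion of the horizontal part gives coefficients $B$ and $A+r$ along $e_1$ and $e_2$ respectively, while the $T$-coefficient, after combining $-(x'-r\sin\theta)X_2+(y'+r\cos\theta)X_1$ with the explicit $\partial_\theta$ of the third slot of $Y$, simplifies to $\Theta(\mathcal{C}'(\theta))+2rA+r^2$. Thus
\[
Y_\theta=B\,e_1+(A+r)\,e_2+\bigl(\Theta(\mathcal{C}'(\theta))+2rA+r^2\bigr)T.
\]
Observe that this $T$-coefficient equals $(r+A)^2+\Theta(\mathcal{C}'(\theta))-A^2$.

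Finally, inserting $\hat e_2=\frac{\alpha e_2+T}{\sqrt{1+\alpha^2}}$ into $\hat e_2=a Y_r+b Y_\theta=(a+bB)e_1+b(A+r)e_2+b\bigl(\Theta(\mathcal{C}'(\theta))+2rA+r^2\bigr)T$ and matching the three components yields $a+bB=0$, $b(A+r)=\alpha/\sqrt{1+\alpha^2}$, and $b\bigl((r+A)^2+\Theta(\mathcal{C}'(\theta))-A^2\bigr)=1/\sqrt{1+\alpha^2}$. Taking the ratio of the last two gives the claimed formula \eqref{706} for $\alpha$; the second then gives $b=\alpha/[(r+A)\sqrt{1+\alpha^2}]$ and the first gives $a=-bB$, reproducing \eqref{705}.

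The only real obstacle is the bookkeeping in the $T$-coefficient of $Y_\theta$, where several cross terms in $r$ must cancel to leave exactly $\Theta(\mathcal{C}'(\theta))+2rA+r^2$; once that simplification is done the remaining argument is just a $3\times 3$ linear system in $a,b,\alpha$.
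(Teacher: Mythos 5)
Your proposal is correct and follows essentially the same route as the paper: identify $Y_r$ as the horizontal unit tangent $e_1$ (hence $(r,\theta)$ compatible), decompose $Y_\theta$ into horizontal and $T$ parts, and solve the $3\times 3$ linear system coming from $\frac{\alpha e_2+T}{\sqrt{1+\alpha^2}}=aY_r+bY_\theta$. The only cosmetic difference is that you express the horizontal part of $Y_\theta$ directly in the rotated frame $\{e_1,e_2\}$ while the paper uses $\{\mathring{e}_1,\mathring{e}_2\}$ before comparing coefficients; the computations agree.
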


\begin{proof}
We make a straightforward computation for the invariants $\alpha,a$ and $b$. Firstly, we have
\begin{equation}\label{701}
\begin{split}
Y_{r}&=(\cos{\theta},\sin{\theta},y(\theta)\cos{\theta}-x(\theta)\sin{\theta})\\
&=\cos{\theta}\ \mathring{e}_{1}(Y(r,\theta))+\sin{\theta}\ \mathring{e}_{2}(Y(r,\theta)).
\end{split}
\end{equation} 
From the construction of $Y$, we have $e_{1}=Y_{r}$. Thus 
\begin{equation}\label{702}
e_{2}=Je_{1}=-\sin{\theta}\ \mathring{e}_{1}(Y(r,\theta))+\cos{\theta}\ \mathring{e}_{2}(Y(r,\theta)),
\end{equation}
whereas we have
\begin{equation*}
Y_{\theta}=(x'(\theta)-r\sin{\theta},y'(\theta)+r\cos{\theta},z'(\theta)+r(y'(\theta)\cos{\theta}-y(\theta)\sin{\theta}-x'(\theta)\sin{\theta}-x(\theta)\cos{\theta})).
\end{equation*}
If we let
\begin{equation}\label{703}
Y_{\theta}=A\ \mathring{e}_{1}(Y(r,\theta))+B\ \mathring{e}_{2}(Y(r,\theta))+C\ \frac{\partial}{\partial z},
\end{equation} 
for some functions $A,B$ and $C$. Then straightforward computations show that
\begin{equation}\label{7031}
\begin{split}
A&=x'(\theta)-r\sin{\theta},\ \ B=y'(\theta)+r\cos{\theta},\\
C&=z'(\theta)-x'(\theta)y(\theta)+y'(\theta)x(\theta)+2r(y'(\theta)\cos{\theta}-x'(\theta)\sin{\theta})+r^2.
\end{split}
\end{equation}
We recall that the three invariants $\alpha,a$ and $b$ are related by
\begin{equation}\label{704}
\frac{\alpha e_{2}+T}{\sqrt{1+\alpha^2}}=aY_{r}+bY_{\theta}.
\end{equation}
If we substitute \eqref{701}, \eqref{702} and \eqref{703} into \eqref{704}, and compare the corresponding coefficients, we then obtain \eqref{705} and \eqref{706}. 
\end{proof}

\begin{rem}\label{re92}
Let $D=y'(\theta)\cos{\theta}-x'(\theta)\sin{\theta}$. By \eqref{701},\eqref{703} and \eqref{7031}, we have
\begin{equation*}
\begin{split}
0=Y_{r}\wedge Y_{\theta}&\Leftrightarrow B\cos{\theta}-A\sin{\theta}=0,\ C\cos{\theta}=0,\ C\sin{\theta}=0\\
&\Leftrightarrow r+D=0,\ C=0\\
&\Leftrightarrow r+D=0,\ \Theta(\mathcal{C}'(\theta))+2rD+r^{2}=0,\ \ \textrm{by}\ \eqref{7031},\\
&\Leftrightarrow r+D=0,\ r=-D\pm\sqrt{D^{2}-\Theta(\mathcal{C}'(\theta))}\\
&\Leftrightarrow r+D=0,\ \Theta(\mathcal{C}'(\theta))-D^{2}=0.
\end{split}
\end{equation*}
\end{rem}
We conclude that $Y$ is {\bf an immersion} if and only if either $\Theta(\mathcal{C}'(\theta))-D^{2}\neq 0\ \textrm{or}\ r+D\neq 0$ for all $\theta$, where $\Theta(\mathcal{C}'(\theta))=z'(\theta)-x'(\theta)y(\theta)+y'(\theta)x(\theta)$. 

 Formula \eqref{706} suggests the following: That $Y$ defines a $p$-minimal surface of special type depends on whether $\Theta(\mathcal{C}'(\theta))-(y'(\theta)\cos{\theta}-x'(\theta)\sin{\theta})^2$ vanishes or not.

Comparing equations \eqref{713} and \eqref{714}, it is convenient to regard surfaces of special type I as surfaces of general type with $\zeta_{2}$-invariant vanishing. Now given two arbitrary functions $\zeta_{1}$ and $\zeta_{2}$, we solve equation system  \eqref{714} for a smooth curve $\mathcal{C}(\theta)=(x(\theta),y(\theta),z(\theta))$. Since system \eqref{714} is equivalent to the following system
\begin{equation}\label{716}
\left\{\begin{split}
\zeta'_{1}(\theta)&=y''(\theta)\cos{\theta}-x''(\theta)\sin{\theta}-2\big(x'(\theta)\cos{\theta}+y'(\theta)\sin{\theta}\big),\\
\zeta_{2}(\theta)&=z'(\theta)+x(\theta)y'(\theta)-y(\theta)x'(\theta)-(y'(\theta)\cos{\theta}-x'(\theta)\sin{\theta})^2,
\end{split}\right.
\end{equation}
which is underdetermined. Therefore, the solutions always exist. For example, we can solve the first equation of \eqref{716} for $(x(\theta),y(\theta))$ and then solve for $z(\theta)$ from the second one. It turns out that we can find a smooth curve $\mathcal{C}$ such that the corresponding $p$-minimal surface $Y$ has the two given functions $\zeta_{1}$ and $\zeta_{2}$ as its $\zeta_{1}$-and $\zeta_{2}$-invariants. If $\zeta_{2}=0$, then $Y$ is of special type I.
We thus conclude, together with Theorem \ref{main09} in which states parametrizations for $p$-minimal surfaces of special type II, that we generically have provided a parametrization  
for any given $p$-minimal surface with type. In particular, we give a parametrization presentation for the eight classes of maximal $p$-minimal surfaces constructed in Subsection \ref{mami}.

Finally, we point out that these $p$-minimal surfaces constructed by curves defined by \eqref{707} and \eqref{7071} are all immersed surfaces at least (in some cases, they are embedded). This is because that $\tilde{b}\neq 0$ for all points. In particular, formula \eqref{714} says that if $\tilde{\alpha}\rightarrow 0$ then \[\tilde{b}\rightarrow\frac{1}{|\Theta(\mathcal{C}'(\theta))-(y'(\theta)\cos{\theta}-x'(\theta)\sin{\theta})^2|},\]
which is not zero. 

\begin{exa}
If we take $\mathcal{C}$ to be the curve $\mathcal{C}(\theta)=(0,0,z(\theta))$ with $z'(\theta)\neq 0$, then 
\[Y(r,\theta)=(r\cos{\theta},r\sin{\theta},z(\theta)).\]
Taking the new coordinates $(s,t)=(r,z(\theta))$, we recover the surface of general type in Subsection \ref{exagt} (see Figure \ref{fig5} for the case $z(\theta)=\theta$). 
\end{exa}

\begin{exa}
If we take $\mathcal{C}$ to be the curve $\mathcal{C}(\theta)=(-\sin{\theta},\cos{\theta},\theta)$, then 
\[Y(r,\theta)=(-\sin{\theta}+r\cos{\theta},\cos{\theta}+r\sin{\theta},r).\]
The surface of type I in Subsection \ref{exat1} (see Figure \ref{fig6}) can be recovered by taking a rotation by $\frac{\pi}{2}$ about the $z$-axis. 
\end{exa}

\bibliographystyle{plain}

\end{document}